\documentclass[11pt]{article}

\usepackage{fourier} 
\usepackage{soul} 
\usepackage{enumerate} 

\usepackage{amsfonts,amssymb,amsmath,amsthm}
\usepackage{bbm}

\usepackage{xcolor}
\DeclareMathOperator*{\argmax}{argmax}

\numberwithin{equation}{section}

\def\11{\mathbbm{1}}
\def\E{\mathbb{E}}
\def\P{\mathbb{P}}
\def\R{\mathbb{R}}

\def\N{\mathbb{N}}

\def\d{\partial}
\def\Z{\mathbb{Z}}

\def\cM{{\cal M}}
\def\cP{{\cal P}}
\def\cW{{\cal W}}
\def\cL{{\cal L}}

\def\bard{d}

\newtheorem{thm}{Theorem}[section]
\newtheorem{lem}[thm]{Lemma}
\newtheorem{cor}[thm]{Corollary}

\newtheorem{prop}[thm]{Proposition}

\theoremstyle{remark}
\newtheorem{rem}{Remark}
\newtheorem{exa}{Example}



\begin{document}

\title{Lower bound for the coarse Ricci curvature of continuous-time pure jump processes}

\author{Denis Villemonais$^{1,2}$}

\footnotetext[1]{Universit\'e de Lorraine, IECN, Campus Scientifique, B.P. 70239,
  Vand{\oe}uvre-l\`es-Nancy Cedex, F-54506, France} \footnotetext[2]{Inria, TOSCA team,
  Villers-l\`es-Nancy, F-54600, France.\\
  E-mail: Denis.Villemonais@univ-lorraine.fr}

\maketitle

\begin{abstract}
  We obtain a lower bound for the coarse Ricci curvature of continuous
  time pure jump Markov processes, with an emphasis on interacting
  particle systems. Applications to several models are provided, with
  a detailed study of the herd behavior of a simple model of
  interacting agents.
\end{abstract}

\section{Introduction}
\label{sec:intro}

Let $(E,d)$ be a Polish space. Fix $N\geq 1$ and consider a continuous
time pure jump particle system of $N$ particles
$(\bar{X}_t)_{t\geq 0}=(X^1_t,\ldots,X^N_t)_{t\geq 0}$ evolving in
$E^N$.  We assume that the process is non-explosive and that its
infinitesimal generator $\cL$ is given, for all
$\bar{x}=(x_1,\ldots,x_N)\in E^N$ and any bounded measurable function
$f:E^N\rightarrow\R$, by
\begin{align*}
  \cL{\color{black} f(\bar x)}=\sum_{i=1}^N \int_E \left(f(x_1,\ldots,x_{i-1},y,x_{i+1},\ldots,x_N)-f(x_1,\ldots,x_n)\right)\,F_i\left(x_i,\bar{x},dy\right),
\end{align*}
where the terms $F_i(x_i,\bar{x},\cdot)$ are finite non-negative
measures on $E$, measurable with respect to $x_i$ and $\bar{x}$ and
such that, for some (and hence for all) $\bar{x}\in E^N$,
$\int d(x_i,y)F_i(x_i,\bar{x},dy)<\infty$.  Our main result, stated in
Section~\ref{sec:main-result}, provides a lower bound for the coarse
Ricci curvature of $\bar{X}$ evolving in $E^N$ endowed with the metric
\begin{align*}
  \bard(\bar{x},\bar{y})=\frac{1}{N}\sum_{i=1}^N d(x_i,y_i),\ \forall \bar{x}=(x_1,\ldots,x_N),\ \bar{y}=(y_1,\ldots,y_N)\in E^N.
\end{align*} 
We recall that the coarse Ricci curvature of the conti\-nuous-time
Markov process $\bar{X}$ is the largest constant $\sigma$ satisfying,
for all $t\geq 0$,
\begin{align*}
  \cW_\bard\left(\P(\bar{X}_t\in\cdot\mid \bar{X}_0=\bar{x}),\P(\bar{X}_t\in\cdot\mid \bar{X}_0=\bar{y})\right)\leq e^{-\sigma t}\,\bard(\bar{x},\bar{y}),\ \forall \bar{x},\bar{y}\in E^N,
\end{align*}
where $\cW_\bard$ denotes the Wasserstein distance (see
Section~\ref{sec:main-result} for references and details).  A lower
bound on $\sigma$ provides a measure of the instantaneous convergence
rate to a unique stationary distribution (see for
instance~\cite{Chen1994}). It also entails spectral gap inequalities
and concentration inequalities
(see~\cite{Ollivier2009,JoulinOllivier2010,Joulin2007,
  Joulin2009,Veysseire2012,Veysseire2012a,DjelloutGuillinEtAl2004}).

\medskip In Section~\ref{sec:N-equal-1}, as a first application of our
main result, we provide a general lower bound for the coarse Ricci
curvature of simple ($N=1$) continuous time pure jump processes. The
time-continuous version of the coarse Ricci curvature has often been
considered not practical because of the lack of general and practical
lower bounds, see~\cite{Ollivier2009} and~\cite{Chen2010}, contrarily
to the discrete time case. The computation of our lower bound mainly
requires the computation of a Wasserstein distance between measures,
similarly to the discrete time case. We refer the reader
to~\cite{AlfonsiCorbettaEtAl2016} for a different approach
based on Kantorovich potentials. In Example~\ref{exa:tv-N-equal1}, we
consider the case where $d$ is the {\color{black}trivial} distance and where the
jump measures admit a density with respect to a common non-negative
measure on $E$.  In Example~\ref{exa:bd-proc-1d}, we check that the
lower bound provided by our main result in the case of birth and death
processes is in fact equal to the coarse Ricci curvature, as computed
explicitly in~\cite{Chen1994,Joulin2009}. This entails that, at least
in some simple cases, this lower bound is sharp. We also show in
Example~\ref{exa:bd-proc-1d-modified} how to compute non trivial lower
bounds for the coarse Ricci curvature of a modified birth and death
process, using our result and a slight extension of Vallender's
Theorem~\cite{Vallender1973} for the computation of the Wasserstein
distance between probability measures on the real line (see
Lemma~\ref{lem:vallender}).

\medskip In Section~\ref{sec:inter-agents}, we study a simple model of
interacting agents whose individual behavior is influenced in a
non-linear way by the behavior of the other agents: each agent wanders
randomly in a complete graph and also changes its position to a new
one, depending on a function of the number of agents in this
position. This dynamic is modeled by a system of $N$ particles
evolving in the complete finite graph $E$ of size $\# E\geq 2$: we
assume that there exist $T>0$ and a function $f:[0,1]\rightarrow\R_+$
such that any agent jumps from state $x$ to $y\in E$ with the
following rate
\begin{align}
\label{eq:intro-alpha}
x\rightarrow y\text{ with rate }\frac{T}{\# E}+f\left(\frac{\text{Number of agents in y}}{N}\right).
\end{align}
In this model, $T$ is the temperature of the system and $f$ is a
preference function. For instance, with an increasing function $f$
with high convexity, the agents will give higher preferences to
positions that are already favored by many other agents; with a larger
temperature $T$, the agents act more independently. Our aim is to
determine characteristics of $f$ and values of $T$ for which a herd
behavior occurs or not in this model. By a herd behavior, we mean a
meta-stable state of the whole particle system where a majority of the
agents share the same position for a long time. Note that this model
can be written in the settings of the present paper, by setting, for
all $x,y\in E$ and $\bar{x}\in E^N$,
\begin{align*}
F_i(x,\bar{x},\{y\})=\frac{T}{\# E}+f\left(\frac{\sum_{i=1}^N \11_{x_i=y}}{N}\right),\ \forall y\in E.
\end{align*}
The existence of the phase without herd behavior is obtained using the
results of Section~\ref{sec:main-result}, while the existence of the
phase with herd behavior is proved using large deviation results
obtained in~\cite{DupuisEllisEtAl1991,DupuisRamananEtAl2016}.

In Section~\ref{sec:other-models}, lower bounds of the coarse Ricci
curvature for several models are obtained: we consider zero range
dynamics in Subsection~\ref{sec:zero-range-dyn}, Fleming-Viot type
systems and some natural extensions in
Subsection~\ref{sec:Fleming-Viot}, birth and death processes in
mean-field type interaction in Subsection~\ref{sec:BD-mean-field} and
system of particles whose jump measures admit a density with respect
to the Lebesgue measure or the counting measure in
Subsection~\ref{sec:with-density}.

\section{Definitions and main result}
\label{sec:main-result}

\subsection{Definitions and reminders about the Wasserstein distance}
\label{sec:wass-remainder}
Fix $N\geq 1$ and consider the Polish space $(E^N,\bard)$. Let
$\cP_d(E^N)$ (respectively $\cM_d(E^N)$) denote the set of
probability measures (respectively of non-negative finite measures)
$\mu$ on $E^N$ such that, for some (and hence for all) $x\in E^N$,
$\int d(x,y)\mu(dy)<\infty$. The Wasserstein distance $\cW_d$ between
two probability measures $\mu$ and $\nu$ on $E^N$ belonging to
$\cP_d(E^N)$ is defined as
\begin{align}
\label{eq:wass-def}
\cW_d(\mu,\nu)=\inf_\pi \int_{E^N\times E^N} d(x,y)\,\pi(dx,dy),
\end{align}
where the infimum is taken over all probability measures $\pi$ on
$E^N\times E^N$ such that $\pi(\cdot,E^N)=\mu(\cdot)$ and
$\pi(E^N,\cdot)=\nu(\cdot)$ ($\pi$ is called a coupling measure for
$\mu$ and $\nu$). It is well known that the infimum in the above
definition is attained and the state space
$\left(\cP_d(E^N),\cW_d\right)$ is a complete state space (see for
instance Lemma~5.2 and Theorem~5.4 in~\cite{Chen2004}).  The
Wasserstein distance is also referred to as the Kantorovich metric
(which one may consider a more suitable name given the historical
precedence~\cite{Vershik2013}) and is a particular instance of the
Kantorovich-Rubistein norm (with $d$ replaced by a suitable cost
function and $\pi$ taken in the set of measures such that
$\pi(\cdot,E^N)-\pi(E^N,\cdot)=\mu(\cdot)-\nu(\cdot)$,
see~\cite[Chapter 6]{RachevKlebanovEtAl2013} for relations between the
different types of norms).

The Wasserstein distance can also be easily extended to positive measures with the same mass: for all $\alpha>0$ and any probability measures $\mu,\nu$ on $E^N$, we set
\begin{align}
\label{eq:Wd-alpha}
\cW_d(\alpha \mu,\alpha\nu)=\inf_\pi \int_{E^N\times E^N} d(x,y)\,\pi(dx,dy)=\alpha \cW_d(\mu,\nu),
\end{align}
where the infimum is  taken over all measures $\pi$ on $E^N\times E^N$ with mass $\alpha$ and such that $\pi(\cdot,E^N)=\mu(\cdot)$ and $\pi(E^N,\cdot)=\nu(\cdot)$.
Note that if a coupling $\pi$ realizes the minimum in the definition of $\cW_d(\mu,\nu)$, then $\alpha\pi$ realizes the minimum in the definition of $\cW_d(\alpha \mu,\alpha\nu)$. Such couplings are also referred to as optimal couplings.

Given a continuous time Markov process $(\bar{X}_t)_{t\geq 0}$ evolving in $E^N$, the coarse Ricci curvature of $\bar{X}$ (as coined by Ollivier~\cite{Ollivier2009}, see also~\cite{Joulin2007} and \cite[Remark 2.3]{Joulin2009} where this quantity is called \textit{the Wasserstein curvature})  is the largest constant $\sigma\in[-\infty,+\infty]$ satisfying, for all $t\geq 0$,
\begin{align*}
\cW_d\left(\P(\bar{X}_t\in\cdot\mid \bar{X}_0=\bar{x}),\P(\bar{X}_t\in\cdot\mid \bar{X}_0=\bar{y})\right)\leq e^{-\sigma t}d(\bar{x},\bar{y}),\ \forall \bar{x},\bar{y}\in E^N.
\end{align*}
In the discrete time setting,  we refer the reader to~\cite[Theorem~2.1 and Lemma~2.1]{Zhang1999} for a first use of this concept in a general setting and to~\cite{Ollivier2009} for a systematic study.
If $\sigma$ is positive, then the completeness of $\left(\cP_d(E^N),\cW_d\right)$ implies that the process admits a unique stationary distribution $\mu_\infty$, that $\mu_\infty\in \cP_d(E^N)$ and that, for all $t\geq 0$ and any initial distribution $\mu\in\cP_d(E^N)$,
\begin{align*}
\cW_d\left(\P_\mu(\bar{X}_t\in\cdot),\mu_\infty\right)\leq e^{-\sigma t}\cW_d(\mu,\mu_\infty).
\end{align*}
Note that this concept is closely related to the optimal coupling theory developed by Chen (see for instance~\cite{Chen1994,Chen2005}). Several implications of this notion have been proved in~\cite{Joulin2007,Joulin2009}, where Joulin obtains Poisson type deviation inequalities for jump type processes. We also refer the reader to~\cite[Section~3.2]{ChafaiJoulin2013} for a link between coarse Ricci curvature and functional inequalities. For general state space processes and for diffusion processes, we refer the reader to the works of Veysseire, where a systematic study of the coarse Ricci curvature has been conducted (see~\cite{Veysseire2012,Veysseire2012a}) with nice implications on concentration inequalities and spectral gap estimates. Let us also mention that estimates on the coarse Ricci curvature  of a continuous time process immediately provides estimates for the curvature of its discrete time included Markov chain, which also implies several interesting properties (see the works of Ollivier~\cite{Ollivier2009,Ollivier2010} and references therein).

Estimates on the coarse Ricci curvature can be obtained using the coupling of Markov processes. Let $\cL$ be the infinitesimal generator of $\bar{X}$. We recall (see \cite[Definition~5.12]{Chen2004}) that a coupling operator $\cL^c$ of $\cL$ is an operator acting on functions $f:E^N\times E^N:\rightarrow\R$ and such that
\begin{align*}
\cL^c f(\bar{x},\bar{y})&=\cL g(\bar{x})\quad\text{if }f(\bar{x},\bar{y})=g(\bar{x})\quad\forall \bar{x},\bar{y}\in E^N\\
\cL^c f(\bar{x},\bar{y})&=\cL g(\bar{y})\quad\text{if }f(\bar{x},\bar{y})=g(\bar{y})\quad\forall \bar{x},\bar{y}\in E^N
\end{align*}
for some function $g:E^N\rightarrow\R$. Since $\cL$ is the infinitesimal generator of a pure jump non-explosive process (see \cite[Chapter~2]{Chen2005}), any coupling operator $\cL^c$ is also non-explosive and $\cL^c d(\bar{x},\bar{y})$ is well defined for all $\bar{x},\bar{y}\in E^N$. A common way to prove that the coarse Ricci curvature of a pure jump Markov process is bounded {\color{black}from} below by a constant $c\in\R$ is to prove that there exists a coupling operator $\cL^c$ of $\cL$ such that
\begin{align*}
\cL^c d(\bar{x},\bar{y})\leq -c d(\bar{x},\bar{y}),\ \forall \bar{x},\bar{y}\in E^N.
\end{align*}
Indeed, standard localization arguments and Dynkin's formula entail that, for a Markov process $(\bar{Y},\bar{Z})$ with generator $\cL^c$ satisfying the above inequality,
\begin{align*}
\E(d(\bar{Y}_t,\bar{Z}_t) \mid \bar{Y}_0=\bar{y},\bar{Z}_0=\bar{z})\leq e^{-ct}d(\bar{y},\bar{z}),\ \forall \bar{y},\bar{z}\in E^N.
\end{align*}
Now, since the law of $\P((\bar{Y}_t,\bar{Z}_t)\in\cdot\mid \bar{Y}_0=\bar{y},\bar{Z}_0=\bar{z})$ is a coupling measure for $\P(\bar{X}_t\in\cdot\mid \bar{X}_0=\bar{y})$ and $\P(\bar{X}_t\in \cdot\mid \bar{X}_0=\bar{z})$, we deduce that, for all $\bar{y},\bar{z}\in E^N$,
\begin{align*}
\cW_d(\P(\bar{X}_t\in\cdot\mid \bar{X}_0=\bar{y}),\P(\bar{X}_t\in\cdot\mid \bar{X}_0=\bar{z}))\leq \E(d(\bar{Y}_t,\bar{Z}_t) \mid \bar{Y}_0=\bar{y},\bar{Z}_0=\bar{z})\leq e^{-ct}d(\bar{y},\bar{z}),
\end{align*}
and hence that the coarse Ricci curvature of $\bar{X}$ is bounded {\color{black} from} below by $c$.

\begin{rem}
\label{rem:coupling}
The above strategy also applies to Markov processes that are not of
pure jump types \textcolor{black}{and to cost functions $d$ that are
  not distance functions.} 
For diffusion processes, we refer the reader to~\cite{ChenLi1989} and
to~\cite[Corollary~1.4]{RenesseSturm2005} for necessary and sufficient
conditions in the case where the drift derives from a potential. We
also refer the reader to~\cite{Eberle2011,Eberle2016} with an
introduction to parallel coupling and the construction of \textit{ad
  hoc} distances on the state space. Computation of the coarse Ricci
curvature for diffusion processes on manifold has also been studied
by~Veysseire~\cite{Veysseire2012a}. For piecewise deterministic
processes, we refer the reader to~\cite[Lemma~5.2]{CloezHairer2015}
and~\cite[Theorem~2.3]{ChafaiMalrieuEtAl2010}. Original coupling
approaches are also provided
in~\cite{Majka2016,Majka2015,CattiauxGuillin2014}.
\end{rem}

\subsection{Main result}

We introduce the family of functions $(J_d^{x,y})_{x,y\in E}$ from $\cM_d(E)^2$ to $\R$, defined for all $m_1,m_2\in\cM_d(E)$ by
\begin{align*}
J_{d}^{x,y}(m_1,m_2)&=
 \cW_d(m_1+m_2(E)\delta_x, m_2+m_1(E)\delta_y)-(m_1(E)+m_2(E))d(x,y),
\end{align*}
\textcolor{black}{where $\delta_x$ denotes the Dirac measure at point $x$ and $m_2(E)\delta_x$ is the product of the scalar $m_2(E)$ by $\delta_x$.}
Note that the finite measures $m_1$ and $m_2$ can have different masses. Properties of $J_d^{x,y}$ are provided in Subsection~\ref{sec:Jdxy} and explicit computations of lower bounds for $J_d^{x,y}$ are provided in the subsequent sections.

The following theorem is the main result of this paper.  The particular case $N=1$ is detailed in Subsection~\ref{sec:N-equal-1} and applications to particle systems are provided in Sections~\ref{sec:inter-agents} and~\ref{sec:other-models}.

\medskip

\begin{thm}
\label{thm:main}
Consider the Markov process $\bar{X}$ with generator $\cL$ given in the introduction. 
Then there exists a coupling operator $\cL^c$ of $\cL$ such that, for all $\bar{x},\bar{y}\in E^N$,
\begin{align*}
\cL^c d(\bar{x},\bar{y})= \frac{1}{N}\sum_{i=1}^N J_{d}^{x_i,y_i}(F_i(x_i,\bar{x},\cdot),F_i(y_i,\bar{y},\cdot)).
\end{align*}
In particular, the coarse Ricci curvature $\sigma$ of the process $(\bar{X}_t)_{t\geq 0}$ satisfies
$$
\sigma\geq -\sup_{\bar{x},\bar{y}\in E^N}\frac{\frac{1}{N}\sum_{i=1}^N J_{d}^{x_i,y_i}(F_i(x_i,\bar{x},\cdot),F_i(y_i,\bar{y},\cdot))}{d(\bar{x},\bar{y})}.
$$
\end{thm}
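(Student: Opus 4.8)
The plan is to construct an explicit coupling operator $\cL^c$ of $\cL$ for which the displayed identity holds coordinate by coordinate, and then read off the curvature bound from the localization-and-Dynkin argument recalled above. Fix an ordered pair $(\bar x,\bar y)\in E^N\times E^N$ and, for each $i$, abbreviate $m_1^i=F_i(x_i,\bar x,\cdot)$, $m_2^i=F_i(y_i,\bar y,\cdot)$, $a_i=m_1^i(E)$, $b_i=m_2^i(E)$. The two finite measures $\mu_i:=m_1^i+b_i\delta_{x_i}$ and $\nu_i:=m_2^i+a_i\delta_{y_i}$ have the same mass $a_i+b_i$ and both lie in $\cM_d(E)$ because the $F_i$'s have finite $d$-moment by hypothesis; hence, by the existence of optimal couplings for equal-mass measures recalled after~\eqref{eq:Wd-alpha}, one may choose a coupling $\pi_i^{\bar x,\bar y}$ of $\mu_i$ and $\nu_i$ with $\int_{E\times E}d(u,v)\,\pi_i^{\bar x,\bar y}(d(u,v))=\cW_d(\mu_i,\nu_i)$. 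I would then set, for bounded measurable $f:E^N\times E^N\to\R$,
\[
  \cL^c f(\bar x,\bar y)=\sum_{i=1}^N\int_{E\times E}\Big(f(x_1,\dots,u,\dots,x_N;\,y_1,\dots,v,\dots,y_N)-f(\bar x,\bar y)\Big)\,\pi_i^{\bar x,\bar y}(d(u,v)),
\]
the $i$-th term replacing only the $i$-th coordinate of each configuration.

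The first thing to verify is that $\cL^c$ is indeed a coupling operator of $\cL$. If $f(\bar x,\bar y)=g(\bar x)$ depends only on the first configuration, the $i$-th integrand depends only on $u$, so integrating $v$ out and using the first marginal $\pi_i^{\bar x,\bar y}(\cdot,E)=\mu_i$ turns the $i$-th term into $\int_E(g(x_1,\dots,u,\dots,x_N)-g(\bar x))\,\mu_i(du)$; the part of $\mu_i$ concentrated at $x_i$ contributes nothing since the integrand vanishes there, so this equals $\int_E(g(x_1,\dots,u,\dots,x_N)-g(\bar x))\,m_1^i(du)$, and summing over $i$ gives $\cL g(\bar x)$. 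The case $f(\bar x,\bar y)=g(\bar y)$ is symmetric, now using the second marginal $\pi_i^{\bar x,\bar y}(E,\cdot)=\nu_i$. As recalled before the statement, non-explosiveness of $\cL$ transfers to $\cL^c$, so the coupled process is well defined and $\cL^c\bard$ is finite, the $i$-th integral being bounded by $\cW_d(\mu_i,\nu_i)+(a_i+b_i)d(x_i,y_i)<\infty$.

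Next I would evaluate $\cL^c$ on the metric $\bard$. Replacing the $i$-th coordinates only changes the $i$-th summand of $\bard$, so $\bard(x_1,\dots,u,\dots;\,y_1,\dots,v,\dots)-\bard(\bar x,\bar y)=\tfrac1N\big(d(u,v)-d(x_i,y_i)\big)$; since $\pi_i^{\bar x,\bar y}$ has mass $a_i+b_i$ and realizes the Wasserstein distance,
\[
  \cL^c\bard(\bar x,\bar y)=\frac1N\sum_{i=1}^N\Big(\cW_d(\mu_i,\nu_i)-(a_i+b_i)\,d(x_i,y_i)\Big)=\frac1N\sum_{i=1}^N J_d^{x_i,y_i}\big(m_1^i,m_2^i\big),
\]
which is the asserted identity. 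Taking $c:=-\sup_{\bar x\neq\bar y}\big(\tfrac1N\sum_i J_d^{x_i,y_i}(m_1^i,m_2^i)\big)/\bard(\bar x,\bar y)$, the identity gives $\cL^c\bard(\bar x,\bar y)\le -c\,\bard(\bar x,\bar y)$ for all $\bar x,\bar y$ (both sides vanishing when $\bar x=\bar y$), and the localization-plus-Dynkin estimate recalled above then yields $\cW_d(\P(\bar X_t\in\cdot\mid\bar x),\P(\bar X_t\in\cdot\mid\bar y))\le e^{-ct}\bard(\bar x,\bar y)$, i.e. $\sigma\ge c$.

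The only genuinely delicate point is that $\cL^c$ must generate a bona fide Markov process, which requires the map $(\bar x,\bar y)\mapsto\pi_i^{\bar x,\bar y}$ to be measurable; I would obtain this from a measurable-selection argument for optimal transport plans (the correspondence assigning to a pair of marginals the nonempty compact set of optimal couplings is measurable and hence admits a measurable selector). The remaining verifications — the marginal identities and the computation of $\cL^c\bard$ — are routine once the bookkeeping around possible atoms of $m_1^i$ at $x_i$ and of $m_2^i$ at $y_i$ is noted to be harmless, precisely because the relevant integrands vanish on the ``diagonal'' coordinate.
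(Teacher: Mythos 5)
Your construction coincides with the paper's: the same per-coordinate optimal coupling of $F_i(x_i,\bar x,\cdot)+F_i(y_i,\bar y,E)\delta_{x_i}$ with $F_i(y_i,\bar y,\cdot)+F_i(x_i,\bar x,E)\delta_{y_i}$, the same marginal check that $\cL^c$ couples $\cL$ (using that the Dirac part contributes nothing), the same evaluation on $\bard$, and the same appeal to a measurable selection of optimal plans (the paper cites Zhang's theorem for this). The proof is correct and essentially identical to the paper's.
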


\begin{rem}
  This result remains valid under a more general setting. For instance, if $E$ is the subset of a Polish space $(F,\rho)$ and if $d:E\times E\rightarrow \R_+$ is a continuous non-negative function, then the infimum in the definition of the Wasserstein distance is  attained~\cite[Theorem~4.1]{Villani2009} and there exists a measurable selection of such optimal couplings~\cite[Corollary~5.22]{Villani2009}, so that the proof of Theorem~\ref{thm:main} holds true. An other important setting, which will be used in the following sections, is the case where $E$ is a separable metric space endowed with its Borel $\sigma$-field and $d$ is the {\color{black} trivial} distance (\textit{i.e.} $d(x,y)=\11_{x\neq y}$ for all $x,y\in E$). In this case, $\cW_d$ is one half of the total variation distance{\color{black}, that is}
    \begin{align*}
\color{black}      \cW_d(\mu_1,\mu_2)=\frac{1}{2}\,\left\|\mu_1-\mu_2\right\|_{TV}=\sup_{A\subset E} \mu_1(A)-\mu_2(A),
    \end{align*}
 and the optimal coupling in the definition of $\cW_d$ is a measurable function of the Jordan Hahn decomposition of signed measures (which is itself measurable because of the regularity of Borel probability measures on metric spaces~\cite[Theorem~1.1]{Billingsley1999} and because of the separability assumption), so that the proof of Theorem~\ref{thm:main} still applies.
\end{rem}

\textcolor{black}{
  \begin{rem}
    In Theorem~\ref{thm:main}, we obtain, using coupling methods,
    lower bounds on the coarse Ricci curvature of a system of
    particles from the behavior of individual particles. This idea of
    reconstituting transport distance bounds on Markov chains on
    product spaces from the behavior of marginals via suitable
    couplings was already used by Talagrand and Marton, see for
    instance~\cite{Marton1996,Marton2015} and references therein.
  \end{rem}
}

\begin{proof}[Proof of Theorem~\ref{thm:main}]
Fix $\bar{x}=(x_1,\ldots,x_N)\in E^N$ and $\bar{y}=(y_1,\ldots,y_N)\in E^N$. We define the operator
\begin{align*}
\cL^c f(\bar{x},\bar{y})&=\sum_{i=1}^N \int_{E\times E}  \left[f\left(\bar{x}+(u-x_i)e_i,\bar{y}+(v-y_i)e_i\right)-f(\bar{x},\bar{y})\right]n^i_{x_i,y_i,\bar{x},\bar{y}}(du,dv),
\end{align*}
where $e_i$ is the $i^{th}$ element of the canonical base of $\{0,1\}^N$ and where $n^i_{x_i,y_i,\bar{x},\bar{y}}$ is a coupling measure between the positive measures $F_i(x_i,\bar{x},\cdot)+F_i(y_i,\bar{y},E)\delta_{x_i}$ and $F_i(y_i,\bar{y},\cdot)+F_i(x_i,\bar{x},E)\delta_{y_i}$ such that 
\begin{multline}
\label{eq:def-ni}
\cW_{d}(F_i(x_i,\bar{x},\cdot)+F_i(y_i,\bar{y},E)\delta_{x_i},F_i(y_i,\bar{y},\cdot)+F_i(x_i,\bar{x},E)\delta_{y_i})\\=
 \int_{E\times E} d(u,v) \,n^i_{x_i,y_i,\bar{x},\bar{y}}(du,dv).
 \end{multline}
 Note that $n^i$ can be constructed as a measurable function of $(x_i,y_i,\bar{x},\bar{y})$ by \cite[Theorem~1.1]{Zhang1999}, so that $\cL^c$ is the infinitesimal generator of a pure jump process.
 
Let us first check that $\cL^c$ is a coupling operator for $\cL$. We have, for any bounded measurable function $f:E^N\times E^N\rightarrow\R$ such that $f(\bar{x},\bar{y})=g(\bar{x})$ for some function $g:E^N\rightarrow\R$ (so that $f$ only depends on $\bar{x}$),
\begin{align*}
\cL^c f(\bar{x},\bar{y})&=\sum_{i=1}^N \int_{E\times E}  \left[g\left(\bar{x}+(u-x_i)e_i\right)-g(\bar{x})\right]n^i_{x_i,y_i,\bar{x},\bar{y}}(du,dv).
\end{align*}
for all $\bar{x},\bar{y}\in E^N$.
For each couple $x_i,y_i\in E$, we observe that the integral with respect to the first marginal of $n^i_{x_i,y_i,\bar{x},\bar{y}}$ is equal to the integral with respect to $F_i(x_i,\bar{x},\cdot)+F_i(y_i,\bar{y},E)\delta_{x_i}$. Hence, since the integral of $g\left(\bar{x}+(u-x_i)e_i\right)-g(\bar{x})$ with respect to $\delta_{x_i}$ is $0$, we obtain that
\begin{align*}
\cL^c f(\bar{x},\bar{y})&= \sum_{i=1}^N \int_{E\times E}  \left[g\left(\bar{x}+(u-x_i)e_i\right)-g(\bar{x})\right]\,F_i(x_i,\bar{x},du)=\cL g(\bar{x}).
\end{align*}
By symmetry of the roles of $\bar{x}$ and $\bar{y}$, we deduce that $\cL^c$ is indeed a coupling operator for $\cL$.

\bigskip

Now our aim is to prove that, for all $\bar{x},\bar{y}\in E^N$,
$$
\cL^c d(\bar{x},\bar{y})\leq \frac{1}{N}\sum_{i=1}^N J_{d}^{x_i,y_i}(F_i(x_i,\bar{x},\cdot),F_i(y_i,\bar{y},\cdot)),
$$
which will conclude the proof of Theorem~\ref{thm:main}. 
We have
\begin{align*}
\cL^c d(\bar{x},\bar{y})&=\sum_{i=1}^N \int_{E\times E}  \frac{d(u,v)-d(x_i,y_i)}{N}\,n^i_{x_i,y_i,\bar{x},\bar{y}}(du,dv),
\end{align*}
where, by definition of $n^i_{x_i,y_i,\bar{x},\bar{y}}$,
\begin{align*}
\int_{E\times E} d(x_i,y_i)\,n^i_{x_i,y_i,\bar{x},\bar{y}}(du,dv)&=d(x_i,y_i)\,\,n^i_{x_i,y_i,\bar{x},\bar{y}}(E\times E)\\
&=d(x_i,y_i)\left(F_i(x_i,\bar{x},E)+F_i(y_i,\bar{y},E)\right),
\end{align*}
and hence, using equality~\eqref{eq:def-ni},
\begin{align*}
\cL^c d(\bar{x},\bar{y})&=\sum_{i=1}^N\frac{\,J_d^{x,y}(F_i(x_i,\bar{x},\cdot),F_i(y_i,\bar{y},\cdot))}{N}.
\end{align*}
\end{proof}

\begin{rem}
\label{rem:perturbed-diff}
One can use the results of this section to study Markov processes obtained from other types of infinitesimal generators. For instance, let $L$ be the infinitesimal generator of $N$ independent diffusion processes or piecewise deterministic processes and consider the infinitesimal generator $L+\cL$, which can be seen as a perturbation of independent random paths (given by $L$) by jumps with dependence (given by $\cL$). If there exists a coupling $L^c$ of $L$ such that $L^c d\leq -cd$ for some constant $c$ (see Remark~\ref{rem:coupling}), then one can expect to prove that
\begin{align*}
\sigma\geq c-\sup_{\bar{x},\bar{y}\in E^N}\frac{\frac{1}{N}\sum_{i=1}^N J_{d}^{x_i,y_i}(F_i(x_i,\bar{x},\cdot),F_i(y_i,\bar{y},\cdot))}{d(\bar{x},\bar{y})},
\end{align*} 
using the coupling $L^c+\cL^c$ of $L+\cL$.  Two difficulties arise :
first, one needs to ensure that this coupling operator defines a
proper Markov process; second, that it is possible to apply this
coupling operator to $d$. Since it is more intricate to check these
properties for general Markov processes, we mainly restrict our
attention to the case of pure jump type infinitesimal
generators. However, the method used here and in the particular
examples of the next sections can be adapted to these situations, as
in the following example.
\end{rem}

\begin{exa}
\label{exa:perturbed-diff}
Consider a process evolving in $\R^N$ with generator
\begin{align*}
{\cal H} f(\bar{x})=\sum_{i=1}^N \left( \frac{1}{2}\frac{\d^2 f}{\d x_i^2}(x_i)-b x_i \frac{\d f}{\d x_i}(x_i)\right)+\cL f(\bar{x}),
\end{align*}
where $b>0$ is a constant and $\cL$ is the pure jump type
infinitesimal generator of the introduction.  This is the generator of
a system of $N$ particles evolving as independent Ornstein Uhlenbeck
processes between their jumps (several properties of similar processes
with jumps are investigated in~\cite{Wang2011}). The jumps occur with
respect to a jump measure which depends on the position of the whole
system. Now, consider the following coupling generator
\begin{align*}
{\cal H}^c f(\bar{x},\bar{y})=\sum_{i=1}^N H^c_i f(\bar{x},\bar{y}) + \cL^c f(\bar{x},\bar{y}),
\end{align*}
where $H_i^c$ is the basic coupling (also called the parallel coupling) for Ornstein Uhlenbeck processes (see for instance~\cite[Example~2.5]{ChenLi1989}), which satisfies, for all $x_i,y_i\in\R^2$, $H^c_i d(x_i,y_i)\leq -b \,d(x_i,y_i)$ and where $\cL^c$ is the coupling for $\cL$ obtained from Theorem~\ref{thm:main}. If $\sigma$ is the lower bound provided by
 Theorem~\ref{thm:main} for the coarse Ricci curvature of the pure jump part, then
\begin{align*}
{\cal H}^c d(\bar{x},\bar{y}) \leq  -(b+\sigma)d(\bar{x},\bar{y}),
\end{align*}
so that the coarse Ricci curvature of the process generated by ${\cal H}$ is bounded {\color{black} from} below by $b+\sigma$. 
\end{exa}

\subsection{Some properties of $J_d^{x,y}$}
\label{sec:Jdxy}
One of the difficulties of the continuous time setting is that the
jump measures do not, in general, share the same mass, contrarily to
the discrete time case, where one can use the standard Wasserstein
distance to compare transition probabilities~\cite{Ollivier2009}. In
the definition of $J_d^{x,y}$, the quantity
$\cW_d(m_1+m_2(E)\delta_x,m_2+m_1(E)\delta_y)$ is used to compare
measures with different masses. However, this is clearly not a proper
distance between non-negative measures since this quantity is equal to
zero for all couple $m_1,m_2$ such that $m_1\in \R_+.\delta_y$ and
$m_2\in\R_+.\delta_x$\textcolor{black}{, where $\R_+.\delta_y$ is
  defined as the set of non-negative measures
  $\{\alpha\delta_y,\ \alpha\in\R_+\}$}. Proper generalizations of the
Wasserstein distance exist in the literature (such as the flat
metric~\cite{Dudley2002} and the generalized $W^{1,1}_1$ Wasserstein
distance~\cite{PiccoliRossi2016}\textcolor{black}{, see also the recent
  developments in~\cite{Chizat2018,Kondratyev2016,Liero2018} with
  applications to convergence of measure valued dynamical systems}),
but are not directly relevant in our context.

Fix $x,y\in E$. The aim of this section is to provide some properties
of $J_d^{x,y}$, which will be useful to derive upper bounds and hence
to apply Theorem~\ref{thm:main}.

\begin{prop}
\label{prop:homogeneity}
For all $m_1,n_1,m_2,n_2\in\cM_d(E)$ and all $\alpha>0$, we have
\begin{align}
\label{eq:J-prop2}
J_d^{x,y}(\alpha m_1,\alpha m_2)=\alpha J_d^{x,y}(m_1,m_2)
\end{align}
and
\begin{align}
\label{eq:J-prop1}
J_{d}^{x,y}(m_1+n_1,m_2+ n_2)&\leq J_d^{x,y}(m_1,m_2)+J_d^{x,y}(n_1,n_2).
\end{align}
\end{prop}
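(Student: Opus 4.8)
The plan is to prove the two assertions separately; both should reduce to elementary manipulations of coupling measures, so I do not expect a serious obstacle.

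For the homogeneity~\eqref{eq:J-prop2}, I would first dispose of the trivial case $m_1(E)+m_2(E)=0$, where $m_1=m_2=0$ and both sides vanish. Otherwise, since $(\alpha m)(E)=\alpha\,m(E)$ for $\alpha>0$, I would rewrite
\[
\alpha m_1+(\alpha m_2)(E)\,\delta_x=\alpha\bigl(m_1+m_2(E)\delta_x\bigr),\qquad
\alpha m_2+(\alpha m_1)(E)\,\delta_y=\alpha\bigl(m_2+m_1(E)\delta_y\bigr),
\]
observe that these two measures share the common mass $\alpha(m_1(E)+m_2(E))$, and apply the scaling identity~\eqref{eq:Wd-alpha} to pull the factor $\alpha$ out of the Wasserstein term. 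Since the remaining term in the definition of $J_d^{x,y}$ is $\alpha(m_1(E)+m_2(E))d(x,y)$, factoring $\alpha$ out of it as well gives~\eqref{eq:J-prop2}.

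For the subadditivity~\eqref{eq:J-prop1}, I would write $M_i=m_i(E)$, $N_i=n_i(E)$, choose an optimal coupling $\pi_1$ of $m_1+M_2\delta_x$ and $m_2+M_1\delta_y$ and an optimal coupling $\pi_2$ of $n_1+N_2\delta_x$ and $n_2+N_1\delta_y$ (these exist because the infimum in~\eqref{eq:wass-def}, hence in~\eqref{eq:Wd-alpha}, is attained), and form the superposition $\pi=\pi_1+\pi_2$. This $\pi$ is a non-negative measure on $E\times E$ of total mass $M_1+M_2+N_1+N_2=(m_1+n_1)(E)+(m_2+n_2)(E)$, with first marginal $(m_1+n_1)+(m_2+n_2)(E)\delta_x$ and second marginal $(m_2+n_2)+(m_1+n_1)(E)\delta_y$; hence it is an admissible competitor in the infimum defining $\cW_d\bigl((m_1+n_1)+(m_2+n_2)(E)\delta_x,(m_2+n_2)+(m_1+n_1)(E)\delta_y\bigr)$. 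This yields
\begin{multline*}
\cW_d\bigl((m_1+n_1)+(m_2+n_2)(E)\delta_x,(m_2+n_2)+(m_1+n_1)(E)\delta_y\bigr)\\
\le \int_{E\times E} d(u,v)\,\pi_1(du,dv)+\int_{E\times E} d(u,v)\,\pi_2(du,dv),
\end{multline*}
and the right-hand side equals $\cW_d(m_1+M_2\delta_x,m_2+M_1\delta_y)+\cW_d(n_1+N_2\delta_x,n_2+N_1\delta_y)$ by optimality of $\pi_1$ and $\pi_2$. Subtracting $(M_1+M_2+N_1+N_2)d(x,y)$ from both sides and reading off the definition of $J_d^{x,y}$ gives~\eqref{eq:J-prop1}.

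The only step that needs a moment's care is the verification that $\pi_1+\pi_2$ has the claimed marginals, which works because the Dirac masses line up correctly: the $\delta_x$ parts of both couplings sit on the first coordinate and the $\delta_y$ parts on the second, so they add up to $(m_2+n_2)(E)\delta_x$ and $(m_1+n_1)(E)\delta_y$ respectively. I do not expect any genuine difficulty here — the proof is a straightforward gluing of optimal couplings — and I note that the inequality in~\eqref{eq:J-prop1} is in general strict, precisely because a superposition of two individually optimal couplings need not be optimal for the combined transport problem.
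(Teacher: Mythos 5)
Your proposal is correct and follows essentially the same route as the paper: homogeneity via the scaling identity~\eqref{eq:Wd-alpha}, and subadditivity by superposing two optimal couplings and using the sum as a competitor before subtracting the mass-times-distance term. The extra verification of the marginals of $\pi_1+\pi_2$ is accurate and only makes explicit what the paper leaves implicit.
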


\begin{proof}
Equality~\eqref{eq:J-prop2} is an immediate consequence of the definition of $J_d^{x,y}$ and of~\eqref{eq:Wd-alpha}.

Let $\pi_1$ and $\pi_2$ be two coupling measures realizing the minimum in the definition of $\cW_d(m_1+m_2(E)\delta_x,m_2+m_1(E)\delta_y)$ and of $\cW_d(n_1+n_2(E)\delta_x,n_2+n_1(E)\delta_y)$ respectively. Then $\pi_1+\pi_2$ is a coupling measure for $m_1+n_1+(m_2(E)+n_2(E))\delta_x$ and $m_2+n_2+(m_1(E)+n_1(E))\delta_y$, so that
\begin{align*}
&\cW_d(m_1+n_1+(m_2(E)+n_2(E))\delta_x,m_2+n_2+(m_1(E)+n_1(E))\delta_y)\\
&\phantom{\cW_d(m_1} \leq \int_{E\times E} d(u,v)\,(\pi_1+\pi_2)(du,dv)\\
&\phantom{\cW_d(m_1} = \cW_d(m_1+m_2(E)\delta_x,m_2+m_1(E)\delta_y)+\cW_d(n_1+n_2(E)\delta_x,n_2+n_1(E)\delta_y).
\end{align*}
Subtracting $(m_1(E)+n_1(E)+m_2(E)+n_2(E))d(x,y)$ leads to~\eqref{eq:J-prop1}.
\end{proof}

The following inequality is in general a crude estimate, but it is in some cases useful and sharp (as in Example~\ref{exa:bd-proc-1d}).
\begin{prop}
\label{prop:classical-coupling}
We have, for all $m_1,m_2\in\cM_d(E)$,
\begin{align*}
J_d^{x,y}(m_1,m_2)\leq \int_E [d(u,y)-d(x,y)]\,m_1(du)+\int_E [d(x,v)-d(x,y)]\,m_2(dv).
\end{align*}
\end{prop}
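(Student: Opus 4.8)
To prove Proposition~\ref{prop:classical-coupling}, the idea is to exhibit an \emph{explicit} (and in general suboptimal) coupling measure $\pi$ between $m_1+m_2(E)\delta_x$ and $m_2+m_1(E)\delta_y$, and to bound $\cW_d$ from above by $\int d(u,v)\,\pi(du,dv)$, since $\cW_d$ is an infimum over couplings. The natural candidate is the coupling that transports $m_1$ (viewed as sitting on the first coordinate) to the mass $m_1(E)\delta_y$ concentrated at $y$, and transports the mass $m_2(E)\delta_x$ concentrated at $x$ to $m_2$ (viewed on the second coordinate). Concretely, writing the disintegration informally, I would set
\begin{align*}
\pi(du,dv) = m_1(du)\,\delta_y(dv) + \delta_x(du)\,m_2(dv).
\end{align*}

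\textbf{Checking the marginals.} First I would verify that $\pi$ is indeed a coupling of the right pair of measures. The first marginal of $\pi$ is $\pi(\cdot,E) = m_1(\cdot) + \delta_x(\cdot)\,m_2(E) = m_1 + m_2(E)\delta_x$, as required; and the second marginal is $\pi(E,\cdot) = m_1(E)\,\delta_y(\cdot) + m_2(\cdot) = m_2 + m_1(E)\delta_y$, again as required. Both measures have total mass $m_1(E) + m_2(E)$, so this is a legitimate coupling in the sense of~\eqref{eq:Wd-alpha}, and hence
\begin{align*}
\cW_d\bigl(m_1+m_2(E)\delta_x,\ m_2+m_1(E)\delta_y\bigr) \leq \int_{E\times E} d(u,v)\,\pi(du,dv).
\end{align*}

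\textbf{Evaluating the integral and concluding.} Splitting the integral according to the two pieces of $\pi$ gives $\int_E d(u,y)\,m_1(du) + \int_E d(x,v)\,m_2(dv)$; both integrals are finite since $m_1,m_2\in\cM_d(E)$ and $d(u,y)\le d(u,x_0)+d(x_0,y)$ for any reference point. Subtracting $(m_1(E)+m_2(E))\,d(x,y)$ from both sides and distributing $d(x,y)\,m_1(E) = \int_E d(x,y)\,m_1(du)$ and similarly for $m_2$ yields exactly the claimed bound
\begin{align*}
J_d^{x,y}(m_1,m_2)\leq \int_E [d(u,y)-d(x,y)]\,m_1(du)+\int_E [d(x,v)-d(x,y)]\,m_2(dv).
\end{align*}

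\textbf{Expected obstacle.} There is essentially no serious obstacle here; the only point requiring a little care is the rigorous justification that $\pi$ as written is a well-defined finite measure on $E\times E$ with the stated marginals (a routine verification via its action on product sets, or via Fubini for the disintegrated form), and that all integrals appearing are finite — which follows directly from the defining moment condition $m_1,m_2\in\cM_d(E)$ together with the triangle inequality for $d$. The proposition is deliberately a ``crude'' estimate corresponding to the trivial coupling that moves each source mass directly to its target atom without any cancellation, which is why it can fail to be sharp in general but becomes sharp in degenerate situations such as the birth-and-death example.
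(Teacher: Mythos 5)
Your proof is correct and follows exactly the paper's argument: the coupling $\pi = m_1\otimes\delta_y + \delta_x\otimes m_2$ is precisely the one used there, with the same marginal check, the same evaluation of the integral, and the same final subtraction of $(m_1(E)+m_2(E))\,d(x,y)$.
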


\begin{proof}
Since $m_1\otimes \delta_y+\delta_x\otimes m_2$ is a coupling measure for $m_1+m_2(E)\delta_x$ and $m_2+m_1(E)\delta_y$, we have 
\begin{align*}
\cW_d(m_1+m_2(E)\delta_x,m_2+m_1(E)\delta_y)&\leq \int_{E\times E} d(u,v) \,(m_1\otimes \delta_y+\delta_x\otimes m_2)(du,dv)\\
&=\int_E d(u,y)\,m_1(du)+\int_E d(x,v)\,m_2(dv).
\end{align*}
Subtracting $(m_1(E)+m_2(E))d(x,y)$, one obtains the desired inequality.
\end{proof}

The following property implies in particular that, if $m_1$ and $m_2$ are two probability measures, then $J_d^{x,y}(m_1,m_2)$ is smaller than $\cW_d(m_1,m_2)-d(x,y)$. It also implies that, for measures $m_1$ and $m_2$ on $E$ such that $m_1(E)\geq m_2(E)$, then 
\begin{align}
\label{eq:alt-bound}
J_{d}^{x,y}(m_1,m_2)&\leq
 \cW_d(m_1, m_2+(m_1(E)-m_2(E))\delta_y)-m_1(E)d(x,y).
\end{align}
\begin{prop}
We have, for all $m_1,m_2\in\cM_d(E)$,
\begin{align*}
J_d^{x,y}(m_1,m_2)=\min_{a,b} \cW_d(m_1+a\delta_x,m_2+b\delta_y)-(m_1(E)+a)\,d(x,y),
\end{align*}
where $a,b$ are taken in the set of real numbers such that $m_1+a\delta_x$ and $m_2+b\delta_y$ are non-negative measures on $E$ with equal mass, \textit{i.e.} such that $m_1({\color{black} E})+a\geq 0$, $m_2({\color{black} E})+b\geq 0$ and $m_1(E)+a=m_2(E)+b$. In addition, the minimum is attained for all $a\geq m_2(E)$ (or equivalently $b\geq m_1(E)$).
\end{prop}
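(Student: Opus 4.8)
The plan is to prove the identity
\[
J_d^{x,y}(m_1,m_2)=\min_{a,b} \cW_d(m_1+a\delta_x,m_2+b\delta_y)-(m_1(E)+a)\,d(x,y)
\]
by showing the right-hand side is constant in $(a,b)$ on the admissible set and equals the value at $(a,b)=(m_2(E),m_1(E))$, which is exactly the definition of $J_d^{x,y}(m_1,m_2)$. Write $\Phi(a)=\cW_d(m_1+a\delta_x,m_2+b\delta_y)-(m_1(E)+a)d(x,y)$, where $b=b(a)=m_1(E)-m_2(E)+a$ is forced by the equal-mass constraint, and let $\alpha_0=m_2(E)$, $\beta_0=m_1(E)$ denote the ``canonical'' choice. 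First I would establish a \emph{lower bound}: $\Phi(a)\geq \Phi(\alpha_0)$ for every admissible $a$. The idea is that any coupling $\pi$ of $m_1+a\delta_x$ and $m_2+b\delta_y$ can be augmented to a coupling of $m_1+\alpha_0\delta_x$ and $m_2+\beta_0\delta_y$ by adding mass on the diagonal point $(x,y)$; more precisely, if $a<\alpha_0$ I add $(\alpha_0-a)\delta_{(x,y)}$ to $\pi$ (legitimate since then $\beta_0-b=\alpha_0-a\ge 0$ as well), which increases the transport cost by exactly $(\alpha_0-a)d(x,y)$, giving
\[
\cW_d(m_1+\alpha_0\delta_x,m_2+\beta_0\delta_y)\le \cW_d(m_1+a\delta_x,m_2+b\delta_y)+(\alpha_0-a)d(x,y),
\]
and subtracting $(m_1(E)+\alpha_0)d(x,y)$ from the left and rearranging the right yields $\Phi(\alpha_0)\le\Phi(a)$. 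The case $a>\alpha_0$ needs the reverse manipulation, removing diagonal mass, which requires a short argument that an optimal coupling of $m_1+a\delta_x$ and $m_2+b\delta_y$ must place at least $(a-\alpha_0)$ units of mass at $(x,y)$—this is where the main subtlety lies.

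The second half is the \emph{upper bound} $\Phi(a)\le\Phi(\alpha_0)$, equivalently, the minimum over the admissible set is attained at $(\alpha_0,\beta_0)$ and at all larger values. For the direction showing $\Phi$ is non-increasing beyond $\alpha_0$ (hence that the min is attained there and remains attained for $a\ge\alpha_0$), I would argue that when $a\geq\alpha_0$, an optimal coupling for $m_1+a\delta_x$, $m_2+b\delta_y$ can be taken to send the ``excess'' mass $a-\alpha_0$ at $x$ to the point $y$ (since $b-\beta_0=a-\alpha_0$ and $\delta_x\otimes\delta_y$-type transport of that excess costs $(a-\alpha_0)d(x,y)$, which is cheap relative to any alternative, because moving from $x$ costs at least $d(x,y)$ to reach the support needed). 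Removing that diagonal block exhibits a coupling of $m_1+\alpha_0\delta_x$ and $m_2+\beta_0\delta_y$ of cost $\cW_d(m_1+a\delta_x,m_2+b\delta_y)-(a-\alpha_0)d(x,y)$, which after subtracting the affine term shows $\Phi(\alpha_0)\ge\Phi(a)$; combined with the lower bound this gives $\Phi(a)=\Phi(\alpha_0)$ for all $a\geq\alpha_0$. Finally, for $a<\alpha_0$ the lower bound already gives $\Phi(a)\ge\Phi(\alpha_0)$, so $\alpha_0$ is a global minimizer; thus $\min_{a,b}\Phi=\Phi(\alpha_0)=J_d^{x,y}(m_1,m_2)$.

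The main obstacle I anticipate is making rigorous the claim that an optimal coupling places the appropriate amount of mass on $(x,y)$—or equivalently the ``peeling off the diagonal block'' step when $a>\alpha_0$. The clean way to handle it is: for the \emph{upper} bound it suffices to \emph{exhibit} one coupling, so I can simply take an optimal coupling $\pi^\star$ of $m_1+\alpha_0\delta_x$ and $m_2+\beta_0\delta_y$ and add $(a-\alpha_0)\delta_{(x,y)}$—no peeling needed, cost increases by exactly $(a-\alpha_0)d(x,y)$, giving $\cW_d(m_1+a\delta_x,m_2+b\delta_y)\le \cW_d(m_1+\alpha_0\delta_x,m_2+\beta_0\delta_y)+(a-\alpha_0)d(x,y)$, i.e.\ $\Phi(a)\le\Phi(\alpha_0)$ directly; and the matching \emph{lower} bound $\Phi(a)\ge\Phi(\alpha_0)$ for \emph{all} admissible $a$ (both $a<\alpha_0$ and $a>\alpha_0$) follows from the first augmentation argument, which only ever \emph{adds} diagonal mass and hence needs no optimal-coupling structure theorem. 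So in fact both inequalities reduce to the elementary ``add a diagonal block'' construction applied in the two directions, and the apparent obstacle dissolves; one just has to check the non-negativity and equal-mass bookkeeping carefully, together with invoking~\eqref{eq:Wd-alpha} and the attainment of the Wasserstein infimum (Lemma~5.2 and Theorem~5.4 in~\cite{Chen2004}) to justify working with optimal couplings throughout.
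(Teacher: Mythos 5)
Your overall architecture is right, and the two ``add a diagonal block'' constructions you describe are exactly the two easy halves of the paper's proof: adding $(\alpha_0-a)\delta_{(x,y)}$ to an optimal coupling at level $a<\alpha_0$ gives $\Phi(\alpha_0)\le\Phi(a)$, and adding $(a-\alpha_0)\delta_{(x,y)}$ to an optimal coupling at level $\alpha_0$ gives $\Phi(a)\le\Phi(\alpha_0)$ for $a\ge\alpha_0$. But your final paragraph contains a genuine error: you claim that the lower bound $\Phi(a)\ge\Phi(\alpha_0)$ for $a>\alpha_0$ ``follows from the first augmentation argument.'' It does not. Augmentation only ever produces a coupling at a \emph{higher} level from one at a lower level, so it only shows that $\Phi$ is non-increasing in $a$; applied with $a>\alpha_0$ it yields $\Phi(a)\le\Phi(\alpha_0)$, the opposite of what is needed. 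Without the missing inequality you cannot rule out that $\Phi$ keeps strictly decreasing on $[\alpha_0,+\infty)$, in which case the infimum over the (unbounded above) admissible set would be strictly smaller than $J_d^{x,y}(m_1,m_2)$, and possibly not attained. So the obstacle you dismiss does not dissolve: the peeling step is unavoidable, and the sketch of it in your second paragraph is both non-rigorous (the ``cheap relative to any alternative'' justification is never carried out, and is not the right mechanism) and stated with the conclusion reversed (exhibiting a level-$\alpha_0$ coupling of cost $\cW_d(m_1+a\delta_x,m_2+b\delta_y)-(a-\alpha_0)d(x,y)$ proves $\Phi(\alpha_0)\le\Phi(a)$, not $\Phi(\alpha_0)\ge\Phi(a)$).

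The clean way to fill the gap --- and what the paper does --- is a pure mass-counting argument requiring no optimality considerations: for \emph{any} coupling $\pi$ of $m_1+a\delta_x$ and $m_2+b\delta_y$ with $a>m_2(E)$,
\begin{align*}
m_1(\{x\})+a=\pi(\{x\},E)\le \pi(\{x\},\{y\})+\pi(E,E\setminus\{y\})=\pi(\{x\},\{y\})+m_2(E)-m_2(\{y\}),
\end{align*}
whence $\pi(\{x\},\{y\})\ge a-m_2(E)$. Therefore $\pi'=\pi-(a-m_2(E))\,\delta_x\otimes\delta_y$ is a non-negative coupling of $m_1+m_2(E)\delta_x$ and $m_2+m_1(E)\delta_y$ with cost $\int_{E\times E}d(u,v)\,\pi(du,dv)-(a-m_2(E))\,d(x,y)$; taking $\pi$ optimal and subtracting the affine terms gives $\Phi(\alpha_0)\le\Phi(a)$, which combined with your augmentation bound yields $\Phi(a)=\Phi(\alpha_0)$ on $[\alpha_0,+\infty)$. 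With this step restored your argument coincides with the paper's proof.
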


\begin{proof}
Taking $a=m_2(E)$ and $b=m_1(E)$, one deduces that $J_d^{x,y}(m_1,m_2)$ is larger than the right hand side. 

Let us now prove the converse inequality. Let $a$ and $b$ be two real numbers such that $m_1+a\delta_x$ and $m_2+b\delta_y$ are non-negative measures on $E$ with equal mass and denote by $\pi$ a coupling which realizes the minimum in the definition of $\cW_d(m_1+a\delta_x,m_2+b\delta_y)$.

If $a< m_2(E)$ (and hence $b<m_1(E)$), then $\pi+(m_2(E)-a)\delta_x\otimes \delta_y$ is a coupling measure for $m_1+m_2(E)\delta_x$ and $m_2+m_1(E)\delta_y$, so that
\begin{align*}
\cW_d(m_1+m_2(E)\delta_x,m_2+m_1(E)\delta_y)&\leq \int_{E\times E} d(u,v)\,(\pi+(m_2(E)-a)\delta_x\otimes \delta_y)(du,dv)\\
&=\cW_d((m_1+a\delta_x,m_2+b\delta_y)+(m_2(E)-a)d(x,y).
\end{align*}
Subtracting $(m_1(E)+m_2(E))d(x,y)$ implies that
\begin{align*}
J_d^{x,y}(m_1,m_2)\leq \cW_d(m_1+a\delta_x,m_2+b\delta_y)-(m_1(E)+a)\,d(x,y).
\end{align*}
If $a>m_2(E)$ (and hence $b>m_1(E)$), then 
\begin{align*}
m_1(\{x\})+a&=\pi(\{x\},E)=\pi(\{x\},\{y\})+\pi(\{x\},E\setminus\{y\})\\
&\leq \pi(\{x\},\{y\})+\pi(E,E\setminus\{y\})=\pi(\{x\},\{y\})+m_2(E\setminus\{y\}).
\end{align*}
We deduce that $\pi(\{x\},\{y\})\geq m_1(\{x\})+m_2(\{y\})+a-m_2(E)$. Hence $\pi'=\pi-(a-m_2(E))\delta_x\otimes\delta_y$ is a non-negative measure and it is a coupling measure for $m_1+m_2(E)\delta_x$ and $m_2+m_1(E)\delta_y$. Since it is a restriction of $\pi$ and since optimality is inherited by restriction (see \cite[Theorem~4.6]{Villani2009}), it is an optimal coupling for its marginals. We deduce that
\begin{align*}
\cW_d(m_1+m_2(E)\delta_x,m_2+m_1(E)\delta_y)&=\int_{E\times E} d(u,v)\,\pi'(du,dv)\\
&=\int_{E\times E} d(u,v)\,\pi(du,dv) - (a-m_2(E))d(x,y)\\
&=\cW_d(m_1+a\delta_x,m_2+b\delta_y)-(a-m_2(E))d(x,y)).
\end{align*}
Subtracting $(m_1(E)+m_2(E))d(x,y)$ on both sides concludes the proof.
\end{proof}

\subsection{The particular case $N=1$}
\label{sec:N-equal-1}

In this section, we state our result in the simpler case $N=1$. The
following corollary is an immediate consequence of
Theorem~\ref{thm:main}.

\begin{cor}
\label{cor:only-1-part}
Let $L$ be the infinitesimal generator of a pure jump non-explosive
Markov process on $E$ defined, for any bounded measurable function
$f:E\rightarrow\R$, by
\begin{align*}
Lf(x)=\int_E (f(u)-f(x))\,q(x,du),\ \forall x\in E,
\end{align*}
where $(q(x,du))_{x\in E}$ is a jump kernel of finite non-negative
measures. Then the coarse Ricci curvature $\sigma$ of the Markov
process generated by $L$ satisfies
\begin{align*}
\sigma\geq -\sup_{x,y\in E} \frac{J_d^{x,y}\left(q(x,\cdot),q(y,\cdot)\right)}{d(x,y)}.
\end{align*}
\end{cor}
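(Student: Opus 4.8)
The plan is to deduce Corollary~\ref{cor:only-1-part} directly from Theorem~\ref{thm:main} by specializing to $N=1$. First I would check that the hypotheses of Theorem~\ref{thm:main} are exactly met by the data $(L,q,d)$ of the corollary: with $N=1$ the state space $E^N$ is $E$ itself, the averaged metric $\bard$ coincides with $d$, and the generator $\cL$ of the introduction collapses, upon setting $F_1(x_1,\bar x,\cdot)=q(x_1,\cdot)$ (which here does not depend on the remaining coordinates of $\bar x$), to $\cL f(x)=\int_E(f(y)-f(x))\,q(x,dy)=Lf(x)$. The standing assumptions of the introduction are inherited: $L$ generates a non-explosive pure jump process by hypothesis, $q$ is a measurable kernel of finite non-negative measures, and the integrability condition $\int d(x,y)\,F_1(x,\bar x,dy)<\infty$ is precisely the requirement that $q(x,\cdot)\in\cM_d(E)$ for each $x$, which is what makes $J_d^{x,y}(q(x,\cdot),q(y,\cdot))$ well defined (should this fail for some $x$ or $y$, the supremum in the statement is $+\infty$ and the asserted inequality is vacuous).

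Next I would apply Theorem~\ref{thm:main}. It provides a coupling operator $\cL^c$ of $L$ for which the single-term sum $\frac1N\sum_{i=1}^N J_d^{x_i,y_i}\bigl(F_i(x_i,\bar x,\cdot),F_i(y_i,\bar y,\cdot)\bigr)$ reduces, with $N=1$, to $J_d^{x,y}(q(x,\cdot),q(y,\cdot))$, so that $\cL^c d(x,y)=J_d^{x,y}(q(x,\cdot),q(y,\cdot))$ for all $x,y\in E$. The lower bound on the coarse Ricci curvature stated in the second part of Theorem~\ref{thm:main} then reads, with $N=1$ and $\bard=d$,
$$\sigma\geq-\sup_{x,y\in E}\frac{J_d^{x,y}(q(x,\cdot),q(y,\cdot))}{d(x,y)},$$
which is exactly the claim of the corollary.

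I do not expect a genuine obstacle here: the corollary is, as stated, an immediate consequence of Theorem~\ref{thm:main}, and the only work is the bookkeeping of identifying the $N=1$ instance of the theorem's hypotheses with $(L,q,d)$. The one point deserving a sentence is the domain condition on $q$ noted above, which guarantees finiteness of $J_d^{x,y}$ (or, alternatively, one simply allows the value $+\infty$ in the supremum).
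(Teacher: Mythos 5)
Your proposal is correct and matches the paper's approach: the paper gives no separate argument, simply stating that the corollary is an immediate consequence of Theorem~\ref{thm:main}, which is exactly the specialization to $N=1$ (with $F_1=q$ and $\bard=d$) that you carry out. The extra remark about the integrability condition on $q(x,\cdot)$ ensuring $J_d^{x,y}$ is well defined is a sensible piece of bookkeeping and does not change the argument.
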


In Example~\ref{exa:tv-N-equal1}, we apply
Corollary~\ref{cor:only-1-part} to the case where $d$ is the
{\color{black} trivial distance} $d(x,y)=\11_{x\neq y}$ and
$q(x,\cdot)$ admits a density $\alpha(x,y)$ with respect to a common
non-negative measure $\zeta$ on $E$.  In Example~\ref{exa:bd-proc-1d},
we show that the lower bound obtained in
Corollary~\ref{cor:only-1-part} is in fact equal to the coarse Ricci
curvature in the case of birth and death processes. In a second
example, we compute a lower bound for a modified version of birth and
death processes, using a slight extension of a lemma by Vallender in
order to compute the Wasserstein distance between probability measures
on the real line.

{\color{black}
\begin{rem}
  For continuous time birth and death processes,
  Mielke~\cite{Mielke2013} recently computed a lower bound for an
  other notion of discrete Ricci curvature, related to the fact that
  the evolution of the law of a continuous time birth and death
  process can be described through a gradient flow system. To relate
  both definitions is still an open problem, but the lower bound
  obtained in Mielke's work has a similar expression (see Section~5 in
  \cite{Mielke2013} and Example~\ref{exa:bd-proc-1d} below) and may be
  a good starting point to compare both approaches. This example has
  also been considered by Fathi and Maas
  in~\cite[Theorem~4.1]{Fathi2016} in the setting of Entropic Ricci
  curvature.
\end{rem}}

\begin{exa}
\label{exa:tv-N-equal1} In this example, $d$ is the {\color{black} trivial} distance on $E$. Assume that there exist a non-negative measure $\zeta$ on $E$ and a measurable function $\alpha:E\times E\rightarrow\R_+$ such that 
\begin{align*}
q(x,dz)=\alpha(x,z)\,\zeta(dz),\ \forall x\in E.
\end{align*}
Without loss of generality, we assume that $\alpha(x,x)=0$ for all $x\in E$.
Then, using the fact that the Wasserstein distance is one half of the total variation distance, we obtain, for all $x\neq y\in E$,
\begin{align*}
J_d^{x,y}(q(x,\cdot),q(y,\cdot))&=\frac{1}{2} \int_{E\setminus\{x,y\}} |\alpha(x,z)-\alpha(y,z)|\,\zeta(dz)\\
&\quad+\frac{1}{2}|\alpha(x,x)\zeta(\{x\})+q(y,E)-\alpha(y,x)\zeta(\{x\})|\\
&\quad+\frac{1}{2}|\alpha(x,y)\zeta(\{y\})-\alpha(y,y)\zeta(\{y\})-q(x,E)| -q(x,E)-q(y,E)\\
&=\frac{1}{2}\int_{E\setminus\{x,y\}}|\alpha(x,z)-\alpha(y,z)|\,\zeta(dz)\\
&\quad-\frac{1}{2}\alpha(y,x)\zeta(\{x\})-\frac{1}{2}\alpha(x,y)\zeta(\{y\}) -\frac{1}{2}q(x,E)-\frac{1}{2}q(y,E),
\end{align*}
where we used the fact that $\alpha(x,x)\zeta(\{x\})+q(y,E)-\alpha(y,x)\zeta(\{x\})\geq 0$ and $\alpha(x,y)\zeta(\{y\})-\alpha(y,y)\zeta(\{y\})-q(x,E)\leq 0$. Rearranging the terms, we obtain
\begin{align*}
J_d^{x,y}(q(x,\cdot),q(y,\cdot))=-\int_{E} \alpha(x,z)\wedge\alpha(y,z)\,\zeta(dz)-\alpha(y,x)\zeta(\{x\})-\alpha(x,y)\zeta(\{y\}).
\end{align*}
In particular, the coarse Ricci curvature $\sigma$ of the process satisfies
\begin{align*}
\sigma\geq \inf_{x\neq y} \,\left[\int_{E} \alpha(x,z)\wedge\alpha(y,z)\,\zeta(dz)+\alpha(y,x)\zeta(\{x\})+\alpha(x,y)\zeta(\{y\})\right].
\end{align*}
\end{exa}

\begin{exa}
\label{exa:bd-proc-1d}
Consider the particular case where {\color{black}
  $E=\N^{0}:=\{0,1,2,\ldots\}$} and $L$ is the infinitesimal generator
of a birth and death process with birth rates
$(b_x)_{x\in\N^{\color{black}0}}$ and death rates $(d_x)_{x\in\N^{\color{black}0}}$, all
positive but $d_0=0$. In this case, for all $x,y\in\N^{\color{black}0}$,
\begin{align*}
q(x,{y})=\begin{cases}
b_x&\text{if }y=x+1\\
d_x&\text{if }x\geq 1\text{ and }y=x-1\\
0&\text{otherwise}
\end{cases}
\end{align*}
We also assume that the distance $d$ is given by
\begin{align*}
d(x,y)=\left|\sum_{k=0}^{x-1}u_k-\sum_{k=0}^{y-1}u_k\right|,
\end{align*}
where $(u_k)_{k\geq 0}$ is a sequence of positive numbers.
Using Proposition~\ref{prop:classical-coupling}, we obtain, for all $x\leq y-1$,
\begin{align*}
J_d^{x,y}(q(x,\cdot),q(y,\cdot))&\leq  d_x u_{x-1}-b_x u_x-d_y u_{y-1}+b_y u_y\\
&=\sum_{k=x}^{y-1} u_k\,\left(d_k \frac{u_{k-1}}{u_k}-b_k-d_{k+1}+b_{k+1}\frac{u_{k+1}}{u_k}\right)\\
&\leq -d(x,y)\,\inf_{x\in\N^{\color{black}0}} b_x+d_{x+1}-d_x \frac{u_{x-1}}{u_x}-b_{x+1}\frac{u_{x+1}}{u_x},
\end{align*}
with the convention $u_{-1}=0$.
Hence Corollary~\ref{cor:only-1-part} entails that the coarse Ricci curvature $\sigma$ of the process satisfies
\begin{align*}
\sigma\geq \inf_{x\in\N^{\color{black}0}} b_x+d_{x+1}-d_x \frac{u_{x-1}}{u_x}-b_{x+1}\frac{u_{x+1}}{u_x}.
\end{align*}
In \cite{Chen1994}, \cite{Joulin2009} and \cite{ChafaiJoulin2013}, it is shown that there is equality in the above equation. This implies that, at least in some cases, Corollary~\ref{cor:only-1-part} and hence Theorem~\ref{thm:main} are sharp. Note that, in this case, Proposition~\ref{prop:classical-coupling} provides an explicit expression for the quantity $J_d^{x,y}(q(x,\cdot),q(y,\cdot))$.
\end{exa}

\medskip
\begin{exa}
\label{exa:bd-proc-1d-modified}
The choice of the classical coupling (i.e. the use of Proposition~\ref{prop:classical-coupling}) in the previous example was judicious because the measures involved for a birth and death process are stochastically ordered : the jumps measures $q(x,\cdot)$ are such that $q(x,\cdot)+m_2(E)\delta_x$ is always dominated by $q(y,\cdot)+m_1(E)\delta_y$ for $x\leq y$, so that an optimal coupling between the measures involved is obtained by the classical coupling (this also explains why, in~\cite[Theorem~4.3]{Joulin2009} for instance, the  classical coupling is sufficient to recover the exact coarse Ricci curvature).   This is not the case in the present example.

We assume that
\begin{align*}
q(x,{y})=\begin{cases}
b_x&\text{if }y=x+2\\
d_x&\text{if }y=x-1,\\
0&\text{otherwise}.
\end{cases}
\end{align*}
 In this case, a similar computation as above shows that, for $x\leq y-2$,
\begin{align*}
J_d^{x,y}(q(x,\cdot),q(y,\cdot))\leq d_x u_{x-1}-b_x (u_x+u_{x+1})-d_y u_{y-1}+b_y(u_y+u_{y+1}).
\end{align*}
Using the same method (which relies on Proposition~\ref{prop:classical-coupling}) in the case $x=y-1$ would lead to the following bound
\begin{align*}
J_d^{x,y}(q(x,\cdot),q(y,\cdot))\leq d_x u_{x-1}-(b_x+d_{x+1})u_x+(b_x+b_{x+1})u_{x+1}+b_{x+1} u_{x+2}
\end{align*}
 Instead, we use Lemma~\ref{lem:vallender} below to obtain, when $x=y-1$,
\begin{align*}
\cW_d(q(x,\cdot)+q(y,E)&\delta_x,q(y,\cdot)+q(x,E)\delta_y)\\
&= d_x u_{x-1}+(d_x+b_{x+1})u_x+|b_{x+1}-b_x|u_{x+1}+b_{x+1} u_{x+2}
\end{align*}
and hence
\begin{align*}
J_d^{x,y}(q(x,\cdot),q(y,\cdot))=d_x u_{x-1}-(b_x+d_{x+1})u_x+|b_{x+1}-b_x|u_{x+1}+b_{x+1} u_{x+2}.
\end{align*}
Note that this quantity is always strictly smaller than the bound obtained using Proposition~\ref{prop:classical-coupling} (which corresponds to the classical coupling).
We deduce that the coarse Ricci curvature of the process satisfies
\begin{align*}
\sigma\geq \inf_{x\in\N^{\color{black}0}} b_x+d_{x+1}-d_x \frac{u_{x-1}}{u_x}-|b_{x+1}-b_x|\frac{u_{x+1}}{u_x}-b_{x+1}\frac{u_{x+2}}{u_x}.
\end{align*}
Lemma~\ref{lem:vallender}  allowed us to provide a computable bound for the coarse Ricci curvature. This method can be easily generalized to other jump measures on the real line and, although the coupling operator realizing this bound might be quite difficult to build explicitly, our result  shows that such a coupling operator indeed exists.
\end{exa}

\medskip

The following lemma, which is a slight extension of~\cite{Vallender1973}, can be useful to compute the Wasserstein distance between laws on the real line when the distance is similar to the one of the two previous examples. Note that in this statement, we define $\cW_d(m_1,m_2)$ as the infimum in~\eqref{eq:wass-def}, although $d$ might not be a distance in general. Of course, this result immediately extends to arbitrary non-negative measures $m_1$ and $m_2$ sharing the same mass.
\begin{lem}
\label{lem:vallender}
Let $\mu$ be a positive measure on $\R$ and consider the functional on $\R\times \R$ defined by $d(x,y)=\mu([\min(x,y),\max(x,y)))$ for all $x,y\in \R$. Then, for any probability measures $m_1$ and $m_2$ belonging to $\cM_d(\R)$, we have 
\begin{align*}
\cW_d(m_1,m_2)= \int_\R \left|F_1(t)-F_2(t)\right|\,\mu(dt),
\end{align*}
where $F_1$ and $F_2$ are the cumulative distribution functions of $m_1$ and $m_2$ respectively. Moreover, the infimum in the definition of $\cW_d(m_1,m_2)$ is attained.

\end{lem}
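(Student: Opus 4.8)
The plan is to prove the two matching bounds $\cW_d(m_1,m_2)\ge \int_\R|F_1(t)-F_2(t)|\,\mu(dt)$ and $\cW_d(m_1,m_2)\le \int_\R|F_1(t)-F_2(t)|\,\mu(dt)$, the second one being witnessed by the monotone (quantile) coupling, which will simultaneously give the asserted attainment. Throughout I take $F_i(t)=m_i((-\infty,t])$ (the right-continuous versions). The cornerstone is the elementary identity
\[
d(x,y)=\mu\big([\min(x,y),\max(x,y))\big)=\int_\R \big|\11_{t\ge x}-\11_{t\ge y}\big|\,\mu(dt),\qquad x,y\in\R,
\]
obtained by discussing the position of $t$ relative to $x$ and $y$; note also that $\int_\R \11_{t\ge x}\,m_i(dx)=F_i(t)$.

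First, integrating this identity against $m_i$ and applying Tonelli gives $\int_{[x_0,\infty)}(1-F_i(t))\,\mu(dt)<\infty$ and $\int_{(-\infty,x_0)}F_i(t)\,\mu(dt)<\infty$ for a fixed $x_0\in\R$ and $i=1,2$, hence $\int_\R|F_1(t)-F_2(t)|\,\mu(dt)<\infty$ (bound $|F_1-F_2|$ by $F_1+F_2$ on $(-\infty,x_0)$ and by $(1-F_1)+(1-F_2)$ on $[x_0,\infty)$). For the lower bound, let $\pi$ be any coupling of $m_1$ and $m_2$. Inserting the identity into $\int_{\R^2}d(x,y)\,\pi(dx,dy)$, exchanging the $\mu$- and $\pi$-integrals by Tonelli, and pulling the absolute value outside the inner integral,
\[
\int_{\R^2}d(x,y)\,\pi(dx,dy)=\int_\R\Big(\int_{\R^2}\big|\11_{t\ge x}-\11_{t\ge y}\big|\,\pi(dx,dy)\Big)\mu(dt)\ \ge\ \int_\R\Big|\int_{\R^2}\big(\11_{t\ge x}-\11_{t\ge y}\big)\,\pi(dx,dy)\Big|\mu(dt).
\]
Since $\pi$ has marginals $m_1$ and $m_2$, the inner integral on the right equals $F_1(t)-F_2(t)$, so the right-hand side is $\int_\R|F_1(t)-F_2(t)|\,\mu(dt)$, and taking the infimum over $\pi$ gives the lower bound.

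For the upper bound, let $U$ be uniform on $(0,1)$ and set $X=F_1^{-1}(U)$, $Y=F_2^{-1}(U)$ with $F_i^{-1}(u)=\inf\{s:F_i(s)\ge u\}$, so that $\{X\le t\}=\{U\le F_1(t)\}$ (by right-continuity of $F_1$) and likewise for $Y$; the law $\pi^\star$ of $(X,Y)$ is a coupling of $m_1$ and $m_2$. Using the identity once more together with Tonelli,
\[
\int_{\R^2}d(x,y)\,\pi^\star(dx,dy)=\E\,d(X,Y)=\int_\R \P\big(\min(X,Y)\le t<\max(X,Y)\big)\,\mu(dt),
\]
and $\{\min(X,Y)\le t<\max(X,Y)\}$ is exactly the event that precisely one of $X\le t$ and $Y\le t$ holds, i.e. that $U$ lies between $F_1(t)$ and $F_2(t)$, an event of probability $|F_1(t)-F_2(t)|$. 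Hence $\int_{\R^2}d(x,y)\,\pi^\star(dx,dy)=\int_\R|F_1(t)-F_2(t)|\,\mu(dt)$, which yields the upper bound; combined with the previous step this gives the claimed equality and shows $\pi^\star$ is optimal, so in particular the infimum defining $\cW_d(m_1,m_2)$ is attained.

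I expect the only delicate points to be measure-theoretic bookkeeping: carefully verifying the interval identity for $d$, justifying the Fubini/Tonelli exchanges (harmless since all integrands are non-negative and $\pi,\pi^\star$ are probability measures --- if $\mu$ is not $\sigma$-finite one may restrict it to the set carrying the relevant finite mass, as both sides are infinite otherwise), and fixing the generalized-inverse convention so that $\{F_i^{-1}(U)\le t\}=\{U\le F_i(t)\}$ holds exactly. None of these is a genuine obstacle.
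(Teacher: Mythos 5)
Your proof is correct, but it takes a genuinely different and more self-contained route than the paper's. The paper reduces the problem to the classical Euclidean case: it pushes everything forward through the non-decreasing map $f(x)=\mu(-\infty,x)$, for which $d(x,y)=|f(x)-f(y)|$, proves the technical identity $G_i^{-1}=f\circ F_i^{-1}$ for the quantile functions of the pushed-forward laws (its Step~1), invokes Vallender's formula $\cW_{|\cdot|}(\mathrm{Law}(Y_1),\mathrm{Law}(Y_2))=\int|G_1-G_2|$ for the Euclidean distance, and converts back by a change of variables; the lower bound over arbitrary couplings is then inherited from the optimality of the comonotone coupling in the Euclidean setting. You instead work directly from the layer-cake identity $d(x,y)=\int_\R|\11_{t\ge x}-\11_{t\ge y}|\,\mu(dt)$: the lower bound for an arbitrary coupling becomes a one-line application of Tonelli plus the triangle inequality for integrals, and the upper bound is an explicit computation for the quantile coupling. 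Both proofs exhibit the same optimal coupling, but yours avoids the delicate generalized-inverse bookkeeping of the paper's Step~1 and does not need to cite the Euclidean result at all --- it re-derives it as the special case where $\mu$ is Lebesgue measure. The only caveats, namely the Tonelli exchanges when $\mu$ is not $\sigma$-finite and the convention making $\{F_i^{-1}(U)\le t\}=\{U\le F_i(t)\}$ exact, are correctly flagged by you and are no more serious than the implicit assumption $\mu(-\infty,x)<\infty$ already needed for the paper's map $f$ to be real-valued.
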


\begin{proof}[Proof of Lemma~\ref{lem:vallender}]
 Let $U$ be a random variable with uniform law on $(0,1)$ and define $X_1=F_1^{-1}(U)$ and $X_2=F_2^{-1}(U)$ where 
 \begin{align*}
 F_i^{-1}(p)=\min\{x\in\R,\ s.t.\ F_i(x)\geq p\},\ \forall p\in(0,1),\ i\in\{1,2\}.
 \end{align*}
 It is well known that the laws of $X_1$ and $X_2$ are $m_1$ and $m_2$
 respectively.  We consider the left-continuous non-decreasing
 function $f: x\mapsto \mu(-\infty,x)$ and define the random variables
 $Y_1=f(X_1)$ and $Y_2=f(X_2)$. We denote by $G_1$ and $G_2$ their
 respective cumulative distribution functions, and our first aim is to
 prove (in Step 1) that
\begin{align*}
G_i^{-1}(p)=f\circ F_i^{-1}(p)\ \forall p\in(0,1),\ i\in\{1,2\}.
\end{align*}
We conclude the proof of the lemma in Step~2, using a well known explicit expression for the Wasserstein distance between the laws of $Y_1$ and $Y_2$ when the underlying distance if the euclidean one.

\medskip\noindent
\textit{Step~1.} Fix $p\in(0,1)$ and $i\in\{1,2\}$ and let us prove that $G_i^{-1}(p)=f\circ F_i^{-1}(p)$.

We set
\begin{align*}
y_0:=G^{-1}_i(p)&=\min\left\{y\in\R\text{ such that }\P\left(f(F_i^{-1}(U))\leq y\right)\geq p\right\}.
\end{align*}
Since $p>0$, there exists $x\in\R$ such that $f(x)\leq y_0$, and, since $f$ is left continuous and non-decrea\-sing, we deduce that there exists  $x_0\in\R$ which is the largest number such that $f(x_0)\leq y_0 $.  Since, by definition of $x_0$, for all $x\in\R$, $f(x)\notin (f(x_0),y_0]$, we also observe that $$p\leq \P\left(f(F_i^{-1}(U))\leq y_0\right)=\P\left(f(F_i^{-1}(U))\leq  f(x_0)\right),$$ so that $y_0\leq f(x_0)$ by definition of $y_0$. Finally, we deduce that $y_0=f(x_0)$.

The definition of $x_0$ also entails that, for all $z\in \R$, $f(z)\leq y_0\Leftrightarrow z\leq x_0$. As a consequence,
\begin{align*}
p\leq \P\left(f(F_i^{-1}(U))\leq y_0\right)=\P(F_i^{-1}(U)\leq x_0)=F_i(x_0).
\end{align*}
We deduce that $x_0\geq F_i^{-1}(p)$ and hence that $G_i^{-1}(p)=f(x_0)\geq f\circ F_i^{-1}(p)$. 

Now, setting $x_0'= F_i^{-1}(p)$, we have $\P(F_i^{-1}(U)\leq x_0')=F_i(x_0')\geq p$ and hence $\P(f(F_i^{-1}(U))\leq f(x_0'))\geq p$ since $f$ is non-decreasing. This implies that $ f\circ F_i^{-1}(p)=f(x_0')\geq G_i^{-1}(p)$.

This concludes Step~1 of the proof.

\medskip\noindent\textit{Step 2.} Let us now conclude the proof of the lemma.

Denoting by $|\cdot|$ the euclidean distance on $\R$ and by $\cW_{|\cdot|}$ the corresponding Wasserstein distance, we obtain using~\cite{Vallender1973} (see also the Addendum by the same author in 1980 and references therein, see also~\cite{DallAglio1956} for an anterior look at the problem) that
\begin{align*}
\cW_{|\cdot|}\left(Law(Y_1),Law(Y_2)\right)
&=\E|Y_1-Y_2|=\int_\R \left|G_1(y)-G_2(y)\right|\,dy\\
&=\int_{\R_+} \left|G_1(y)-G_2(y)\right|\,dy,
\end{align*}
since $Y_1\geq 0$ and $Y_2\geq 0$ almost surely.
But, setting $g:y\mapsto \max\{x\in\R,\ f(x)\leq y\}$, we have, for all $y\in\R_+$ and $i\in\{1,2\}$,
\begin{align*}
G_i(y)=\P(f(X_i)\leq y)=\P(X_i\leq g(y))=F_i\circ g(y).
\end{align*}
Hence
\begin{align}
\label{eq:lemVallenderFin}
\E(d(X_1,X_2))=\E|Y_1-Y_2|=\int_{\R_+}  \left|F_1-F_2\right|\circ g(y)\,dy=\int_\R |F_1(t)-F_2(t)|\,\mu(dt).
\end{align}
In order to verify the last equality, one simply checks that, for all
$t\in\R$, the integral of the function $x\mapsto \11_{x < t}$ with
respect to the measure $A\mapsto \int_{\R_+} \11_A\circ g(y)\,dy$ is
\begin{align*}
\int_{\R_+} \11_{g(y)< t}\,dy=\int_{\R_+} \11_{y<f(t)}\,dy=f(t)=\int_\R \11_{x < t}\,\mu(dx).
\end{align*}
On the one hand, we deduce from~\eqref{eq:lemVallenderFin} that
\begin{align*}
\cW_d(m_1,m_2)\leq \int_\R |F_1(t)-F_2(t)|\,\mu(dt).
\end{align*}
On the other hand, for any coupling $(X'_1,X'_2)$ with marginal laws
$m_1$ and $m_2$, the coupling $(Y'_1,Y'_2)=(f(X'_1),f(X'_2))$ is a
coupling with the same marginal laws as $Y_1$ and $Y_2$. As a
consequence,
\begin{align*}
\E(d(X'_1,X'_2))=\E|Y'_1-Y'_2|\geq \cW_d(Law(Y_1),Law(Y_2))=\E|Y_1-Y_2|=\E(d(X_1,X_2)).
\end{align*}
This and Equation~\eqref{eq:lemVallenderFin} entail that 
\begin{align*}
\cW_d(m_1,m_2)= \int_\R |F_1(t)-F_2(t)|\,\mu(dt)
\end{align*}
and that the law of $(X_1,X_2)$ realizes the minimum in the definition of $\cW_d(m_1,m_2)$.
\end{proof}

\section{A model of interacting agents}
\label{sec:inter-agents}
In this section, the set $E$ is the complete graph of size $\# E\geq 3$ endowed with the distance $d(x,y)=\11_{x\neq y}$. In particular, the Wasserstein distance associated to $d$ equals half the total variation distance.

Fix $N\geq 2$ and consider the particle system described in the introduction, where each particle represents an agent's choice in the complete graph $E$. We recall that this model can be written in the settings of Theorem~\ref{thm:main}, by setting, for all $x,y\in E$,
\begin{align*}
F_i(x,\bar{x},\{y\})=\frac{T}{\# E}+f\left(\frac{\sum_{i=1}^N \11_{x_i=y}}{N}\right),\ \forall y\in E,
\end{align*}
where $f:[0,1]\rightarrow\R_+$ is a non-negative function and $T$ is a fixed constant (called the temperature of the system). Note that this process is exponentially ergodic, and that the marginal of its empirical stationary distribution is the uniform probability measure on $E$ (this is an immediate consequence of the symmetry of the state space and of the dynamic of the particles).

The following results are proved at the end of this section. In this first proposition, we assume that $f$ is Lipschitz and provide a  coarse Ricci curvature's lower bound that does not depend on $N$.
\begin{prop}
\label{prop:inter-agents-1}
Assume that $f$ is a Lipschitz function and define the Lipschitz constant of $f$ as $\|f\|_{Lip}=\sup_{u\neq v\in[0,1]} |f(u)-f(v)|/|u-v|$. Then the coarse Ricci curvature $\sigma$ of the particle system described above satisfies
\begin{align*}
\sigma\geq  T-{\color{black} 2\,}\|f\|_{Lip}+\inf_\mu \sum_{x\in E} f(\mu(x)),
\end{align*}
where the infimum is taken over the probability measures $\mu$ on $E$.  Moreover, if $f$ is monotone, then
\begin{align*}
\sigma\geq T-\|f\|_{Lip}+\inf_\mu \sum_{x\in E} f(\mu(x)).
\end{align*}
\end{prop}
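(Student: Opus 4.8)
The plan is to apply Theorem~\ref{thm:main} directly. For any two configurations $\bar x,\bar y\in E^N$, I need to bound from above
$$
\frac{\frac1N\sum_{i=1}^N J_d^{x_i,y_i}\bigl(F_i(x_i,\bar x,\cdot),F_i(y_i,\bar y,\cdot)\bigr)}{\bard(\bar x,\bar y)},
$$
and the negative of the supremum over $\bar x\neq\bar y$ is the lower bound on $\sigma$. Since $d$ is the trivial distance, the coordinates $i$ with $x_i=y_i$ contribute a term $J_d^{x_i,x_i}(\cdot,\cdot)$ which is $\le 0$ (it equals $\cW_d(m_1+m_2(E)\delta_{x_i},m_2+m_1(E)\delta_{x_i})-(m_1(E)+m_2(E))\cdot 0$, a Wasserstein distance, hence nonnegative — wait, I must be careful about the sign). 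Actually for $x_i=y_i$ we get $J_d^{x_i,x_i}\ge 0$, so those terms do not obviously help; but the point is that $\bard(\bar x,\bar y)=\frac1N\#\{i:x_i\neq y_i\}$, and for the coordinates with $x_i=y_i$ the two jump measures $F_i(x_i,\bar x,\cdot)$ and $F_i(x_i,\bar y,\cdot)$ are close (they differ only through the empirical measures of $\bar x$ and $\bar y$, which are close). So I should handle diagonal and off-diagonal coordinates separately.

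The key computation is a per-coordinate bound on $J_d^{x_i,y_i}$. Write $\bar x$ and $\bar y$ for the empirical occupation measures (with $\mu_{\bar x}(z)=\frac1N\sum_j\11_{x_j=z}$), and note $F_i(x_i,\bar x,\{z\})=\frac{T}{\#E}+f(\mu_{\bar x}(z))=:g_{\bar x}(z)$, which does not actually depend on $i$. For a diagonal coordinate ($x_i=y_i=:x$), I would use the total-variation/classical-coupling estimate (Proposition~\ref{prop:classical-coupling}, or better the computation in Example~\ref{exa:tv-N-equal1}) to get $J_d^{x,x}(g_{\bar x},g_{\bar y})\le \frac12\|g_{\bar x}-g_{\bar y}\|_{TV}\cdot(\text{something}) $; more precisely, since both measures have an atom at $x$ that cancels, $J_d^{x,x}(g_{\bar x},g_{\bar y})\le \sum_{z}|g_{\bar x}(z)-g_{\bar y}(z)|=\sum_z|f(\mu_{\bar x}(z))-f(\mu_{\bar y}(z))|\le\|f\|_{Lip}\sum_z|\mu_{\bar x}(z)-\mu_{\bar y}(z)|=\|f\|_{Lip}\,\|\mu_{\bar x}-\mu_{\bar y}\|_{TV}\le 2\|f\|_{Lip}\,\bard(\bar x,\bar y)$ (each coordinate where $x_j\ne y_j$ moves total mass at most $2/N$ in the empirical measure). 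For an off-diagonal coordinate ($x_i=x\neq y=y_i$), I would use the explicit formula from Example~\ref{exa:tv-N-equal1}: $J_d^{x,y}(g_{\bar x},g_{\bar y}) = -\sum_z g_{\bar x}(z)\wedge g_{\bar y}(z) - g_{\bar y}(x) - g_{\bar x}(y)\le -\sum_z g_{\bar x}(z)\wedge g_{\bar y}(z)$, and then bound $\sum_z g_{\bar x}(z)\wedge g_{\bar y}(z)\ge \bigl(\sum_z g_{\bar y}(z)\bigr) - \|g_{\bar x}-g_{\bar y}\|_{TV}\ge \bigl(T+\sum_z f(\mu_{\bar y}(z))\bigr) - \|f\|_{Lip}\|\mu_{\bar x}-\mu_{\bar y}\|_{TV}$. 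Hmm, I need to be a little more careful: I want the final bound to read $T-2\|f\|_{Lip}+\inf_\mu\sum_x f(\mu(x))$, so I should bound $\sum_z g_{\bar x}(z)\wedge g_{\bar y}(z)$ from below by $T+\inf_\mu\sum_x f(\mu(x))$ minus a correction that, when divided by $\bard(\bar x,\bar y)$, gives the $-2\|f\|_{Lip}$; this is where I'd want to be careful about whether the $\|f\|_{Lip}$ correction is needed on off-diagonal coordinates too, or whether using $\sum_z g_{\bar x}(z)\wedge g_{\bar y}(z)\ge \sum_z\bigl(\frac{T}{\#E}+f(\mu_{\bar x}(z))\wedge f(\mu_{\bar y}(z))\bigr)$ combined with $f(a)\wedge f(b)\ge f\bigl(\tfrac{a+b}{2}\bigr)-\tfrac12\|f\|_{Lip}|a-b|$ (in the monotone case, $f(a)\wedge f(b)=f(a\wedge b)\ge f\bigl(\tfrac{a+b}2\bigr)-\tfrac12\|f\|_{Lip}|a-b|$ still, but one can do better) lets the $\inf_\mu$ appear cleanly and isolates exactly the Lipschitz correction.

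Assembling: summing over $i$, the off-diagonal coordinates (there are $N\bard$ of them) each contribute at most $-T-\inf_\mu\sum_x f(\mu(x))+(\text{Lip correction})$, and the diagonal coordinates each contribute at most $\|f\|_{Lip}\|\mu_{\bar x}-\mu_{\bar y}\|_{TV}$; dividing the whole $\frac1N\sum_i(\cdots)$ by $\bard(\bar x,\bar y)$ and taking the worst case gives $\sigma\ge T-2\|f\|_{Lip}+\inf_\mu\sum_x f(\mu(x))$. For the monotone improvement, the gain comes from the off-diagonal estimate: when $f$ is monotone one has $f(\mu_{\bar x}(z))\wedge f(\mu_{\bar y}(z))=f(\mu_{\bar x}(z)\wedge\mu_{\bar y}(z))$ and $\sum_z|\mu_{\bar x}(z)\wedge\mu_{\bar y}(z)-\mu(z)|$ can be controlled more tightly — essentially because monotonicity lets the minimum measure itself be (a rescaling of) a probability measure close to one of the empirical measures, saving a factor of two in the Lipschitz correction. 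The main obstacle I expect is the bookkeeping that keeps the constant exactly $2$ (resp.\ $1$ in the monotone case): one must track precisely how much the empirical occupation measure moves per changed coordinate (mass $1/N$ leaves one site and mass $1/N$ arrives at another, so $\|\mu_{\bar x}-\mu_{\bar y}\|_{TV}\le 2\bard(\bar x,\bar y)$, with equality attainable), and one must be sure the diagonal terms really are controlled by $\|\mu_{\bar x}-\mu_{\bar y}\|_{TV}$ rather than something larger. A secondary subtlety is checking the sign conventions in $J_d^{x,y}$ so that the nonpositive off-diagonal contribution $-\sum_z g_{\bar x}(z)\wedge g_{\bar y}(z)$ is what drives the positivity of the curvature bound, exactly as in Example~\ref{exa:tv-N-equal1} with $N=1$.
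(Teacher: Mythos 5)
Your treatment of the first (general Lipschitz) bound is correct and follows essentially the same route as the paper: apply Theorem~\ref{thm:main}, compute $\cW_d$ as half the total variation, and use $\sum_z|\mu_{\bar x}(z)-\mu_{\bar y}(z)|\leq 2\,d(\bar x,\bar y)$. (One small slip: the identity you quote from Example~\ref{exa:tv-N-equal1} assumes no self-atoms, whereas here $g_{\bar x}(x)>0$; the correct identity is $J_d^{x,y}(g_{\bar x},g_{\bar y})=-\sum_{z\notin\{x,y\}}g_{\bar x}(z)\wedge g_{\bar y}(z)-g_{\bar y}(x)-g_{\bar x}(y)$, but the inequality $J_d^{x,y}\leq -\sum_z g_{\bar x}(z)\wedge g_{\bar y}(z)$ that you actually use still follows, so the constant $2$ comes out right after summing over diagonal and off-diagonal coordinates.)

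The monotone improvement, however, has a genuine gap. You attribute the saving of the factor $2$ entirely to the off-diagonal coordinates, but in your decomposition the diagonal coordinates are bounded by $J_d^{x,x}(g_{\bar x},g_{\bar y})\leq\sum_z|g_{\bar x}(z)-g_{\bar y}(z)|\leq\|f\|_{Lip}\sum_z|\mu_{\bar x}(z)-\mu_{\bar y}(z)|\leq 2\|f\|_{Lip}\,d(\bar x,\bar y)$, and monotonicity of $f$ cannot reduce a sum of absolute values. Since diagonal coordinates make up a fraction $1-d(\bar x,\bar y)$ of all coordinates, the assembled bound becomes $d(\bar x,\bar y)\bigl(-T-\inf_\mu\sum_x f(\mu(x))\bigr)+\|f\|_{Lip}\,d(\bar x,\bar y)^2+2\|f\|_{Lip}(1-d(\bar x,\bar y))\,d(\bar x,\bar y)$, which does \emph{not} yield $\sigma\geq T-\|f\|_{Lip}+\inf_\mu\sum_x f(\mu(x))$ when $d(\bar x,\bar y)$ is small. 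The paper avoids this by keeping the \emph{signed} structure on every coordinate: under the normalization $F_i(x_i,\bar x,E)\geq F_i(y_i,\bar y,E)$ one gets, for diagonal as well as off-diagonal coordinates, the bound $\sum_z\bigl(f(\mu_{\bar x}(z))-f(\mu_{\bar y}(z))\bigr)_+$ rather than $\sum_z|f(\mu_{\bar x}(z))-f(\mu_{\bar y}(z))|$; for monotone $f$ this is at most $\|f\|_{Lip}\sum_z(\mu_{\bar x}(z)-\mu_{\bar y}(z))_+=\tfrac12\|f\|_{Lip}\sum_z|\mu_{\bar x}(z)-\mu_{\bar y}(z)|\leq\|f\|_{Lip}\,d(\bar x,\bar y)$, using that $\mu_{\bar x}-\mu_{\bar y}$ has total mass zero. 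Your off-diagonal estimate can be repaired the same way (write $\sum_z g_{\bar x}\wedge g_{\bar y}=\sum_z g_{\bar y}(z)-\sum_z(g_{\bar y}(z)-g_{\bar x}(z))_+$ instead of subtracting the full $\ell^1$ difference), but you must also redo the diagonal estimate with the positive part, which requires tracking the sign of the mass correction $(g_{\bar y}(E)-g_{\bar x}(E))\delta_x$ rather than discarding it into an absolute value.
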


In the next proposition, we assume that $f$ is a non-decreasing
strictly convex function and show that, for small values of $T$, the
process exhibits a meta-stable state, so that the agents have a herd
behavior for large values of $N$ : if all the agents start with the
same choice $x\in E$, then, during a time of order $\exp(cN)$, for
some constant $c>0$, $x$ is favored by the majority of the
agents. Note that this is true despite the fact that, during this very
same interval of time, the vast majority of the agents have changed
their choices at multiple times.

\begin{prop}
\label{prop:inter-agents-2}
Assume that $f$ is a strictly convex function such that $f(0)=0$, let $z_*\in(1/2,1)$ such that
\begin{align*}
z_*=\argmax_{z\in [1/2,1]} f(z)-z(f(z)+f(1-z))
\end{align*} 
and set
\begin{align*}
m_*=f(z_*)-z_*(f(z_*)+f(1-z_*))>0.
\end{align*}
If the temperature is sufficiently small, namely if
$$
0\leq T < \frac{m_*\# E}{z_*\# E-1},
$$
then there exists a positive constant $\delta>0$ such that, for all $x\in E$,
\begin{align*}
-\frac{1}{N}\log  \P\left(\exists s\in[0,t],\ \mu^N_s(x)\leq  z^*\right)  =_{N\rightarrow+\infty} \mathcal{O}\left(\min\left(\delta(\mu_0^N(x)-z_*)_+^2,\delta\bar{\varepsilon}^2-\frac{\log t}{N}\right)\right),
\end{align*}
uniformly in $t\geq 0$ and where $\mu^N_s=\frac{1}{N}\sum_{i=1}^N \delta_{X^i_s}$.
\end{prop}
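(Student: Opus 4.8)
The plan is to pass to the empirical measure process $\mu^N_t=\frac1N\sum_{i=1}^N\delta_{X^i_t}$, which is a Markov jump process on the simplex $\cP(E)$: from a state $\mu$, the transition $\mu\mapsto\mu+\frac1N(\delta_b-\delta_a)$ occurs at rate $N\mu(a)\bigl(\tfrac T{\# E}+f(\mu(b))\bigr)$. This is a finite‑state mean‑field particle system of the type treated in~\cite{DupuisRamananEtAl2016} (see also~\cite{DupuisEllisEtAl1991}), so on $D([0,t],\cP(E))$ the laws of $\mu^N$ satisfy a large deviation principle at speed $N$ with good rate function $I_{[0,t]}(\varphi)=\int_0^t L(\varphi_s,\dot\varphi_s)\,ds$, where $L$ is the usual Cram\'er local rate function built from the jump directions $\delta_b-\delta_a$ and the rates $\varphi(a)(\tfrac T{\# E}+f(\varphi(b)))$; the corresponding fluid limit is $\dot\varphi(x)=\tfrac T{\# E}-T\varphi(x)+f(\varphi(x))-\varphi(x)\sum_{y}f(\varphi(y))$. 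The key structural remark is that, $f$ being convex with $f(0)=0$, it is superadditive, so at a fixed value $\varphi(x)=z$ one has $\sum_{y\neq x}f(\varphi(y))\le f(1-z)$ with equality exactly when all the remaining mass sits on a single site; hence $\dot\varphi(x)$, at fixed $\varphi(x)=z$, is \emph{minimised} on that two‑site configuration, where it equals $\psi(z):=\tfrac T{\# E}-Tz+f(z)-z\bigl(f(z)+f(1-z)\bigr)$. Writing $h(z)=f(z)/z$ (non‑decreasing, strictly so by strict convexity) gives $f(z)-z(f(z)+f(1-z))=z(1-z)\bigl(h(z)-h(1-z)\bigr)>0$ on $(\tfrac12,1)$, which explains why $z_*\in(\tfrac12,1)$ and $m_*>0$, and the hypothesis $T<\tfrac{m_*\# E}{z_*\# E-1}$ is precisely $\psi(z_*)>0$. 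By continuity one then fixes $c_0>0$ and $\bar\varepsilon\in(0,1-z_*)$ with $\psi\ge c_0$ on $[z_*,z_*+\bar\varepsilon]$ (in particular the herd equilibrium lies strictly above $z_*+\bar\varepsilon$).

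The heart of the proof is a lower bound, \emph{uniform in the time horizon}, on the rate‑function cost of any path bringing $\mu^N(x)$ down to $z_*$. Applying the contraction principle to the coordinate $\varphi\mapsto\varphi(x)$ — whose dynamics under $\mu^N$ is birth/death‑like, with up‑rate governed by $R_{\mathrm{in}}(\varphi)=(1-\varphi(x))(\tfrac T{\# E}+f(\varphi(x)))$, a function of $\varphi(x)$ alone, and down‑rate $R_{\mathrm{out}}(\varphi)=\varphi(x)\bigl((\# E-1)\tfrac T{\# E}+\sum_{y\neq x}f(\varphi(y))\bigr)$, squeezed between explicit functions of $\varphi(x)$ — one gets $L(\varphi,\theta)\ge\ell(\varphi(x),\theta(x))$ with $\ell$ the one‑dimensional rate function and $\ell(z,w)\gtrsim(w-\psi(z))^2$ near $w=\psi(z)$ (super‑linearly in $|w|$ otherwise), uniformly for $z$ near $z_*$ since the rates are bounded on $\cP(E)$. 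Because $\psi\ge c_0$ on $[z_*,z_*+\bar\varepsilon]$, a Cauchy--Schwarz / AM--GM estimate over the time spent by $\varphi(x)$ in this window shows that any $\varphi$ with $\varphi_0(x)=\mu^N_0(x)$ and $\inf_s\varphi_s(x)\le z_*$ satisfies $I_{[0,T]}(\varphi)\ge\delta\,\min\bigl((\mu^N_0(x)-z_*)_+^2,\bar\varepsilon^2\bigr)$ for a $\delta>0$ independent of $T$. Feeding this into the Freidlin--Wentzell‑type exit estimate obtained from the LDP upper bound — cutting $[0,t]$ into $\sim t$ slices of length $O(1)$, applying the upper bound on each slice, and using the Markov property plus a union bound to absorb the $\tfrac{\log t}N$ term — yields $-\tfrac1N\log\P(\exists s\le t:\ \mu^N_s(x)\le z_*)\ge\min\bigl(\delta(\mu^N_0(x)-z_*)_+^2,\ \delta\bar\varepsilon^2-\tfrac{\log t}N\bigr)+o(1)$; the matching upper bound (hence the stated order) follows from the LDP lower bound along an explicit quasi‑optimal path, which either drives $\mu^N(x)$ straight from $\mu^N_0(x)$ to $z_*$, or first relaxes $\mu^N$ to the herd equilibrium and then, in one of the $\sim t$ available windows, pushes mass off $x$ \emph{along the two‑site sub‑manifold} — (near‑)optimal precisely by the superadditivity remark, which makes that manifold both minimise the opposing drift and maximise the down‑rate.

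The variational part above is essentially elementary; the main obstacle is to make the large deviation input genuinely uniform — one needs the LDP of~\cite{DupuisRamananEtAl2016} in a form controlling $\P(\exists s\le t:\mu^N_s(x)\le z_*)$ uniformly in $t\ge0$ and in the initial condition (with $\mu^N_0(x)$ ranging over $(z_*,1]$), which requires careful localisation in time, tightness of $\mu^N$ on each $O(1)$ window, lower‑semicontinuity of $I_{[0,T]}$ up to the boundary set $\{\varphi(x)=z_*\}$, and control of the exit from a neighbourhood of the herd equilibrium; the two competing regimes in the $\min$ and the $\tfrac{\log t}N$ correction then fall out of the slicing. A minor preliminary point is that $f$ is only assumed convex, so one first reduces to $f$ continuous on $[0,1]$ — automatic on $[0,1)$, and the value $\varphi(y)=1$ plays no role in the estimates — so that the jump rates are continuous and the rate function of~\cite{DupuisRamananEtAl2016} applies directly.
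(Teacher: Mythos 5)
Your proposal follows essentially the same route as the paper's proof: the large deviation upper bound of Dupuis--Ellis--Weiss / Dupuis--Ramanan--Wu for the empirical measure, the superadditivity of the convex $f$ with $f(0)=0$ to get a drift of the $x$-coordinate bounded below by a positive constant on $[z_*,z_*+\varepsilon^*]$ (with the hypothesis on $T$ being exactly positivity at $z_*$), a uniform quadratic lower bound on the local rate function, a Cauchy--Schwarz estimate on the cost of any descending path, and the renewal/slicing argument over unit time windows producing the $\frac{\log t}{N}$ correction. The only substantive difference is that you additionally sketch a matching bound via the LDP lower bound along a quasi-optimal two-site path, which the paper does not attempt (its proof only establishes that the probability is exponentially small, i.e.\ the lower bound on $-\frac1N\log\P$); this extra piece is left at the level of a plausible sketch but is not needed to reproduce what the paper actually proves.
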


In order to check that $m_*>0$ in the above result, one simply uses the fact that $f$ is strictly convex with $f(0)=0$, so that, for all $z\in(1/2,1)$, $f(1-z)/(1-z)<f(z)/z$.

\begin{exa}
  Assume that $f$ is an affine function : $f(x)=ax+b$ for some $a\in \R$ and $b\geq 0$ such that $a+b\geq 0$. Then $f$ is Lipschitz with $\|f\|_{Lip}=|a|$ and $\sum_{x\in E} f(\mu(x))=a+b\,\# E$ for any probability measure $\mu$ on $E$. Hence Proposition~\ref{prop:inter-agents-1} implies that the Wasserstein curvature of the process is bounded {\color{black} from} below by  $T+b\,\# E+a-|a|$. In particular, it is positive since
\begin{align*}
T+b\,\# E+a-|a|{\color{black} =\,}
\begin{cases}
T+b\# E>0&\text{if }a\geq 0,\\
T+b(\# E-2)+2(b+a)>0&\text{if }a<0,
\end{cases}
\end{align*}
 and hence the system of agents does not exhibit a herd behavior.
\end{exa}

\begin{exa}
Assume that $f(x)=x^2$. Then $\|f\|_{Lip}=2$ and 
\begin{align*}
\inf_{\mu} \sum_{x\in E}f(\mu(x))=\frac{1}{\# E},
\end{align*}
Moreover,
\begin{align*}
z_*=\argmax_{z\in[1/2,1]} z^2-z(z^2+(1-z)^2)=\frac{1}{2}+\frac{1}{\sqrt{12}}
\end{align*}
and
\begin{align*}
m_*=z_*^2-z_*(z_*^2+(1-z_*)^2)=\frac{1}{6\sqrt{3}}.
\end{align*}
Hence we deduce from Proposition~\ref{prop:inter-agents-1} and Proposition~\ref{prop:inter-agents-2} that
\begin{itemize}
\item if $T>{\color{black} 2\,}-1/\# E$, then the Wasserstein curvature
  of the particle system is positive (bounded {\color{black} from} below by
  $T-{\color{black} 2\,}+1/\# E$) and the system of agents does not
  exhibits a herd behavior;
\item if $0\leq T<\# E/((3+3\sqrt{3})\# E -6\sqrt{3})$, then the
  system of agents exhibits a herd behavior.
\end{itemize}
\end{exa}

\begin{proof}[Proof of Proposition~\ref{prop:inter-agents-1}]
Fix $i\in\{1,\ldots,N\}$ and $\bar{x},\bar{y}\in E^N$. We set $\mu_{\bar{x}}=\frac{1}{N}\sum_{j=1}^N\delta_{x_j}$ and  $\mu_{\bar{y}}=\frac{1}{N}\sum_{j=1}^N\delta_{y_j}$.  We assume, without loss of generality, that $F_i(x_i,\bar{x},E)\geq F_i(y_i,\bar{y},E)$. If $x_i\neq y_i$, one has
{\color{black}\begin{align*}
 \cW_d&\left(F_i(x_i,\bar{x},\cdot)+F_i(y_i,\bar{y},E)\delta_{x_i},F_i(y_i,\bar{y},\cdot)+F_i(x_i,\bar{x},E)\delta_{y_i}\right)\\
 &\quad= \frac{1}{2}\sum_{g\in E} \left|f\left(\mu_{\bar{x}}(g)\right)+\11_{g=x_i}\left(T+\sum_{h\in E} f\left(\mu_{\bar{y}}(h)\right)\right) -f\left(\mu_{\bar{y}}(g)\right)-\11_{g=y_i}\left(T+\sum_{h\in E} f\left(\mu_{\bar{x}}(h)\right)\right)\right|\\
&\quad\leq \frac{1}{2}\sum_{g\in E} \left|f\left(\mu_{\bar{x}}(g)\right)-f\left(\mu_{\bar{y}}(g)\right)\right|+\frac{1}{2}\sum_{g\in E} \left(f\left(\mu_{\bar{x}}(g)\right)-f\left(\mu_{\bar{y}}(g)\right)\right)+T+\sum_{g\in E} f\left(\mu_{\bar{y}}(g)\right).
\end{align*}}
If $x_i=y_i$, then 
\begin{align*}
 \cW_d&\left(F_i(x_i,\bar{x},\cdot)+F_i(y_i,\bar{y},E)\delta_{x_i},F_i(y_i,\bar{y},\cdot)+F_i(x_i,\bar{x},E)\delta_{y_i}\right)\\
 &\quad=  \frac{1}{2}\sum_{g\in E} \left|f\left(\mu_{\bar{x}}(g)\right)+\11_{g=x_i}\sum_{h\in E} f\left(\mu_{\bar{y}}(h)\right) -f\left(\mu_{\bar{y}}(g)\right)-\11_{g=y_i}\sum_{h\in E} f\left(\mu_{\bar{x}}(h)\right)\right|\\
&\quad\leq \frac{1}{2}\sum_{g\in E} \left|f\left(\mu_{\bar{x}}(g)\right)-f\left(\mu_{\bar{y}}(g)\right)\right|+\frac{1}{2}\sum_{g\in E} \left(f\left(\mu_{\bar{x}}(g)\right)-f\left(\mu_{\bar{y}}(g)\right)\right).
\end{align*}
In both expressions, we have
\begin{align}
\frac{1}{2}\sum_{g\in E} &\left|f\left(\mu_{\bar{x}}(g)\right)-f\left(\mu_{\bar{y}}(g)\right)\right|+\frac{1}{2}\sum_{g\in E} \left(f\left(\mu_{\bar{x}}(g)\right)-f\left(\mu_{\bar{y}}(g)\right)\right)\nonumber\\
&\quad\quad\quad\quad\quad\quad=\sum_{g\in E} \left(f\left(\mu_{\bar{x}}(g)\right)-f\left(\mu_{\bar{y}}(g)\right)\right)_+\nonumber\\
&\quad\quad\quad\quad\quad\quad\leq \|f\|_{Lip}\sum_{g\in E} \left|\mu_{\bar{x}}(g)-\mu_{\bar{y}}(g)\right| \leq {\color{black} 2\,}\|f\|_{Lip}d(\bar{x},\bar{y}) 
\label{eq:2-f-lip}
\end{align}
and hence, since $d(x_i,y_i)=1$ in the case $x_i\neq y_i$ and $d(x_i,y_i)=0$ in the case $x_i=y_i$, we deduce that
  \begin{align*}
    J_d^{x_i,y_i}(F_i(x_i,\bar{x},\cdot),F_i(y_i,\bar{y},\cdot))
    &\leq -F_i(x_i,\bar{x},E)d(x_i,y_i)+{\color{black} 2\,}\|f\|_{Lip} d(\bar{x},\bar{y}).
\end{align*}
We deduce that
\begin{align*}
\frac{\frac{1}{N}\sum_{i=1}^N J_d^{x_i,y_i}(F_i(x_i,\bar{x},\cdot),F_i(y_i,\bar{y},\cdot))}{d(\bar{x},\bar{y})}\leq -\inf_{i,\bar{x},\bar{y}} F_i(x_i,\bar{x},E)+{\color{black} 2\,}\|f\|_{Lip}.
\end{align*}
Since $\inf_{i,\bar{x},\bar{y}} F_i(x_i,\bar{x},E)=T+\inf_{\mu}\sum_{x\in E} f(\mu(x))$, this concludes the first part of the proof of Proposition~\ref{prop:inter-agents-1}.

If $f$ is non-decreasing (and similarly if $f$ is decreasing), then one can replace the inequality~\eqref{eq:2-f-lip} by (we use the fact that $\sum_{g\in E} \mu_{\bar{x}}(g)-\mu_{\bar{y}}(g)=0$)
\begin{align*}
\sum_{g\in E} \left(f\left(\mu_{\bar{x}}(g)\right)-f\left(\mu_{\bar{y}}(g)\right)\right)_+&\leq \|f\|_{Lip}\sum_{g\in E} \left(\mu_{\bar{x}}(g)-\mu_{\bar{y}}(g)\right)_+\\
&=\frac{1}{2}\|f\|_{Lip}\sum_{g\in E} \left|\mu_{\bar{x}}(g)-\mu_{\bar{y}}(g)\right|\\
&\leq \|f\|_{Lip}d(\bar{x},\bar{y}),
\end{align*}
which, as above, allows to conclude the proof of the second part of Proposition~\ref{prop:inter-agents-1}.
\end{proof}

\begin{proof}[Proof of Proposition~\ref{prop:inter-agents-2}]
 The particle system is a mean-field particle system and hence his empirical measure process, defined as
\begin{align*}
\mu^N_t=\frac{1}{N}\sum_{i=1}^N\delta_{X^i_t}\quad\forall t\geq 0,
\end{align*}
is a Markov process evolving in the simplex of $\R^{\# E}$.

Denote by $(\mu_t)_{t\geq 0}$ the solution to the ODE
\begin{align*}
\frac{d\mu_t}{dt}=\mu_t\left(L_{\mu_t}\cdot\right),\ \mu_0=\mu^N_0(x),
\end{align*}
with $L_{\mu}$ defined as the following operator acting on functions $h:E\rightarrow\R$ 
\begin{align*}
L_{\mu}h(k)=\sum_{l\in E} (h(l)-h(k))\left(\frac{T}{\#E}+f(\mu(l))\right).
\end{align*}
Then $(\mu^N_t)_{t\geq 0}$ satisfies the following upper bound large deviation principle proved in
\cite{DupuisEllisEtAl1991}, where we use the fact that the state space is compact (we also refer the reader to the more recent
\cite{DupuisRamananEtAl2016} for more general mean-field interactions with multiple particles jumps) : for any closed
set $F\in D([0,t],\R^{\# E})$, and all $\eta>0$, there exists $N_\eta\geq 2$ such that, for all $n\geq N_\eta$,
\begin{align}
\label{eq:upper-bound}
 \frac{1}{N}\log\P_{\mu_0^N}\left(\left(\mu^N_s\right)_{s\in [0,t]}\in F\right)\leq -\inf_{\varphi\in F} I_{\mu_0^N}(\varphi)+\eta,
\end{align}
where $I_{\mu_0^N}(\varphi)=+\infty$ if $\varphi(0)\neq \mu^N_0$ or if $\varphi$ is not absolutely continuous, and, otherwise,
\begin{align*}
I_{\mu_0^N}(\varphi)=\int_0^t \ell\left(\varphi_s,\frac{\d\varphi_s}{\d s}\right)\,ds,
\end{align*}
with $\ell$ defined as
\begin{align*}
\ell(\mu,\nu)=\sup_{\alpha\in \R^{\# E}} \alpha\cdot\nu-\sum_{k\in E} \mu(k) \sum_{l\in E} \left(\frac{T}{\# E}+f(\mu(l))\right)\left(\exp\left(\alpha_l-\alpha_k\right)-1\right).
\end{align*}
In the following, we choose $t=1$ in the definition of $I_{\mu_0^N}$.

\textit{Step 1:} Our first aim is to prove that there exists a constant $C\geq 1$ such that
\begin{align}
\label{eq:step1}
\ell(\mu,\nu)&\geq \begin{cases}
\frac{\|\nu-\mu(L_\mu\cdot)\|^2}{4C}&\text{if }\|\nu-\mu(L_\mu\cdot)\|\leq 2C,\\
\|\nu-\mu(L_\mu\cdot)\|-C&\text{if }\|\nu-\mu(L_\mu\cdot)\|\geq 2C
\end{cases}\\
&\geq \frac{\|\nu-\mu(L_\mu\cdot)\|^2}{4C}\wedge \frac{\|\nu-\mu(L_\mu\cdot)\|}{2},
\end{align}
where $\|\cdot\|$ denotes the Euclidean norm.
The main difficulty is that we require $C$ to be independent of $\mu$. Otherwise, the property would be directly obtained from the fact that $l\left(\mu,\nu\right)$ is strictly convex in its second variable, which is a consequence of \cite[Theorem 12.2]{Rockafellar1970}, as stressed out by \cite[Lemma~7.2]{DupuisRamananEtAl2016}. 

In the case where $\nu=\mu(L_\mu\cdot)$, we have
\begin{align*}
\ell\left(\mu,\mu(L_\mu\cdot)\right)=\sup_{\alpha\in \R^{\# E}} \sum_{k\in E} \mu(k) \sum_{l\in E} \left(\frac{T}{\# E}+f(\mu(l))\right)\left(-\exp\left(\alpha_l-\alpha_k\right)+1+(\alpha_l-\alpha_k)\right)=0.
\end{align*}
Now, if $\nu=\mu(L_\mu\cdot)+\zeta$, then, using the fact that $(\exp(z)-1-z)/z^2$ is uniformly bounded over $z\in[-1,1]$,
\begin{align*}
\ell\left(\mu,\nu\right)&\geq  \sup_{\alpha\in\R^{\#E},\|\alpha\|\leq 1}\alpha\cdot\zeta-C\|\alpha\|^2
\end{align*}
where $C\geq 1$ is a constant that does not depend on $\mu,\nu$ nor $\alpha$. If $\|\zeta\|\leq 2C$, then one can choose $\alpha=\zeta/2C$ in order to obtain $\ell\left(\mu,\nu\right)\geq \frac{\|\zeta\|^2}{4C}$.
If $\|\zeta\|\geq 2C$, then one can choose $\alpha=\zeta/\|\zeta\|$ and obtain
$
\ell\left(\mu,\nu\right)\geq -C +\|\zeta\|.
$
Finally, we deduce that~\eqref{eq:step1} holds true.

\textit{Step 2:} Our aim is to prove that there exists $\varepsilon^*>0$ and $\bar{\varepsilon}\in(0,\varepsilon^*/2]$
such that $\mu(L_\mu\11_x)\geq \bar{\varepsilon}$ for all probability measure $\mu$ on $E$ such that
$\mu(x)\in[z^*,z^*+\varepsilon^*]$.

Let $\mu$ be a probability measure on $E$ such that $\mu(x)= z_*+\varepsilon$ for some $\varepsilon>0$. We have
\begin{align*}
\mu\left(L_\mu\11_{x}\right)&=\sum_{y\in E} \mu(y)\sum_{k\in E} \left(\11_{x}(k)-\11_{x}(y)\right)\left(\frac{T}{\#E}+f(\mu(k))\right)\\
&=\frac{T}{\# E}+f(\mu(x))-\mu(x)\left(T+\sum_{k\in g} f(\mu(k))\right)
\end{align*}
Since $f$ is convex with $f(0)=0$, we have
\begin{align*}
\sum_{k\in E} f(\mu(k))\leq f(\mu(x))+f(1-\mu(x)).
\end{align*}
Hence
\begin{align*}
\mu\left(L_\mu\11_{x}\right)&\geq \frac{T}{\# E}+f(\mu(x))-\mu(x)\left(T+f(\mu(x))+f(1-\mu(x))\right)\\
&=  \frac{T}{\# E}-(z_*+\varepsilon) T+f(z_*+\varepsilon)-(z_*+\varepsilon)(f(z_*+\varepsilon)+f(1-z_*-\varepsilon)).
\end{align*}
Now, since the right hand side of the above term is continuous in $\varepsilon$ and strictly positive when
$\varepsilon=0$ (by assumption on $T$), we conclude that there exists two positive constants $\varepsilon^*$ and
$\bar{\varepsilon}$ such that the above term is larger than $\bar{\varepsilon}$ for all
$\varepsilon\in[0,\varepsilon^*]$. Since one can assume without loss of generality that
$\bar{\varepsilon} \leq \varepsilon^*/2$, this concludes Step~2.

\textit{Step 3:} Let $\mu_0^N$ be such that $\mu_0^N(x)>z_*$. Using the results of the previous steps, we show that any function
$\varphi_\cdot\in D([0,1],\R^{\# E})$ with values in the simplex, such that $\varphi_0(x)>z^*$ and such that $\varphi_t(x)<z^*+t\bar{\varepsilon}$ for some $t\in[0,1]$,
satisfies
\begin{align*}
I_{\mu_0^N}(\varphi)\geq \delta(\mu_0^N(x)-z_*)^2
\end{align*}
for some constant $\delta>0$.
 
Consider $\varphi$ satisfying the above property. If $\varphi(x)\neq \mu_0^N(x)$ or if $\varphi$ is not absolutely continuous,
then $I_{\mu_0^N}(\varphi)=+\infty$ and the property is immediate. Otherwise, there exist two times $t_1<t_2\in[0,1]$,
such that $\varphi_{t_1}(x)=\varphi_0(x)\wedge (z^*+\bar{\varepsilon})+t_1\bar{\varepsilon}$, $\varphi_{t_2}(x)=z_*+t_2\bar{\varepsilon}$ and
$\varphi_s(x)\in[z^*,z^*+2\bar{\varepsilon}]$ for all $s\in[t_1,t_2]$. In particular, Step~1 entails
\begin{align*}
I(\varphi)&\geq \int_{t_1}^{t_2} \frac{\left\|\varphi_s(L_{\varphi_s}\cdot)-\frac{\d\varphi_s}{\d s}\right\|^2}{4C}\wedge \frac{\left\|\varphi_s(L_{\varphi_s}\cdot)-\frac{\d\varphi_s}{\d s}\right\|}{2}\,ds\\
&\geq \frac{1}{4C} \int_{t_1}^{t_2} \left|\varphi_s(L_{\varphi_s}\11_x)-\frac{\d\varphi_s}{\d s}(x)\right|^2\wedge \left|\varphi_s(L_{\varphi_s}\11_x)-\frac{\d\varphi_s}{\d s}(x)\right|\,\11_{\frac{\d\varphi_s}{\d s}(x)\leq \bar{\varepsilon}}ds\\
&\geq \frac{1}{4C}\int_{t_1}^{t_2} \left|\bar{\varepsilon}-\frac{\d\varphi_s}{\d s}(x)\right|^2\wedge \left|\bar{\varepsilon}-\frac{\d\varphi_s}{\d s}(x)\right|\,\11_{\frac{\d\varphi_s}{\d s}(x)\leq \bar{\varepsilon}}ds,
\end{align*}
since Step~2 implies that $\varphi_s(L_{\varphi_s}\11_x)\geq \bar{\varepsilon}$ for all $s\in[t_1,t_2]$. Setting 
\[
A=\left\{s\in[t_1,t_2],\ \text{s.t. }\left|\bar{\varepsilon}-\frac{\d\varphi_s}{\d s}(x)\right|^2\leq \left|\bar{\varepsilon}-\frac{\d\varphi_s}{\d s}(x)\right|\right\},
\]
and using Cauchy-Schwarz inequality, we obtain
\begin{align*}
  \int_{t_1}^{t_2} \left|\bar{\varepsilon}-\frac{\d\varphi_s}{\d s}(x)\right|^2\11_{A}(s)\,\11_{\frac{\d\varphi_s}{\d s}(x)\leq \bar{\varepsilon}}ds\geq \left(\int_{t_1}^{t_2} \left|\bar{\varepsilon}-\frac{\d\varphi_s}{\d s}(x)\right|\11_{A}(s)\,\11_{\frac{\d\varphi_s}{\d s}(x)\leq \bar{\varepsilon}}ds\right)^2/(t_2-t_1).
\end{align*}
But
\begin{multline*}
  \int_{t_1}^{t_2} \left|\bar{\varepsilon}-\frac{\d\varphi_s}{\d s}(x)\right|\11_{A}(s)\,\11_{\frac{\d\varphi_s}{\d s}(x)\leq \bar{\varepsilon}}ds+\int_{t_1}^{t_2} \left|\bar{\varepsilon}-\frac{\d\varphi_s}{\d s}(x)\right|\11_{A^c}(s)\,\11_{\frac{\d\varphi_s}{\d s}(x)\leq \bar{\varepsilon}}ds\\
  \geq \bar{\varepsilon}t_2-\varphi_{t_2}(x)-\bar{\varepsilon}t_1+\varphi_{t_1}(x)=\varphi_0(x)\wedge(z_*+\bar{\varepsilon})-z_*,
\end{multline*}
hence one of the two terms in the left hand side is larger than $\frac{(\varphi_0(x)-z_*)\wedge \bar{\varepsilon}}{2}$, so that
\begin{align*}
  \int_{t_1}^{t_2} \left|\bar{\varepsilon}-\frac{\d\varphi_s}{\d s}(x)\right|^2\wedge \left|\bar{\varepsilon}-\frac{\d\varphi_s}{\d s}(x)\right|\,\11_{\frac{\d\varphi_s}{\d s}(x)\leq \bar{\varepsilon}}ds
  &\geq \left(\frac{(\varphi_0(x)-z_*)\wedge \bar{\varepsilon}}{2\sqrt{t_2-t_1}}\right)^2\wedge\frac{(\varphi_0(x)-z_*)\wedge \bar{\varepsilon}}{2}\\
  &\geq \delta (\varphi_0(x)-z^*)^2
\end{align*}
for some constant $\delta>0$. This concludes Step~3.

\textit{Step~4.} We conclude the proof by a classical renewal argument. Using
Step~3, the deviation principle~\eqref{eq:upper-bound} and using the Markov property, one obtains
\begin{align*}
  \P\left(\exists s\in[0,1],\ \mu^N_s(x)\leq  z^*+s\bar{\varepsilon}\right)&\leq \exp\left(-N(\delta(\mu_0^N(x)-z_*)^2-\eta)\right)
\end{align*}
and, for all integer $t\geq 1$,
\begin{multline*}
  \P\left(\exists s\in[t,t+1],\ \mu^N_s(x)\leq
    z^*+(s-t)\bar{\varepsilon}\mid\, \forall \mu^N_t(x)\geq
    z^*+\bar{\varepsilon}\right) \leq
  \exp\left(-N(\delta\bar{\varepsilon}^2-\eta)\right).
\end{multline*}
Hence, for all $t> 0$,
\begin{multline*}
  \P\left(\exists s\in[0,t],\ \mu^N_s(x)\leq  z^*\right)\\
  \leq \exp\left(-N(\delta(\mu_0^N(x)-z_*)^2-\eta)\right)+(\lceil t\rceil -1)\exp\left(-N(\delta\bar{\varepsilon}^2-\eta)\right).
\end{multline*}
Since $\eta$ can be chosen arbitrarily small uniformly in $t$, taking the logarithm allow
us to conclude the proof of Proposition~\ref{prop:inter-agents-2}.

\end{proof}

\section{Application to other models}
\label{sec:other-models}

In this Section, we compute a lower bound for the coarse Ricci curvature of different interacting particle systems. In Subsection~\ref{sec:zero-range-dyn}, we consider zero range dynamics.  In Subsection~\ref{sec:Fleming-Viot}, we study the case of Fleming-Viot type systems and some natural extensions. In Subsection~\ref{sec:BD-mean-field}, we consider birth and death processes in mean-field type interaction. Finally, we conclude in Subsection~\ref{sec:with-density} with systems of particles whose jump measures admit a density with respect to the Lebesgue measure or the counting measure (we consider exponential laws on $\R$, Gaussian measures on $\R^d$ or finitely supported discrete measures on {\color{black} $\Z$}). For the sake of clarity, we chose $F=F_i$ independent of $i$, but the approach and most computations remain unchanged in the dependent case.

\subsection{Zero range dynamics}
\label{sec:zero-range-dyn}

Let $E$ be a finite {\color{black} or countable space equipped with the trivial distance (defined by
$d(x,y)=\11_{x\neq y}$ for all $x,y\in E$)} and let $(P_{xy})_{x,y\in E}$ be a stochastic
matrix and consider a particle system whose infinitesimal generator is
given by
\begin{align}
\label{eq:infinit-gen-zero-rng}
\color{black} \cL f(\bar{x})=\sum_{i=1}^N c_{x_i}(\bar{x})\sum_{y\in E} P_{xy}(f(\bar{x}+e_y-e_x)-f(x)),
\end{align}
where, for all $x\in E$, $c_x:E^N\rightarrow\R_+$ is a non-negative
function. The particles of this system jump with respect to the
transition probability $P_x$ at a rate $c_{x_i}(\bar{x})$.

In the following corollary, for all $x,y\in E$, $\theta_P(x,y)$ is the
coarse Ricci curvature (in discrete time) of the transition
probability matrix $P$ along $(xy)$, in the sense
of~\cite[Definition~3]{Ollivier2009}:
\begin{align}
\label{eq:CRC-discrete-time}
\theta_P(x,y)=1-\frac{\cW_d(P_x,P_y)}{d(x,y)}.
\end{align}
{\color{black} In our case, $d$ is the trivial distance,} $\theta_P(x,y)$ is thus
equal to $1-\frac{1}{2}\|P_x-P_y\|_{TV}$. We refer the reader
to~\cite{Ollivier2009,Ollivier2010}, where the author provides general
properties and explicit bounds of $\theta_P$ for several choices of
$P$. {\color{black}In the following result, we abusively write
  $x\in\bar x$ if $x$ is a coordinate of $\bar x$.}

\begin{cor}
\label{cor:zero-range}
The coarse Ricci curvature $\sigma$ of the particle system with infinitesimal generator $\cL$ given by~\eqref{eq:infinit-gen-zero-rng} satisfies
\begin{align*}
\sigma\geq \inf_{\color{black}\bar{x}\neq \bar{y}\in E^N,\ x\in\bar x,\,y\in\bar y} \theta_P(x,y) c_x(\bar{x})\wedge c_y(\bar{y})-\sup_{x\in E} (1-P_{xx})\|c_x\|_{Lip}.
\end{align*}
where $\|c_x\|_{Lip}:=\sup_{\color{black}\bar{x}\neq \bar{y}\in E^N\text{ s.t. }x\in \bar x\text{ and }x\in\bar y} \frac{|c_x(\bar{x})-c_x(\bar{y})|}{d(\bar{x},\bar{y})}$.
\end{cor}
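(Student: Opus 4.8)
The plan is to apply Theorem~\ref{thm:main} to the generator~\eqref{eq:infinit-gen-zero-rng}, which has the required form with $F_i(x_i,\bar x,\cdot)=c_{x_i}(\bar x)P_{x_i}$, where $P_x:=\sum_{y\in E}P_{xy}\delta_y$ is the $x$-th row of $P$ viewed as a probability measure on $E$ (the atom of $F_i$ at $x_i$ is irrelevant, since it contributes nothing to $\cL$). Writing $m:=\inf_{\bar x'\neq\bar y',\,x'\in\bar x',\,y'\in\bar y'}\theta_P(x',y')\,c_{x'}(\bar x')\wedge c_{y'}(\bar y')$ and $M:=\sup_{x\in E}(1-P_{xx})\|c_x\|_{Lip}$ for the two quantities appearing in the statement, it then suffices to prove that $\tfrac1N\sum_{i=1}^N J_d^{x_i,y_i}\!\left(c_{x_i}(\bar x)P_{x_i},c_{y_i}(\bar y)P_{y_i}\right)\leq(M-m)\,d(\bar x,\bar y)$ for every $\bar x\neq\bar y$, since Theorem~\ref{thm:main} then yields $\sigma\geq m-M$. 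Throughout I would use that $d$ is the trivial distance, so $\cW_d=\tfrac12\|\cdot\|_{TV}$ and, by~\eqref{eq:CRC-discrete-time}, $\cW_d(P_x,P_y)=(1-\theta_P(x,y))\,d(x,y)$; I would also use the symmetry $J_d^{x,y}(m_1,m_2)=J_d^{y,x}(m_2,m_1)$ (immediate from the definition and the symmetry of $\cW_d$), which allows me to assume $c_{x_i}(\bar x)\geq c_{y_i}(\bar y)$ at no cost.

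The heart of the argument is a mass-splitting of each summand. Setting $\alpha_i:=c_{x_i}(\bar x)\wedge c_{y_i}(\bar y)=c_{y_i}(\bar y)$, I would write
\begin{align*}
c_{x_i}(\bar x)P_{x_i}=\alpha_i P_{x_i}+\bigl(c_{x_i}(\bar x)-\alpha_i\bigr)P_{x_i},\qquad c_{y_i}(\bar y)P_{y_i}=\alpha_i P_{y_i}+0,
\end{align*}
and invoke the sub-additivity~\eqref{eq:J-prop1} and homogeneity~\eqref{eq:J-prop2} of $J_d^{x_i,y_i}$ to get
\begin{align*}
J_d^{x_i,y_i}\!\left(c_{x_i}(\bar x)P_{x_i},c_{y_i}(\bar y)P_{y_i}\right)\leq\alpha_i\,J_d^{x_i,y_i}(P_{x_i},P_{y_i})+\bigl(c_{x_i}(\bar x)-\alpha_i\bigr)\,J_d^{x_i,y_i}(P_{x_i},0).
\end{align*}
For the first (``common mass'') term I would use the probability-measure estimate recorded just before~\eqref{eq:alt-bound}, namely $J_d^{x_i,y_i}(P_{x_i},P_{y_i})\leq\cW_d(P_{x_i},P_{y_i})-d(x_i,y_i)$; this equals $0$ when $x_i=y_i$ and equals $-\theta_P(x_i,y_i)$ when $x_i\neq y_i$. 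For the second (``excess mass'') term I would use Proposition~\ref{prop:classical-coupling} with $m_2=0$, giving $J_d^{x_i,y_i}(P_{x_i},0)\leq\int_E d(u,y_i)\,P_{x_i}(du)-d(x_i,y_i)$, whose value is $1-P_{x_ix_i}$ when $x_i=y_i$ and $-P_{x_iy_i}\leq0$ when $x_i\neq y_i$.

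It then remains to combine these estimates and sum over $i$. If $x_i\neq y_i$, the excess-mass term is nonpositive while $\alpha_i\geq0$, so $J_d^{x_i,y_i}(\cdot,\cdot)\leq-\alpha_i\,\theta_P(x_i,y_i)\leq-m$, the last inequality being legitimate because $x_i\neq y_i$ forces $\bar x\neq\bar y$ and hence $(\bar x,\bar y,x_i,y_i)$ lies in the index set of the infimum defining $m$. If $x_i=y_i$, the common-mass term vanishes and, since $c_{x_i}(\bar x)-\alpha_i=|c_{x_i}(\bar x)-c_{x_i}(\bar y)|\leq\|c_{x_i}\|_{Lip}\,d(\bar x,\bar y)$ (here $x_i$ is a coordinate of both $\bar x$ and $\bar y$), one gets $J_d^{x_i,y_i}(\cdot,\cdot)\leq(1-P_{x_ix_i})\|c_{x_i}\|_{Lip}\,d(\bar x,\bar y)\leq M\,d(\bar x,\bar y)$. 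Summing and recalling $d(\bar x,\bar y)=\tfrac1N\#\{i:x_i\neq y_i\}$, the indices with $x_i\neq y_i$ contribute at most $-m\,\tfrac1N\#\{i:x_i\neq y_i\}=-m\,d(\bar x,\bar y)$, and those with $x_i=y_i$ at most $M\,d(\bar x,\bar y)\cdot\tfrac1N\#\{i:x_i=y_i\}\leq M\,d(\bar x,\bar y)$, whence $\tfrac1N\sum_i J_d^{x_i,y_i}(\cdot,\cdot)\leq(M-m)\,d(\bar x,\bar y)$, which is what was needed. The only genuinely delicate point I foresee is this last accounting: the excess mass must be paired against the zero measure (rather than against $P_{y_i}$) so that its contribution is governed by $\|c_{x_i}\|_{Lip}$ and $1-P_{x_ix_i}$ instead of by an $O(1)$ quantity, and one must keep the normalisation of $d$ on $E^N$ straight so that the $\tfrac1N$ absorbs the counting factors $\#\{i:\cdot\}$; the rest is routine.
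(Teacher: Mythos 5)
Your proof is correct and follows essentially the same route as the paper: the same splitting of $c_{x_i}(\bar x)P_{x_i}$ into a common-mass part compared with $c_{y_i}(\bar y)P_{y_i}$ via~\eqref{eq:J-prop1}--\eqref{eq:J-prop2} and an excess-mass part compared with the zero measure, the same bounds $J_d^{x_i,y_i}(P_{x_i},P_{y_i})\leq \cW_d(P_{x_i},P_{y_i})-d(x_i,y_i)$ and $J_d^{x_i,y_i}(P_{x_i},0)\leq (1-P_{x_ix_i})\11_{x_i=y_i}$, and the same final summation over the indices with $x_i\neq y_i$ and $x_i=y_i$ before invoking Theorem~\ref{thm:main}. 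The only cosmetic difference is that you obtain the excess-mass estimate from Proposition~\ref{prop:classical-coupling} with $m_2=0$ rather than by evaluating $\cW_d(P_{x_i},\delta_{y_i})$ directly, which gives the same quantity for the trivial distance.
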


{\color{black} One says that the infinitesimal
  generator~\eqref{eq:infinit-gen-zero-rng} defines a zero range
  dynamic, if the jump rate and distribution of one particle does not
  depend on the position of the particles located at other sites. This
  means that, for each $x\in E$, there exists a non-negative function
  $c^{\rm zr}_x:\N\rightarrow\R_+$, where $\N:=\{1,2,\ldots\}$, such that the infinitesimal
  generator of the particle system on $E^N$ is given by
  \begin{align}
     \label{eq:infinit-gen-zero-rng-true}
    \cL^{\rm zr} f(\bar{x})=\sum_{i=1}^N c^{\rm zr}_{x_i}(\eta^{\bar x}_{x_i})\frac{1}{\# E}\sum_{y\in E} P_{{x_i}y}(f(\bar{x}+e_y-e_{x_i})-f(x_i)),
  \end{align}
  where $\eta^{\bar x}_x\stackrel{def}{=}\sum_{i=1}^N \11_{x_i=x}$
  denotes the number of components of $\bar{x}$ equal to $x$.  In this
  situation and when $d$ is the trivial distance on $E$, the bound
  obtained in the above corollary can be refined, as stated in the
  following result.

  \begin{cor}
    \label{cor:zero-range-true}
    The coarse Ricci curvature $\sigma^{\rm zr}$ of the zero range
    particle system with infinitesimal generator $\cL^{\rm zr}$ given
    by~\eqref{eq:infinit-gen-zero-rng-true} satisfies
    \begin{align*}
      \sigma^{\rm zr}\geq \inf_{\substack{x\in E, y\in E\\ n,m\in \N}} \theta_P(x,y) c^{\rm zr}_x(n)\wedge c^{\rm zr}_y(m)-2\sup_{\substack{x\in E\\n,m\in\N}} (1-P_{xx})\frac{n}{m}|c^{\rm zr}_x(n)-c^{\rm zr}_x(n+m)|.
    \end{align*}
  \end{cor}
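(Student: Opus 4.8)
The plan is to recognise the zero range generator~\eqref{eq:infinit-gen-zero-rng-true} as an instance of the generator of the introduction, with $F_i(x_i,\bar x,\cdot)=\tfrac1{\#E}\,c^{\rm zr}_{x_i}(\eta^{\bar x}_{x_i})\,P_{x_i}$ where $P_x:=\sum_{y\in E}P_{xy}\,\delta_y$, and then to estimate the sum furnished by Theorem~\ref{thm:main} after separating the ``aligned'' from the ``non-aligned'' coordinates. Fix $\bar x\neq\bar y\in E^N$ and split $\{1,\dots,N\}=S\sqcup D$, with $S=\{i:x_i=y_i\}$ and $D=\{i:x_i\neq y_i\}$, so that $|D|=N\bard(\bar x,\bar y)$. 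By Theorem~\ref{thm:main},
\[
\cL^c\bard(\bar x,\bar y)=\frac1N\sum_{i\in D}J_d^{x_i,y_i}\!\bigl(F_i(x_i,\bar x,\cdot),F_i(y_i,\bar y,\cdot)\bigr)+\frac1N\sum_{i\in S}J_d^{x_i,y_i}\!\bigl(F_i(x_i,\bar x,\cdot),F_i(y_i,\bar y,\cdot)\bigr),
\]
and I would bound the two sums separately.

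For $i\in D$ write $x=x_i\neq y=y_i$, $a=\tfrac1{\#E}c^{\rm zr}_x(\eta^{\bar x}_x)$ and $b=\tfrac1{\#E}c^{\rm zr}_y(\eta^{\bar y}_y)$, and assume $a\geq b$ (using $J_d^{x,y}(m_1,m_2)=J_d^{y,x}(m_2,m_1)$ otherwise). By inequality~\eqref{eq:alt-bound}, $J_d^{x,y}(aP_x,bP_y)\leq\cW_d\bigl(aP_x,\,bP_y+(a-b)\delta_y\bigr)-a$; gluing an optimal $\cW_d$-coupling of $bP_x$ with $bP_y$ to one of $(a-b)P_x$ with $(a-b)\delta_y$, exactly as in the proof of Proposition~\ref{prop:homogeneity}, bounds the Wasserstein term by $b\,\cW_d(P_x,P_y)+(a-b)(1-P_{xy})$, so that $J_d^{x,y}(aP_x,bP_y)\leq-b\,\theta_P(x,y)-(a-b)P_{xy}\leq-\tfrac1{\#E}\theta_P(x,y)\bigl(c^{\rm zr}_x(\eta^{\bar x}_x)\wedge c^{\rm zr}_y(\eta^{\bar y}_y)\bigr)$, using $\theta_P\geq0$ and $P_{xy}\geq0$. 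Since $\eta^{\bar x}_x\geq1$ and $\eta^{\bar y}_y\geq1$, every $D$-term is at most $-\tfrac1{\#E}\inf_{x,y\in E,\,n,m\in\N}\theta_P(x,y)\bigl(c^{\rm zr}_x(n)\wedge c^{\rm zr}_y(m)\bigr)$, whence $\tfrac1N\sum_{i\in D}J_d^{x_i,y_i}(\cdot)\leq-\bard(\bar x,\bar y)\cdot\tfrac1{\#E}\inf_{x,y,n,m}\theta_P(x,y)\bigl(c^{\rm zr}_x(n)\wedge c^{\rm zr}_y(m)\bigr)$.

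For $i\in S$ write $x=x_i=y_i$; both $F_i(x_i,\bar x,\cdot)=\tfrac1{\#E}c^{\rm zr}_x(\eta^{\bar x}_x)P_x$ and $F_i(y_i,\bar y,\cdot)=\tfrac1{\#E}c^{\rm zr}_x(\eta^{\bar y}_x)P_x$ are multiples of $P_x$, so that, using $\bard(x,x)=0$ and the facts that $\cW_d$ is one half of the total variation distance on equal-mass measures and $\|P_x-\delta_x\|_{TV}=2(1-P_{xx})$,
\[
J_d^{x,x}\bigl(F_i(x_i,\bar x,\cdot),F_i(y_i,\bar y,\cdot)\bigr)=\tfrac1{\#E}\,\bigl|c^{\rm zr}_x(\eta^{\bar x}_x)-c^{\rm zr}_x(\eta^{\bar y}_x)\bigr|\,(1-P_{xx}),
\]
since the two measures differ by $\tfrac1{\#E}\bigl(c^{\rm zr}_x(\eta^{\bar x}_x)-c^{\rm zr}_x(\eta^{\bar y}_x)\bigr)(P_x-\delta_x)$. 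Now group these terms by site: for each $x\in E$ there are $s_x:=\#\{j:x_j=y_j=x\}$ of them and, crucially, $s_x\leq\eta^{\bar x}_x\wedge\eta^{\bar y}_x$. Setting $n:=\eta^{\bar x}_x\wedge\eta^{\bar y}_x$ and $m:=|\eta^{\bar x}_x-\eta^{\bar y}_x|$ (the term vanishing unless $m\geq1$, and then $n\geq1$), one obtains $s_x\,|c^{\rm zr}_x(\eta^{\bar x}_x)-c^{\rm zr}_x(\eta^{\bar y}_x)|\leq n\,|c^{\rm zr}_x(n)-c^{\rm zr}_x(n+m)|=m\cdot\tfrac nm\,|c^{\rm zr}_x(n)-c^{\rm zr}_x(n+m)|$. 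Since each $i\in D$ moves a particle from one site to another, $\sum_{x\in E}|\eta^{\bar x}_x-\eta^{\bar y}_x|\leq2|D|=2N\bard(\bar x,\bar y)$, and therefore $\tfrac1N\sum_{i\in S}J_d^{x_i,y_i}(\cdot)\leq\tfrac2{\#E}\,\bard(\bar x,\bar y)\,\sup_{x\in E,\,n,m\in\N}(1-P_{xx})\tfrac nm\,|c^{\rm zr}_x(n)-c^{\rm zr}_x(n+m)|$.

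Summing the two bounds, dividing by $\bard(\bar x,\bar y)$, taking the supremum over $\bar x\neq\bar y$ and inserting the result into the lower bound of Theorem~\ref{thm:main} yields the assertion, the prefactor $\tfrac1{\#E}$ of the generator standing in front of both terms. I expect the $S$-estimate to be the crux: the key point is that the number $s_x$ of coordinates that stay at a site is dominated by the \emph{smaller} of the two occupation numbers there, so that $s_x$ can be absorbed into the $n$ of $\tfrac nm|c^{\rm zr}_x(n)-c^{\rm zr}_x(n+m)|$, while the total occupation mismatch $\sum_x|\eta^{\bar x}_x-\eta^{\bar y}_x|$ is controlled by $2N\bard(\bar x,\bar y)$; this is exactly what removes the spurious factor $N$ that a direct appeal to Corollary~\ref{cor:zero-range} would produce. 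A secondary, easier point is that, unlike in the birth--death setting of Example~\ref{exa:bd-proc-1d}, the rows $P_x$ and $P_y$ need not be stochastically ordered, so the $D$-terms have to be handled through the generic estimates of Subsection~\ref{sec:Jdxy} rather than through the classical coupling of Proposition~\ref{prop:classical-coupling}.
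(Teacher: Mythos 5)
Your proof is correct and follows essentially the same route as the paper's: the per-coordinate bound $-\theta_P(x,y)\,c^{\rm zr}_x(\eta^{\bar x}_x)\wedge c^{\rm zr}_y(\eta^{\bar y}_y)$ for unaligned coordinates and $(1-P_{xx})|c^{\rm zr}_x(\eta^{\bar x}_x)-c^{\rm zr}_x(\eta^{\bar y}_x)|$ for aligned ones, followed by the two key observations $\#\{i:x_i=y_i=x\}\leq \eta^{\bar x}_x\wedge\eta^{\bar y}_x$ and $\sum_{x\in E}|\eta^{\bar x}_x-\eta^{\bar y}_x|\leq 2N\bard(\bar x,\bar y)$, which is exactly how the paper removes the spurious factor from Corollary~\ref{cor:zero-range}. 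The only divergence is the prefactor $\tfrac{1}{\#E}$ you carry through from the generator~\eqref{eq:infinit-gen-zero-rng-true}: the stated bound and the paper's own proof silently drop it (identifying $c_x(\bar x)$ with $c^{\rm zr}_x(\eta^{\bar x}_x)$ directly), so this is a normalization inconsistency in the paper rather than a gap in your argument.
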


  An interesting feature of a zero range dynamic is that the empirical
  measure of the process is a measure valued Markov process, whose
  coarse Ricci curvature is bounded {\color{black} from} below by the
  coarse Ricci curvature of the dynamic of the full particle system in
  $E^N$.  In~\cite[Section~4]{CaputoEtAl2009}, the authors study the
  mixing properties of the empirical measure dynamic, with the
  assumption that $E$ is finite, that $P_x$ is the uniform measure on
  $E$ for all $x\in E$ and that there exist $0\leq\delta\leq c$ such
  that, for all $x\in E$,
  \[
    c\leq (n+1)c^{\rm zr}_x(n+1)-n c^{\rm zr}_x(n) \leq c+\delta,\quad \forall n\geq 1.
  \]
  (beware that the rate denoted by $c_x(n)$ in~\cite{CaputoEtAl2009}
  is the jump rate for $n$ particles, and hence it corresponds to
  $n\,c^{\rm zr}_x(n)$ in our settings). Under this set of
  assumptions, they obtain a \textit{modified logarithmic Sobolev
    inequality} with rate $c-\delta$, which provides a lower bound for
  the rate of exponential convergence to equilibrium of the process,
  in the relative entropy sense. In~\cite{Boudou2006}, the authors prove that
  the spectral gap for the empirical measure process is lower bounded
  by $c>0$ under weaker assumptions (namely with $\delta=+\infty$).

  Under this particular set of assumptions and considering $d$ equal
  to the trivial distance on $E$, one has $\theta_P(x,y)=1$,
  $P_{xx}=1/\# E$, $c_x^{\rm zr}\geq c$ and
  $\frac{n}{m}|c^{\rm zr}_x(n)-c^{\rm zr}_x(n+m)|\leq \delta$ for all
  $x\in E, y\in E, n,m\in \N$. Hence
  Corollary~\ref{cor:zero-range-true} implies that $c-2\delta$ is a
  lower bound for the coarse Ricci curvature of the particle
  system. 

  As expected, we obtain a weaker lower bound for the rate of
  convergence to the equilibrium of this zero range dynamic than
  in~\cite{CaputoEtAl2009}, since we consider the dynamic of the full
  particle system instead of its empirical measure. However, it is
  interesting to note that both bounds share a similar structure. Note
  also that, contrarily to~\cite{CaputoEtAl2009}, we do not require
  the functions $n\mapsto n\,c_x(n)$ to be non-decreasing and hence
  provide a new result for the convergence rate to equilibrium of such
  zero-range dynamics (both for the full particle system and for the
  empirical measure). Of course, one expects that the actual rate of
  convergence to equilibrium for the empirical measure is higher than
  the one we found.

  Finally, one interesting aspect of our result on $\cW_d$ convergence
  is that we do not require that the process is reversible, allowing
  various choices of $(P_x)_{x\in E}$.


\begin{rem}
  Entropy Ricci curvature of zero range dynamic models have also been
  studied by Fathi and Maas in~\cite[Section~4.2]{Fathi2016}. Under
  the assumptions of~\cite[Section~4]{CaputoEtAl2009} and that
  $\delta\in[0,2c]$, the authors prove that the Entropy Ricci
  curvature is lower bounded by $c/2-5\delta/4$ and hence that this
  system satisfies a gradient flow structure with positive curvature
  when $c>5\delta/2$.
\end{rem}}

\begin{proof}[\color{black}Proof of Corollary~\ref{cor:zero-range}]
With the notation of Theorem~\ref{thm:main}, the jump measures of this interacting particle system are
\begin{align*}
F(x,\bar{x},\cdot)=c_{x}(\bar{x})P_x,\ \forall x\in E,\ \bar{x}\in E^N.
\end{align*}
Using properties~\eqref{eq:J-prop1} and~\eqref{eq:J-prop2}, we obtain  for all $x, y\in E$ and $\bar{x},\bar{y}\in E^N$ such that $c_x(\bar{x})\geq c_y(\bar{y})$
\begin{align*}
J^{x,y}_d(F(x,\bar{x},\cdot),F(y,\bar{y},\cdot))&\leq 
c_y(\bar{y})\,J^{x,y}_d\left(P_x,P_y\right)+(c_x(\bar{x})-c_y(\bar{y}))\,J^{x,y}_d\left(P_x,0\right)\\
&\leq c_{y}(\bar{y})\left[\cW_d\left(P_x,P_y\right)-d(x,y)\right]+(c_x(\bar{x})-c_y(\bar{y}))\left[\cW_d(P_x,\delta_y)-d(x,y)\right]\\
&\leq -c_{y}(\bar{y})\theta_P(x,y)+ (c_x(\bar{x})-c_y(\bar{y}))(1-P_{xx})\11_{x=y}.
\end{align*}
As a consequence, for all $\bar{x},\bar{y}\in E^N$,
\begin{align}
\sum_{i=1}^N J^{x_i,y_i}_d(F(x_i,\bar{x},\cdot),F(y_i,\bar{y},\cdot))
&\leq - \sum_{i=1}^N \11_{x\neq y}\theta_P(x_i,y_i)c_{x_i}(\bar{x})\wedge c_{y_i}(\bar{y})\nonumber\\
&\phantom{\leq - \sum_{i=1}^N }+ \sum_{i=1}^N \11_{x_i=y_i} (1-P_{x_ix_i}) \left|c_{x_i}(\bar{x})-c_{y_i}(\bar{y})\right|\label{eq:useful11}\\
&\leq -N\,d(\bar{x},\bar{y})\,\inf_{\color{black}\bar{u}\neq \bar{v}\in E^N,\,u\in\bar{u},v\in\bar{v}} \theta_P(u,v)c_{u}(\bar{u})\wedge c_{v}(\bar{v})\nonumber\\
&\phantom{\leq (1-}+N\,d(\bar{x},\bar{y}) \sup_{\color{black}\substack{u\in E,\bar{u}\neq \bar{v}\in E^N\\u\in\bar{u}\text{ and }u\in\bar{v}}} (1-P_{uu})\frac{|c_u(\bar{u})-c_u(\bar{v})|}{d(\bar{u},\bar{v})}.\nonumber
\end{align}
This and Theorem~\ref{thm:main} allow us to conclude the proof.
\end{proof}

{\color{black}
\begin{proof}[Proof of Corollary~\ref{cor:zero-range-true}]
  The same calculations as above up to~\eqref{eq:useful11} lead to
  \begin{align*}
    \sum_{i=1}^N J^{x_i,y_i}_d(F(x_i,\bar{x},\cdot),F(y_i,\bar{y},\cdot))
    &\leq -N\,d(\bar{x},\bar{y})\,\inf_{\color{black}\bar{u}\neq \bar{v}\in E^N,\,u\in\bar{u},v\in\bar{v}} \theta_P(u,v)c_{u}(\bar{u})\wedge c_{v}(\bar{v})\\
    &\phantom{\leq - \sum_{i=1}^N \11_{x\neq y}}+ \sum_{x\in E}\sum_{i=1}^N \11_{x_i=y_i=x} (1-P_{xx}) \left|c^{\rm zr}_{x}(\eta^{\bar x}_x)-c^{\rm zr}_{x}(\eta^{\bar{y}}_x)\right|.
  \end{align*}
  But $\sum_{i=1}^N \11_{x_i=y_i=x}\leq \eta^{\bar x}_x\wedge\eta^{\bar y}_x$, so that
  \begin{multline*}
    \sum_{x\in E}\sum_{i=1}^N \11_{x_i=y_i=x} (1-P_{xx}) \left|c^{\rm zr}_{x}(\eta^{\bar x}_x)-c^{\rm zr}_{x}(\eta^{\bar{y}}_x)\right|
    \leq \sum_{x\in E}\eta^{\bar x}_x\wedge\eta^{\bar y}_x (1-P_{xx}) \left|c^{\rm zr}_{x}(\eta^{\bar x}_x)-c^{\rm zr}_{x}(\eta^{\bar{y}}_x)\right|\\
    \leq \sum_{x\in E}|\eta^{\bar x}_x-\eta^{\bar y}_x| \,\sup_{\substack{u\in E\\n,m\in\N}} (1-P_{uu})\frac{n}{m}|c^{\rm zr}_u(n)-c^{\rm zr}_u(n+m)|.
  \end{multline*}
  Observing that
  $\sum_{x\in E}|\eta^{\bar x}_x-\eta^{\bar y}_x|\leq 2\,N\,d(\bar x,\bar
  y)$, one can use Theorem~\ref{thm:main} to conclude the proof.
\end{proof}}

\subsection{Some simple variants of Fleming-Viot type systems}
\label{sec:Fleming-Viot}

Assume that the distance $d$ is bounded by $d_\infty$ over $E\times E$. We consider the situation where there exist a measurable function $\beta:E\rightarrow\R_+$ and a Markovian kernel $(P_x)_{x\in E}$  such that
\begin{align}
\label{eq:F-for-ZR-FV}
F(x,\bar{x},dz)=q(x,dz)+\frac{\beta(x)}{N}\sum_{i=1}^N P_{x_i}(dz)=q(x,dz)+\beta(x) \mu_{\bar{x}} P,
\end{align}
where $\mu_{\bar{x}}=\frac{1}{N}\sum_{i=1}^N \delta_{x_i}$ is the empirical distribution of $\bar{x}$. In opposition  to the zero range dynamics of the previous subsection, the jump rate of a particle only depends on its position and its jump measure depend on the whole position of the system.

Note that, in the case where $P_x=\delta_x$ for all $x\in E$, we recover the Fleming-Viot type system introduced in~\cite{BurdzyHolystEtAl1996,DelMoral2013} and whose coarse Ricci curvature with respect to the {\color{black} trivial} distance $d(x,y)=\11_{x\neq y}$ has been studied in~\cite{CloezThai2016} (see also \cite{AsselahFerrariEtAl2011,
DelMoral2013,GrigorescuKang2004,
BurdzyHolystEtAl2000,Villemonais2014} for general properties).

Setting $\theta_*=\inf_{x\neq y\in E} \theta_P(x,y)\in[0,1]$, where $\theta_P(x,y)$ is the discrete time coarse Ricci curvature of $P$ (see Subsection~\ref{sec:zero-range-dyn}), we have, for all $x,y\in E$ such that $\beta(x)\geq \beta(y)$,
\begin{align*}
J^{x,y}_d(F(x,\bar{x},\cdot),F(y,\bar{y},\cdot))&\leq J_d^{x,y}(q(x,\cdot),q(y,\cdot))+\beta(y)\cW_d(\mu_{\bar{x}}P,\mu_{\bar{y}}P)\\
&\quad\quad\quad+(\beta(x)-\beta(y))\int_E d(z,y) \mu_{\bar{x}}P(dz) -\beta(x) d(x,y)\\
&\leq J_d^{x,y}(q(x,\cdot),q(y,\cdot))+\|\beta\|_\infty(1-\theta_P)d(\bar{x},\bar{y})\\
&\quad\quad\quad +(\beta(x)-\beta(y))d_\infty-\beta(x) d(x,y)
\end{align*}
This and Theorem~\ref{thm:main} entails the following corollary.
\begin{cor}
\label{cor:ZR-FV}
The coarse Ricci curvature $\sigma$ of the particle system defined by~\eqref{eq:F-for-ZR-FV} satisfies
\begin{align*}
\sigma\geq -(1-\theta_P)\|\beta\|_\infty-\sup_{x\neq y}\ \left(\frac{J_d^{x,y}(q(x,\cdot),q(y,\cdot))}{d(x,y)}+\frac{|\beta(x)-\beta(y)|}{d(x,y)}d_\infty-\beta(x)\vee \beta(y)\right)
\end{align*}
\end{cor}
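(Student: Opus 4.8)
The estimate displayed just above Corollary~\ref{cor:ZR-FV} already does the heart of the work: for any configurations $\bar x,\bar y$ and any coordinate $i$ it bounds $J_d^{x_i,y_i}(F(x_i,\bar x,\cdot),F(y_i,\bar y,\cdot))$ by the $q$-term plus correction terms, provided $\beta(x_i)\ge\beta(y_i)$. The plan is to (i) supply the one genuinely new inequality on which that estimate rests, namely a Wasserstein contraction for the pushed-forward empirical measures; (ii) drop the convention $\beta(x_i)\ge\beta(y_i)$; and (iii) average over $i$ and feed the result into Theorem~\ref{thm:main}.

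For (i) I would prove $\cW_d(\mu_{\bar x}P,\mu_{\bar y}P)\le(1-\theta_*)\,\bard(\bar x,\bar y)$, with $\theta_*=\inf_{u\neq v}\theta_P(u,v)$. Since $\mu_{\bar x}P=\frac1N\sum_i P_{x_i}$ and $\mu_{\bar y}P=\frac1N\sum_i P_{y_i}$, pick for each $i$ an optimal coupling $\pi_i$ of $P_{x_i}$ and $P_{y_i}$, so that $\int d\,d\pi_i=\cW_d(P_{x_i},P_{y_i})\le(1-\theta_*)d(x_i,y_i)$ by the definition~\eqref{eq:CRC-discrete-time} of $\theta_P$ (and trivially when $x_i=y_i$); then $\frac1N\sum_i\pi_i$ is a coupling of $\mu_{\bar x}P$ and $\mu_{\bar y}P$ of cost $\le(1-\theta_*)\bard(\bar x,\bar y)$. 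Together with $\int_E d(z,y)\,\mu_{\bar x}P(dz)\le d_\infty$, this turns the first line of the displayed estimate --- which itself comes from Proposition~\ref{prop:homogeneity}, the bound $J_d^{x,y}(m_1,m_2)\le\cW_d(m_1,m_2)-d(x,y)$ for probability measures (Subsection~\ref{sec:Jdxy}), and Proposition~\ref{prop:classical-coupling} applied to the residual mass $(\beta(x_i)-\beta(y_i))\mu_{\bar x}P$ --- into the second line.

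For (ii): from the symmetry of $\cW_d$ and of $d$ one has $J_d^{x,y}(m_1,m_2)=J_d^{y,x}(m_2,m_1)$ directly from the definition, so applying the displayed estimate to $(x_i,y_i)$ or to $(y_i,x_i)$ according to which of $\beta(x_i),\beta(y_i)$ is larger gives, for every coordinate $i$,
\[
J_d^{x_i,y_i}\!\bigl(F(x_i,\bar x,\cdot),F(y_i,\bar y,\cdot)\bigr)\ \le\ (1-\theta_*)\|\beta\|_\infty\,\bard(\bar x,\bar y)\ +\ d(x_i,y_i)\,\sup_{u\neq v}\Bigl(\tfrac{J_d^{u,v}(q(u,\cdot),q(v,\cdot))}{d(u,v)}+\tfrac{|\beta(u)-\beta(v)|}{d(u,v)}\,d_\infty-\beta(u)\vee\beta(v)\Bigr),
\]
using that $J_d^{x_i,y_i}(q(x_i,\cdot),q(y_i,\cdot))/d(x_i,y_i)$ and $J_d^{y_i,x_i}(q(y_i,\cdot),q(x_i,\cdot))/d(y_i,x_i)$ are both at most that supremum, and noting that the $d(x_i,y_i)$-weighted term vanishes when $x_i=y_i$.

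For (iii): summing over $i\in\{1,\dots,N\}$, dividing by $N$, and using $\frac1N\sum_i d(x_i,y_i)=\bard(\bar x,\bar y)$ leaves $(1-\theta_*)\|\beta\|_\infty\,\bard(\bar x,\bar y)$ plus at most $\bard(\bar x,\bar y)$ times the above supremum; dividing by $\bard(\bar x,\bar y)>0$, taking the supremum over $\bar x\neq\bar y$, and inserting the bound $\sigma\ge-\sup_{\bar x,\bar y}\frac1N\sum_i J_d^{x_i,y_i}(F(x_i,\bar x,\cdot),F(y_i,\bar y,\cdot))/\bard(\bar x,\bar y)$ of Theorem~\ref{thm:main} yields the claim (with $\theta_P$ in the statement to be read as $\theta_*$). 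The only step carrying real content is the contraction bound in (i) --- propagating the edgewise discrete-time curvature $\theta_P$ to a contraction of the averaged pushforwards; the rest is the routine-but-fussy bookkeeping of (ii)--(iii) using the calculus of $J_d^{x,y}$ from Subsection~\ref{sec:Jdxy}.
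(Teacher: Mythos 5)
Your proposal is correct and follows essentially the same route as the paper, which establishes the corollary via the displayed chain of inequalities immediately preceding its statement (subadditivity and homogeneity of $J_d^{x,y}$ from Proposition~\ref{prop:homogeneity}, the probability-measure bound $J_d^{x,y}(m_1,m_2)\leq \cW_d(m_1,m_2)-d(x,y)$, and Proposition~\ref{prop:classical-coupling} for the residual mass) combined with Theorem~\ref{thm:main}. You in fact supply details the paper leaves implicit --- the mixture-coupling argument giving $\cW_d(\mu_{\bar x}P,\mu_{\bar y}P)\leq(1-\theta_*)\bard(\bar x,\bar y)$, the symmetry $J_d^{x,y}(m_1,m_2)=J_d^{y,x}(m_2,m_1)$ to remove the convention $\beta(x)\geq\beta(y)$, and the final averaging over coordinates --- all of which are accurate.
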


\medskip

\begin{rem}
One could also consider the infinitesimal generator
\begin{align*}
Lf(x)=\int_E (f(z)-f(x))\,q(x,dz),\ \forall x\in E,
\end{align*}
and prove that
\begin{align*}
\sigma\geq -(1-\theta_P)\|\beta\|_\infty-\sup_{x\neq y}\ \left(\frac{L^c d(x,y)}{d(x,y)}+\frac{|\beta(x)-\beta(y)|}{d(x,y)}d_\infty-\beta(x)\vee \beta(y)\right)
\end{align*}
for any coupling $L^c$ of $L$.
This is also true if $L$ is not a pure jump infinitesimal generator (see an application in Example~\ref{exa:PDMP-jumps-and-killing} below).
\end{rem}

\bigskip
\begin{exa}
\label{exa:Fleming-Viot-discr} If $d$ is the {\color{black} trivial} distance $d(x,y)=\11_{x\neq y}$, then we obtain
\begin{align*}
\sigma\geq -(1-\theta_P)\|\beta\|_\infty-\sup_{x\neq y}\ \left(J_d^{x,y}(q(x,\cdot),q(y,\cdot))-\beta(x)\wedge \beta(y)\right).
\end{align*}
Note that, in several cases, $J_d^{x,y}(q(x,\cdot),q(y,\cdot){\color{black})}$ can be bounded from above using the results of Example~\ref{exa:tv-N-equal1}.
In particular, if $P_x=\delta_x$ (so that $\theta_P=0$) and $E$ is a discrete state space, one gets
\begin{align*}
\sigma\geq \inf_{x\neq y} q(x,y)+q(y,x)+\sum_{z\in E} q(x,z)\wedge q(y,z)+\beta(x)\wedge\beta(y)-\|\beta\|_\infty
\end{align*}
and hence recovers~\cite[Theorem~1.1,Remark~2.4]{CloezThai2016}.
\end{exa}

\begin{exa}
We assume that $E=\N^{\color{black}0}$, that $\beta(x)=c\11_{x=0}$ for some $c>0$ and that $q(x,\cdot)$ is the jump kernel of a birth and death process with birth and death rates respectively provided by $(b_x)_{x\in \N^{\color{black}0}}$ and $(d_x)_{x\in\N^{\color{black}0}}$ ($d_0=0$), that is
\begin{align*}
q(x,dz)=b_x\delta_{x+1}(dz)+d_x\delta_{x-1}(dz),\ \forall x\in \N^{\color{black}0}.
\end{align*}
We also assume that the process comes down from infinity, which means that $\sup_{x\in\N^{\color{black}0}} \E_x(T_0)<\infty$, where $T_0$ is the first hitting time of $0$ for the birth and death process. This is equivalent to
\begin{align*}
S:=\sum_{k\geq 1}\frac{1}{d_k\alpha_k}\sum_{l\geq k} \alpha_l <\infty,
\end{align*}
with
$\alpha_k=\left(\prod_{i=1}^{k-1} b_i\right)/\left(\prod_{i=1}^{k} d_i\right)$
 (see for instance~\cite{DoornErik1991}).

 In this case, there exist a {\color{black} bounded} function
 $\eta:\N^{\color{black}0}\rightarrow\R_+$ and a constant $\lambda_0>0$
 such that
 $b_x(\eta(x+1)-\eta(x))+d_x(\eta(x-1)-\eta(x))=-\lambda_0 \eta(x)$
 for all $x\geq 1$ ($\lambda_0$ and $\eta$ are the first eigenvalue
 and the corresponding eigenfunction for the infinitesimal generator
 of the birth and death process killed when it reaches $0$,
 see~\cite{ChampagnatVillemonais2016b} where the definition of $\eta$
 clearly implies that it is increasing {\color{black} and bounded} for birth and death
 processes {\color{black} coming down from infinity}).

Let us choose the geodesic distance $d$ on $\N^{\color{black}0}$ defined by
\begin{align*}
d(x,y)=\left|\eta(x)-\eta(y)\right|,
\end{align*}
and deduce from the computations of Example~\ref{exa:bd-proc-1d} that the coarse Ricci curvature $\sigma$ of the particle system satisfies
\begin{align*}
\sigma\geq c\left(\theta_p-\frac{\|\eta\|_\infty}{\eta(1)}\right)+\lambda_0.
\end{align*}

Consider now the Fleming-Viot type system case, \textit{i.e.} $P_x=\delta_x$. In this case, we have $\theta_P=0$, so that 
$$\sigma\geq \lambda_0-c\|\eta\|_\infty/\eta(1).$$
In particular, since this bound does not depend on $N$ and because of the convergence result of~\cite{Villemonais2014}, one can deduce that, if $c<\lambda_0\eta(1)/\|\eta\|_\infty$, then the coarse Ricci curvature is positive, uniformly in $N\geq 2$. As a consequence, a birth and death process with birth and death rates $(b_x)_{x\in\N^{\color{black}0}}$ and $(d_x)_{x\in\N^{\color{black}0}}$ and absorption rate $c\11_{x=0}$, 
converges exponentially fast toward its unique quasi-stationary distribution, conditionally on  non absorption (the details are the same as in~\cite{CloezThai2016}, where the total variation norm case is considered).
\end{exa}

\medskip
\begin{exa}
\label{exa:PDMP-jumps-and-killing}
In this example, we consider a piecewise deterministic Markov process (PDMP) evolving in $[1,+\infty)^N$ \textcolor{black}{(see~\cite{Davis1993} for a
  reference on PDMPs)}, with generator
\begin{align*}
Lf(\bar{x})=\sum_{i=1}^N\left[-x_i^2f'(x_i)\11_{x_i\geq 1}+f(x_i+1)-f(x_i)\right]+\sum_{i=1}^N\beta(x_i)\sum_{j=1}^N \int_{\R_+}\frac{f(z)-f(x_i)}{N}  P_{x_j}(dz)
\end{align*}
and the distance 
\begin{align*}
d(x,y)=\left|\exp\left(\int_1^x \frac{1}{x^2} \lambda_1(dx)\right)-\exp\left(\int_1^y \frac{1}{y^2} \lambda_1(dy)\right)\right|
\end{align*}
Each particle in this process evolves following the deterministic
dynamic $dx_t=-x_t^2dt$ and undergoes jumps of size $+1$ at rate $1$,
and jumps with respect to $\mu_{\bar{x}}P$ at rate $\beta(x_i)$.

\textcolor{black}{Consider the pure jump part of $L$, defined by
  \[\cL f(\bar{x})=\sum_{i=1}^N\beta(x_i)\sum_{j=1}^N
  \int_{\R_+}\frac{f(z)-f(x_i)}{N} P_{x_j}(dz).\] In the setting of
  Section~\ref{sec:main-result}, this corresponds to the jump measures
  $F(x,\bar{x},dz)=\delta_{x+1}+\beta(x)\mu_{\bar{x}}P$. Hence
  Theorem~\ref{thm:main} provides a coupling operator $\cL^c$ for
  $\cL$ which satisfies (following the above calculations),
  \begin{align*}
    \cL^cd(\bar{x},\bar{y})\leq -(1-\theta_P)\|\beta\|_\infty-\sup_{x\neq y}\left(\frac{|\beta(x)-\beta(y)|}{d(x,y)}d_\infty-\beta(x)\vee\beta(y)\right).
  \end{align*}
Then, considering the coupling operator $L^c$ for $L$ defined by
\begin{align*}
L^cf(\bar{x},\bar{y})=\sum_{i=1}^N\left[-x_i^2\frac{\d f}{\d x_i}(\bar{x},\bar{y})-y_i^2\frac{\d f}{\d y_i}(\bar{x},\bar{y})\right]+\cL^c f(\bar{x},\bar{y}),
\end{align*}
we deduce that}
\begin{align*}
L^c d(\bar{x},\bar{y})\leq -\left(1-(1-\theta_P)\|\beta\|_\infty-\sup_{x\neq y}\left(\frac{|\beta(x)-\beta(y)|}{d(x,y)}d_\infty-\beta(x)\vee\beta(y)\right)\right)\,d(\bar{x},\bar{y}),
\end{align*}
which entails that
\begin{align*}
\sigma\geq 1-(1-\theta_P)\|\beta\|_\infty-\sup_{x\neq y}\left(\frac{|\beta(x)-\beta(y)|}{d(x,y)}d_\infty-\beta(x)\vee\beta(y)\right).
\end{align*}
Note that, if $\beta$ is small enough and smooth enough, this provide a positive lower bound for the coarse Ricci curvature, which does not depend on $N$. In particular, applying this result to the Fleming-Viot type case and using the convergence result~\cite{Villemonais2014}, \textit{i.e.} $P_x=\delta_x$, letting $N\rightarrow+\infty$ and interpreting $\beta$ as a killing rate, one easily obtains new contraction results in $\cW_d$ for the  conditional distribution of this PDMP and also new existence/uniqueness results for the quasi-stationary distribution of this PDMP.
\end{exa}

\subsection{Birth and death processes in mean field type interaction}
\label{sec:BD-mean-field}

In~\cite{Thai2016}, the author studies, among other things, the coarse Ricci curvature of a system of particles evolving as  birth and death processes whose birth and death rates depend on the norm of the whole system, with $d(x,y)=|x-y|$. Similarly as in the cited article, we make use of the notation $d_x+q_-(x,\bar{x})$ for the death rate and $b_x+q_+(x,\bar{x})$ for the birth rate ($q_-$ and $q_+$  are allowed to depend on the position of the whole system in our case). Using the notation of Theorem~\ref{thm:main}, this means that
\begin{align*}
F(x,\bar{x},dz)=(d(x)+q_-(x,\bar{x}))\delta_{x-1}+(b(x)+q_+(x,\bar{x}))\delta_{x+1},\ \forall x\in E,\ \bar{x}\in E^N.
\end{align*}

The same calculus as in Example~\ref{exa:bd-proc-1d} (with $u_k=1$ for all $k$) shows that,
for all $x, y\in \N^{\color{black}0}$ and $\bar{x},\bar{y}\in (\N^{\color{black}0})^N$, we have, if $x< y$ and $x=y$ respectively,
\begin{align*}
J_d^{x,y}(F(x,\bar{x},\cdot),F(y,\bar{y},\cdot))&\leq 
\begin{cases}
d_x+q_-(x,\bar{x})-b_x-q_+(x,\bar{x})-d_y-q_-(y,\bar{y})+b_y+q_+(y,\bar{y}),\\
\left|q_-(x,\bar{x})-q_-(x,\bar{y})\right|+\left|q_+(x,\bar{x})-q_+(x,\bar{y})\right|.
\end{cases}
\end{align*}
Hence, if there exist some constants $a\in\R$ and $b>0$ such that
\begin{align*}
d_x+q_-(x,\bar{x})-b_x-q_+(x,\bar{x})-d_y-q_-(y,\bar{y})+b_y+q_+(y,\bar{y})\leq ad(x,y)+bd(\bar{x},\bar{y})
\end{align*}
and such that
\begin{align*}
\left|q_-(x,\bar{x})-q_-(x,\bar{y})\right|+\left|q_+(x,\bar{x})-q_+(x,\bar{y})\right|\leq bd(\bar{x},\bar{y}),
\end{align*}
then, by Theorem~\ref{thm:main}, the coarse Ricci curvature $\sigma$ of the particle system satisfies
\begin{align*}
\sigma\geq -a-b.
\end{align*}
In the particular case of the assumptions and notation of~\cite[Theorem~1.1]{Thai2016}, we can take $a=-\lambda+\alpha$ and $b=\alpha$, so that $\sigma\geq \lambda-2\alpha$ and we recover the result of the cited paper. Note that we did not need to explicitly describe a coupling  in order to obtain this bound and to slightly relax the assumptions of~\cite{Thai2016}. Also, this approach can be easily extended to other processes as in~Example~\ref{exa:bd-proc-1d-modified} for instance.

\subsection{System of particles with absolutely continuous jump measures}
\label{sec:with-density}

In this section, we assume that $E=\R^n$, $n\geq 1$, endowed with the Euclidean distance and we assume that there exist a probability measure $\zeta\in\cM_d(E)$ and two measurable functions $\alpha:E\times E^N\times E\rightarrow\R_+$ and $\beta:E\times E^N\rightarrow\R_+$ such that, for all $x\in E$ and $\bar{x}\in E^N$, $\alpha(x,\bar{x},\cdot)$ is the density of a probability measure with respect to $\zeta$ and such that
\begin{align*}
F_i(x,\bar{x},x+dz)=\beta(x,\bar{x})\alpha(x,\bar{x},z)\zeta(dz),\ \forall x\in E,\ \forall \bar{x}\in E^N,
\end{align*}
or equivalently that
\begin{align*}
F_i(x,\bar{x},dz)=\beta(x,\bar{x})\alpha(x,\bar{x},z)\zeta(dz-x),\ \forall x\in E,\ \forall \bar{x}\in E^N.
\end{align*}
For the sake of clarity, we assume that $F_i$ does not depend on $i$ (and we will set $F:=F_i$ in the rest of this subsection). However, most of the calculations considered in this section can be worked out in the general case.

The following lemma will be used together with Theorem~\ref{thm:main}
in order to compute a lower bound for the coarse Ricci curvature of
such interacting particle systems. This is particularly interesting if
one knows how to find bounds for the first moment of any probability
of type $\alpha(x,\bar{x},z)\zeta(dz)$ and for the Wasserstein
distance between any probability distributions of the same type. This
is the case for instance if the $\alpha(x,\bar{x},z)\zeta(dz)$ are
exponential laws (see Example~\ref{ex:exponential-kernel}), Gaussian
measures (see Example~\ref{ex:gaussian-kernel}) or finitely supported
discrete measures on {\color{black} $\Z$} (see Example~\ref{ex:discrete-kernel}). 

\begin{lem}
\label{le:kernel} 
Under the above settings, we have, for all $x,\bar{x},y,\bar{y}$ such
that $\beta(x,\bar{x})\geq \beta(y,\bar{y})$,
\begin{align*}
J_d^{x,y}(F(x,\bar{x},\cdot),F(y,\bar{y},\cdot))&\leq \beta(y,\bar{y})\cW_d(\alpha(x,\bar{x},z)\zeta(dz),\alpha(y,\bar{y},z)\zeta(dz))\\
&\quad\quad\quad\quad+\left(\beta(x,\bar{x})-\beta(y,\bar{y})\right)\int_E |z|\,\alpha(x,\bar{x},z)\zeta(dz).
\end{align*}
\end{lem}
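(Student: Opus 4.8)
The plan is to use the homogeneity and sub-additivity of $J_d^{x,y}$ from Proposition~\ref{prop:homogeneity} to peel off the common mass $\beta(y,\bar y)$, and then to compare the two normalized jump measures by a synchronous (shift) coupling built from an optimal coupling of the displacement laws; the residual mass $\beta(x,\bar x)-\beta(y,\bar y)$ is handled by the crude bound coming from coupling with the Dirac mass at $y$.

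Concretely, I would fix $x,\bar x,y,\bar y$ with $\beta(x,\bar x)\geq\beta(y,\bar y)$, write $\beta_x:=\beta(x,\bar x)$, $\beta_y:=\beta(y,\bar y)$, and let $P_x$ (resp. $P_y$) be the probability measure $F(x,\bar x,\cdot)/\beta_x$ (resp. $F(y,\bar y,\cdot)/\beta_y$), which by definition of $F$ is the law of $x+Z_x$ (resp. $y+Z_y$) where $Z_x$ has law $\alpha(x,\bar x,z)\,\zeta(dz)$ (resp. $Z_y$ has law $\alpha(y,\bar y,z)\,\zeta(dz)$), so that $F(x,\bar x,\cdot)=\beta_x P_x$ and $F(y,\bar y,\cdot)=\beta_y P_y$ with $P_x(E)=P_y(E)=1$. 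Decomposing $\beta_x P_x=\beta_y P_x+(\beta_x-\beta_y)P_x$ and applying~\eqref{eq:J-prop1} followed by~\eqref{eq:J-prop2} (the second term is simply $0$ when $\beta_x=\beta_y$, so this step is harmless), I obtain
\[
J_d^{x,y}(F(x,\bar x,\cdot),F(y,\bar y,\cdot))\ \leq\ \beta_y\,J_d^{x,y}(P_x,P_y)+(\beta_x-\beta_y)\,J_d^{x,y}(P_x,0).
\]

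For the first term, since $P_x$ and $P_y$ are probability measures, adding the Dirac mass $\delta_{(x,y)}$ to an optimal coupling of $P_x$ and $P_y$ gives a coupling of $P_x+\delta_x$ and $P_y+\delta_y$, whence $J_d^{x,y}(P_x,P_y)\leq\cW_d(P_x,P_y)-d(x,y)$. Next, taking $(Z_x,Z_y)$ to be an optimal coupling of the displacement laws $\alpha(x,\bar x,z)\zeta(dz)$ and $\alpha(y,\bar y,z)\zeta(dz)$, the pair $(x+Z_x,\,y+Z_y)$ is a coupling of $P_x$ and $P_y$, and the triangle inequality $|x+Z_x-y-Z_y|\leq|x-y|+|Z_x-Z_y|$ yields $\cW_d(P_x,P_y)\leq d(x,y)+\cW_d(\alpha(x,\bar x,z)\zeta(dz),\alpha(y,\bar y,z)\zeta(dz))$, so that $J_d^{x,y}(P_x,P_y)\leq\cW_d(\alpha(x,\bar x,z)\zeta(dz),\alpha(y,\bar y,z)\zeta(dz))$. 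For the second term, $J_d^{x,y}(P_x,0)=\cW_d(P_x,\delta_y)-d(x,y)=\int_E d(w,y)\,P_x(dw)-d(x,y)=\int_E|x+z-y|\,\alpha(x,\bar x,z)\zeta(dz)-d(x,y)\leq\int_E|z|\,\alpha(x,\bar x,z)\zeta(dz)$, again by the triangle inequality. Substituting both bounds into the displayed inequality gives exactly the statement of the lemma.

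I do not expect a genuine obstacle: the only points requiring minor care are the bookkeeping of the translation relating the jump measure $F(x,\bar x,\cdot)$ to the displacement law $\alpha(x,\bar x,z)\zeta(dz)$, and restricting the use of~\eqref{eq:J-prop2} to the strictly positive scalar $\beta_x-\beta_y$ (with the case $\beta_x=\beta_y$ treated trivially); everything else reduces to two applications of the triangle inequality for the Euclidean norm on $E=\R^n$ together with the elementary coupling-with-a-Dirac identity $\cW_d(P_x,\delta_y)=\int_E d(w,y)\,P_x(dw)$.
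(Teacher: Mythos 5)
Your proof is correct and takes essentially the same route as the paper's: the same decomposition of $F(x,\bar x,\cdot)=\beta_y P_x+(\beta_x-\beta_y)P_x$ followed by Proposition~\ref{prop:homogeneity}, the bound $J_d^{x,y}(m_1,m_2)\leq \cW_d(m_1,m_2)-d(x,y)$ for probability measures (the paper invokes~\eqref{eq:alt-bound} for this), and the same two triangle-inequality estimates via the shift coupling and the coupling with $\delta_y$. The only cosmetic difference is that you cancel the $d(x,y)$ terms one at a time, whereas the paper collects the $-\beta_x d(x,y)$ term and cancels it at the end.
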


\begin{proof}
For all $x,\bar{x},y,\bar{y}$ such that $\beta(x,\bar{x})\geq \beta(y,\bar{y})$, we obtain from~\eqref{eq:alt-bound},~\eqref{eq:J-prop1} and~\eqref{eq:J-prop2} that
\begin{align*}
J_d^{x,y}(F(x,\bar{x},\cdot),F(y,\bar{y},\cdot))&\leq \beta(y,\bar{y})\cW_d(\alpha(x,\bar{x},z)\zeta(dz-x),\alpha(y,\bar{y},z)\zeta(dz-y))\\
&\quad\quad +\left(\beta(x,\bar{x})-\beta(y,\bar{y})\right)\cW_d(\alpha(x,\bar{x},z)\zeta(dz-x),\delta_y)\\
&\quad\quad - \beta(x,\bar{x}) d(x,y).
\end{align*}
On the one hand, we have
\begin{multline}
\cW_d(\alpha(x,\bar{x},z)\zeta(dz-x),\alpha(y,\bar{y},z)\zeta(dz-y))\\
\leq d(x,y) + \cW_d(\alpha(x,\bar{x},z)\zeta(dz),\alpha(y,\bar{y},z)\zeta(dz))
\label{eq:crude-majoration-1}
\end{multline}
and, on the other hand, 
\begin{align}
\cW_d(\alpha(x,\bar{x},z)\zeta(dz-x),\delta_y)&\leq d(x,y)
+ \cW_d(\alpha(x,\bar{x},z)\zeta(dz-x),\delta_x)\label{eq:crude-majoration-2}\\
&= d(x,y)
+ \cW_d(\alpha(x,\bar{x},z)\zeta(dz),\delta_0)\nonumber\\
&= d(x,y)
 +\int_E |z|\,\alpha(x,\bar{x},z)\zeta(dz).\nonumber
\end{align}
This concludes the proof of Lemma~\ref{le:kernel}.
\end{proof}

\begin{rem}
  Theorem~\ref{thm:main} used in conjunction with
  Lemma~\ref{le:kernel} can only provide non-positive
  {\color{black}{lower}} bounds for the coarse Ricci curvature of the
  particle system. However, one can use such results to recover
  positive lower bounds in the case of the perturbation of a system of
  particles with known positive lower bound. {\color{black}More precisely, if an
  infinitesimal generator $L$ can be written $L=H+\varepsilon \,\cL$,
  where $H$ is known to have a positive curvature $\sigma_H>0$
  (obtained using a coupling generator $H^c$) and where one gets a
  non-positive lower bound $s_L\leq 0$ on the curvature of $\cL$
  (obtained using the above results and hence using a coupling
  operator $\cL^c$), then one deduce using the coupling operator
  $L^c=H^c+\varepsilon \cL^c$ that, for all
  $\varepsilon\in[0,-s_L/\sigma_H)$, $L$ has a positive curvature
  (this idea can typically be applied in the context of
  Remark~\ref{rem:perturbed-diff} and
  Example~\ref{exa:perturbed-diff}).}
\end{rem}

\begin{rem}
Lemma~\ref{le:kernel} is general but usually not sharp, since we used a crude upper bound in~\eqref{eq:crude-majoration-1} and~\eqref{eq:crude-majoration-2}. For instance, the case studied in Subsection~\ref{sec:BD-mean-field} enters the settings of Lemma~\ref{le:kernel}, but we obtain a better bound using a precise computation of the Wasserstein distance between measures with only three atoms. However, in the general case, the computation of the Wasserstein distance between two discrete probability measures with finite support is a difficult task.
\end{rem}

\begin{exa}
\label{ex:exponential-kernel}
In this example, we consider a process evolving in $\R$ with exponential jump measures $\alpha$ (in particular, the jumps are almost surely positive). More precisely, we assume that $\alpha(x,\bar{x},z)\zeta(dz)=\11_{z>0}\lambda(x,\bar{x})e^{-\lambda(x,\bar{x})z}\lambda(dz)$, where $\lambda(x,\bar{x})$ is a positive measurable function of $x$ and $\bar{x}$. We also assume that $\beta(x,\bar{x})$ and $\lambda(x,\bar{x})$ are anti-monotone (the larger $\beta(x,\bar{x})$, the smaller $\lambda(x,\bar{x})$).

Using~\cite{Vallender1973}, we obtain
\begin{align*}
\cW_d(\alpha(x,\bar{x},z)\zeta(dz),\alpha(y,\bar{y},z)\zeta(dz))&\leq \left|\frac{1}{\lambda(x,\bar{x})}-\frac{1}{\lambda(y,\bar{y})}\right|
\end{align*}
We also refer the reader to \cite[Examples~3.8 and~3.9]{LeyReinertEtAl2017}  for the generalization of this result to the canonical regular exponential family and to Gamma distributions respectively. 

We deduce from Lemma~\ref{le:kernel} that, if $\beta(x,\bar{x})\geq \beta(y,\bar{y})$, then
\begin{align*}
J_d^{x,y}(F(x,\bar{x}),F(y,\bar{y}))&\leq \beta(y,\bar{y})\left|\frac{1}{\lambda(x,\bar{x})}-\frac{1}{\lambda(y,\bar{y})}\right|+\frac{\beta(x,\bar{x})-\beta(y,\bar{y})}{\lambda(x,\bar{x})}\\
&= \frac{\beta(x,\bar{x})}{\lambda(x,\bar{x})}-\frac{\beta(y,\bar{y})}{\lambda(y,\bar{y})}.
\end{align*}
We deduce from Theorem~\ref{thm:main} that the coarse Ricci curvature $\sigma$ of the particle system satisfies
\begin{align*}
\sigma\geq -2\left\|\frac{\beta}{\lambda}\right\|_{Lip},
\end{align*}
where $\left\|\frac{\beta}{\lambda}\right\|_{Lip}$ is the Lipschitz norm of the function $\frac{\beta}{\lambda}:(E\times E^N,d)\rightarrow \R_+$.
\end{exa}

\begin{exa}
\label{ex:gaussian-kernel}
We consider a process evolving in $\R^n$ with Gaussian jump measures. More precisely, we assume that $\alpha(x,\bar{x},z)\sigma(dz)$ is the law of a centered Gaussian vector with co-variance matrix $\Sigma(x,\bar{x})$. For simplicity, we assume that the matrices $\Sigma(x,\bar{x})$, $x\in E,\bar{x}\in E^N$ all belong to a same commutative family of matrices.

In this case, the $W_2$-Wasserstein distance between the probability measures $\alpha(x,\bar{x},z)\sigma(dz)$ and $\alpha(y,\bar{y},z)\sigma(dz)$ is bounded {\color{black} from} above (see \cite{GivensShortt1984, KnottSmith1984,Takatsu2010,TakatsuYokota2012} and~\cite{Chafai2010} for a pedagogical account) by
\begin{align*}
\sqrt{\mathrm{Tr}(\Sigma(x,\bar{x})+\Sigma(y,\bar{y})-2(\Sigma(x,\bar{x})^{1/2}\Sigma(y,\bar{y})\Sigma(x,\bar{x})^{1/2})^{1/2})}.
\end{align*}
In particular, since the $W_2$ distance dominates the $W_d$ distance (this is an easy application of H\"older's inequality) and using the commutation of the product $\Sigma(x,\bar{x}) \Sigma(y,\bar{y})$, we deduce from Lemma~\ref{le:kernel} that, if $\beta(x,\bar{x})\geq \beta(y,\bar{y})$, then
\begin{align*}
J_d^{x,y}(F(x,\bar{x}),F(y,\bar{y}))&\leq \beta(y,\bar{y})\left\|\Sigma(x,\bar{x})^{1/2}-\Sigma(y,\bar{y})^{1/2}\right\|_{F}\\
&\quad\quad\quad
+(\beta(x,\bar{x})-\beta(y,\bar{y}))\sqrt{\sum_{i=1}^n (\Sigma(x,\bar{x})_{ii})^2}
\end{align*}
where $\|A\|_{F}=\sqrt{\sum_{i,j=1}^n (A_{ij})^2}$ is the Frobenius norm of a matrix $A\in \R^{n\times n}$.
Hence, Theorem~\ref{thm:main} entails
\begin{align*}
\sigma\geq -2\|\beta\|_\infty \left\|\Sigma^{1/2}\right\|_{Lip}-2\|\beta\|_{Lip}\left\|\sqrt{\sum_{i=1}^n (\Sigma_{ii})^2}\right\|_{\infty},
\end{align*}
where $\|\beta\|_\infty$ and $\|\beta\|_{Lip}$ are respectively the infinite norm and the Lipschitz norm of $\beta:(E\times E^N,d)\rightarrow\R_+$, $ \left\|\Sigma^{1/2}\right\|_{Lip}$ is the Lipschitz norm of $\Sigma^{1/2}:(E\times E^N,d)\rightarrow (\R^{n\times n},\|\cdot\|_{F})$ and $\left\|\sqrt{\sum_{i=1}^n (\Sigma_{ii})^2}\right\|_{\infty}$ is the infinite norm of the function
\begin{align*}
\begin{array}{ccl}
(E\times E^N,d)&\longrightarrow &\R_+\\
(x,\bar{x})&\longmapsto& \sqrt{\sum_{i=1}^n (\Sigma(x,\bar{x})_{ii})^2}.
\end{array}
\end{align*}
\end{exa}

\begin{exa}
\label{ex:discrete-kernel}
Let $E=\Z$ and assume that $\alpha(x,\bar{x},z)\sigma(dz)$ is a discrete, finitely supported probability measure. More precisely, we assume that there exists $n\geq 0$ such that
\begin{align*}
\alpha(x,\bar{x},z)\sigma(dz)=\sum_{k=-n}^n \alpha(x,\bar{x},k)\delta_k(dz),\ \forall x\in E,\,\bar{x}\in E^N.
\end{align*}

The cumulative distribution function of this measure is
\begin{align*}
F_{\alpha(x,\bar{x},z)\sigma(dz)}(t)=\sum_{k=-n}^{\lfloor t\rfloor} \alpha(x,\bar{x},k).
\end{align*}
Hence, using \cite{Vallender1973}, we obtain
\begin{align*}
\cW_d(\alpha(x,\bar{x},z)\zeta(dz),\alpha(y,\bar{y},z)\zeta(dz))&= \sum_{\ell=-n}^{n-1} \left|\sum_{k=-n}^\ell \alpha(x,\bar{x},k)-\alpha(y,\bar{y},k)\right|.
\end{align*}
We deduce from Lemma~\ref{le:kernel} that, if $\beta(x,\bar{x})\geq \beta(y,\bar{y})$, then
\begin{align*}
J_d^{x,y}(F(x,\bar{x}),F(y,\bar{y}))&\leq \beta(y,\bar{y})\sum_{\ell=-n}^{n-1} \left|\sum_{k=-n}^\ell \alpha(x,\bar{x},k)-\alpha(y,\bar{y},k)\right|.
\\
&\quad\quad\quad+(\beta(x,\bar{x})-\beta(y,\bar{y}))\sum_{k=1}^n k\,(\alpha(x,\bar{x},-k)+\alpha(x,\bar{x},k))
\end{align*}
Theorem~\ref{thm:main} implies that
\begin{align*}
\sigma\geq -2\|\beta\|_\infty \|\alpha\|_{Lip}-2\|\beta\|_{Lip}\|M_{\alpha}\|_\infty,
\end{align*}
where $M_\alpha(x,\bar{x})$ is the first absolute moment of $\alpha(x,\bar{x},\cdot)$ and where $\|\alpha\|_{Lip}$ is the Lipschitz norm of the function 
\begin{align*}
\begin{array}{ccl}
\alpha:(E\times E^N,d)&\longrightarrow &\cM(\{-n,\ldots,n\})\\
(x,\bar{x})&\longmapsto& \alpha(x,\bar{x},z)\,\zeta(dz),
\end{array}
\end{align*}
 with $\cM(\{-n,\ldots,n\})$ endowed with the norm $\|\mu\|=\sum_{k=-n}^{n} (n-k) |\mu(k)|$.
\end{exa}


\begin{thebibliography}{10}

\bibitem{AlfonsiCorbettaEtAl2016}
A.~{Alfonsi}, J.~{Corbetta}, and B.~{Jourdain}.
\newblock {Evolution of the Wasserstein distance between the marginals of two
  Markov processes}.
\newblock {\em ArXiv e-prints}, June 2016.
\newblock To appear in Bernoulli Journal.

\bibitem{AsselahFerrariEtAl2011}
A.~Asselah, P.~A. Ferrari, and P.~Groisman.
\newblock Quasistationary distributions and {F}leming-{V}iot processes in
  finite spaces.
\newblock {\em J. Appl. Probab.}, 48(2):322--332, 2011.

\bibitem{Billingsley1999}
P.~Billingsley.
\newblock {\em Convergence of probability measures}.
\newblock Wiley Series in Probability and Statistics: Probability and
  Statistics. John Wiley \& Sons, Inc., New York, second edition, 1999.
\newblock A Wiley-Interscience Publication.

\bibitem{Boudou2006}
A.-S. Boudou, P. Caputo, P. Dai Pra and G. Posta.
\newblock Spectral gap estimates for interacting particle systems via a Bochner-type identity.
\newblock {\em J. Funct. Anal.}, 232(1):222--258, 2006.

\bibitem{BurdzyHolystEtAl1996}
K.~Burdzy, R.~Holyst, D.~Ingerman, and P.~March.
\newblock Configurational transition in a {F}leming-{V}iot-type model and
  probabilistic interpretation of {L}aplacian eigenfunctions.
\newblock {\em J. Phys. A}, 29(29):2633--2642, 1996.

\bibitem{BurdzyHolystEtAl2000}
K.~Burdzy, R.~Ho{\l}yst, and P.~March.
\newblock A {F}leming-{V}iot particle representation of the {D}irichlet
  {L}aplacian.
\newblock {\em Comm. Math. Phys.}, 214(3):679--703, 2000.


\bibitem{CaputoEtAl2009}
P.~Caputo, P.~Dai Pra and G.~Posta.
\newblock Convex entropy decay via the Bochner-Bakry-Emery approach.
\newblock {\em Ann. Inst. H. Poincar\'e Probab. Statist.}, 45(3):734--753,2009.


\bibitem{CattiauxGuillin2014}
P.~Cattiaux and A.~Guillin.
\newblock Semi log-concave {M}arkov diffusions.
\newblock In {\em S\'eminaire de {P}robabilit\'es {XLVI}}, volume 2123 of {\em
  Lecture Notes in Math.}, pages 231--292. Springer, Cham, 2014.

\bibitem{Chafai2010}
D.~Chafa\"\i.
\newblock {W}asserstein distance between two {G}aussians, Apr. 2010.
\newblock
  http://djalil.chafai.net/blog/2010/04/30/wasserstein-distance-between-two-gaussians/.

\bibitem{ChafaiJoulin2013}
D.~Chafa\"{i} and A.~Joulin.
\newblock Intertwining and commutation relations for birth-death processes.
\newblock {\em Bernoulli}, 19(5A):1855--1879, 2013.

\bibitem{ChafaiMalrieuEtAl2010}
D.~Chafa\"\i, F.~Malrieu, and K.~Paroux.
\newblock On the long time behavior of the {TCP} window size process.
\newblock {\em Stochastic Process. Appl.}, 120(8):1518--1534, 2010.

\bibitem{ChampagnatVillemonais2016b}
N.~Champagnat and D.~Villemonais.
\newblock Exponential convergence to quasi-stationary distribution and
  {Q}-process.
\newblock {\em Probab. Theory Related Fields}, 164(1):243--283, 2016.

\bibitem{Chen1994}
M.-F. Chen.
\newblock Optimal {M}arkovian couplings and applications.
\newblock {\em Acta Math. Sinica (N.S.)}, 10(3):260--275, 1994.
\newblock A Chinese summary appears in Acta Math. Sinica {{\bf{3}}8} (1995),
  no. 4, 575.

\bibitem{Chen2004}
M.-F. Chen.
\newblock {\em From {M}arkov chains to non-equilibrium particle systems}.
\newblock World Scientific Publishing Co., Inc., River Edge, NJ, second
  edition, 2004.

\bibitem{Chen2005}
M.-F. Chen.
\newblock {\em Eigenvalues, inequalities, and ergodic theory}.
\newblock Probability and its Applications (New York). Springer-Verlag London,
  Ltd., London, 2005.

\bibitem{Chen2010}
M.-F. Chen.
\newblock Mathscinet review {MR}2484937 for "{R}icci curvature of {M}arkov
  chains on metric spaces" by {Y}ann {O}llivier, 2010.

\bibitem{ChenLi1989}
M.-F. Chen and S.~F. Li.
\newblock Coupling methods for multidimensional diffusion processes.
\newblock {\em Ann. Probab.}, 17(1):151--177, 1989.

\bibitem{Chizat2018}
  L. Chizat, G. Peyr\'e, B. Schmitzer and F.X. Vialard.
  \newblock Unbalanced optimal transport: Dynamic and Kantorovich formulations.
  \newblock {\em J. Funct. Anal.}, 274(11):3090--3123, 2018.



\bibitem{CloezHairer2015}
B.~Cloez and M.~Hairer.
\newblock Exponential ergodicity for {M}arkov processes with random switching.
\newblock {\em Bernoulli}, 21(1):505--536, 2015.

\bibitem{CloezThai2016}
B.~Cloez and M.-N. Thai.
\newblock Quantitative results for the {F}leming-{V}iot particle system and
  quasi-stationary distributions in discrete space.
\newblock {\em Stochastic Process. Appl.}, 126(3):680--702, 2016.

\bibitem{DallAglio1956}
  G.~Dall'Aglio.
  \newblock Sugli estremi dei momenti delle funzioni di ripartizione doppia.
  \newblock {\em Ann. Scuoloa Norm. Sup. Pisa (3)}, 10:35--74, 1956.


\bibitem{Davis1993}
  M. H. A. Davis.
  \newblock {\em Markov models and optimization},
  \newblock Monographs on Statistics and Applied Probability 49, Chapman \& Hall, London, 1993.


\bibitem{DelMoral2013}
P.~Del~Moral.
\newblock {\em Mean field simulation for {M}onte {C}arlo integration}, volume
  126 of {\em Monographs on Statistics and Applied Probability}.
\newblock CRC Press, Boca Raton, FL, 2013.

\bibitem{DjelloutGuillinEtAl2004}
H.~Djellout, A.~Guillin, and L.~Wu.
\newblock Transportation cost-information inequalities and applications to
  random dynamical systems and diffusions.
\newblock {\em Ann. Probab.}, 32(3B):2702--2732, 2004.

\bibitem{Dudley2002}
R.~M. Dudley.
\newblock {\em Real analysis and probability}, volume~74 of {\em Cambridge
  Studies in Advanced Mathematics}.
\newblock Cambridge University Press, Cambridge, 2002.
\newblock Revised reprint of the 1989 original.

\bibitem{DupuisEllisEtAl1991}
P.~Dupuis, R.~S. Ellis, and A.~Weiss.
\newblock Large deviations for {M}arkov processes with discontinuous
  statistics. {I}. {G}eneral upper bounds.
\newblock {\em Ann. Probab.}, 19(3):1280--1297, 1991.

\bibitem{DupuisRamananEtAl2016}
P.~{Dupuis}, K.~{Ramanan}, and W.~{Wu}.
\newblock {Large Deviation Principle For Finite-State Mean Field Interacting
  Particle Systems}.
\newblock {\em ArXiv e-prints}, Jan. 2016.

\bibitem{Eberle2011}
A.~Eberle.
\newblock Reflection coupling and {W}asserstein contractivity without
  convexity.
\newblock {\em C. R. Math. Acad. Sci. Paris}, 349(19-20):1101--1104, 2011.

\bibitem{Eberle2016}
A.~Eberle.
\newblock Reflection couplings and contraction rates for diffusions.
\newblock {\em Probab. Theory Related Fields}, 166(3-4):851--886, 2016.

\bibitem{Fathi2016}
  M. Fathi and J. Maas.
  \newblock Entropic Ricci curvature bounds for discrete interacting systems.
  \newblock {\em Ann. Appl. Probab.}, 26(3):1774--1806, 2016.

\bibitem{GivensShortt1984}
C.~R. Givens and R.~M. Shortt.
\newblock A class of {W}asserstein metrics for probability distributions.
\newblock {\em Michigan Math. J.}, 31(2):231--240, 1984.

\bibitem{GrigorescuKang2004}
I.~Grigorescu and M.~Kang.
\newblock Hydrodynamic limit for a {F}leming-{V}iot type system.
\newblock {\em Stochastic Process. Appl.}, 110(1):111--143, 2004.

\bibitem{Joulin2007}
A.~Joulin.
\newblock Poisson-type deviation inequalities for curved continuous-time
  {M}arkov chains.
\newblock {\em Bernoulli}, 13(3):782--798, 2007.

\bibitem{Joulin2009}
A.~Joulin.
\newblock A new {P}oisson-type deviation inequality for {M}arkov jump processes
  with positive {W}asserstein curvature.
\newblock {\em Bernoulli}, 15(2):532--549, 2009.

\bibitem{JoulinOllivier2010}
A.~Joulin and Y.~Ollivier.
\newblock Curvature, concentration and error estimates for {M}arkov chain
  {M}onte {C}arlo.
\newblock {\em Ann. Probab.}, 38(6):2418--2442, 2010.

\bibitem{KnottSmith1984}
M.~Knott and C.~S. Smith.
\newblock On the optimal mapping of distributions.
\newblock {\em J. Optim. Theory Appl.}, 43(1):39--49, 1984.

\bibitem{Kondratyev2016}
  S. Kondratyev, L. Monsaingeon and D. Vorotnikov.
  \newblock A new optimal transport distance on the space of finite {R}adon measures.
  \newblock {\em Adv. Differential Equations}, 21(11-12):1117--1164, 2016.


\bibitem{LeyReinertEtAl2017}
C.~Ley, G.~Reinert, and Y.~Swan.
\newblock Distances between nested densities and a measure of the impact of the
  prior in {B}ayesian statistics.
  \newblock {\em Ann. Appl. Probab.}, 27(1):216--241, 2017.

\bibitem{Liero2018}
  M. Liero, A. Mielke and G. Savar{\'e}.
  \newblock Optimal Entropy-Transport problems and a new Hellinger--Kantorovich
  distance between positive measures.
  \newblock {\em Inventiones mathematicae}, 211(3):969--1117, 2018.

\bibitem{Majka2015}
M.~B. {Majka}.
\newblock {Coupling and exponential ergodicity for stochastic differential
  equations driven by L\'evy processes}.
\newblock {\em ArXiv e-prints}, Sept. 2015.

\bibitem{Majka2016}
M.~B. {Majka}.
\newblock {Transportation inequalities for non-globally dissipative SDEs with
  jumps via Malliavin calculus and coupling}.
\newblock {\em ArXiv e-prints}, Oct. 2016.


\bibitem{Marton1996}
  K. Marton.
  \newblock {A measure concentration inequality for contracting markov chains}.
  \newblock {\em Geom. Funct. Anal.}, 6(3):556--571, 1996.
  

\bibitem{Marton2015}
K. Marton.
\newblock {Logarithmic Sobolev inequalities in discrete product spaces: a proof by
  a transportation cost distance}.
\newblock {\em ArXiv e-prints}, Jul. 2015.


\bibitem{Mielke2013}
  A. Mielke.
  \newblock Geodesic convexity of the relative entropy in reversible Markov
  chains.
  \newblock {\em Calc. Var. Part. Diff. Equ.}, 48(1):1--31, 2013.


\bibitem{Ollivier2009}
Y.~Ollivier.
\newblock Ricci curvature of {M}arkov chains on metric spaces.
\newblock {\em J. Funct. Anal.}, 256(3):810--864, 2009.

\bibitem{Ollivier2010}
Y.~Ollivier.
\newblock A survey of {R}icci curvature for metric spaces and {M}arkov chains.
\newblock In {\em Probabilistic approach to geometry}, volume~57 of {\em Adv.
  Stud. Pure Math.}, pages 343--381. Math. Soc. Japan, Tokyo, 2010.

\bibitem{PiccoliRossi2016}
B.~Piccoli and F.~Rossi.
\newblock On properties of the generalized {W}asserstein distance.
\newblock {\em Arch. Ration. Mech. Anal.}, 222(3):1339--1365, 2016.

\bibitem{RachevKlebanovEtAl2013}
S.~T. Rachev, L.~B. Klebanov, S.~V. Stoyanov, and F.~J. Fabozzi.
\newblock {\em The methods of distances in the theory of probability and
  statistics}.
\newblock Springer, New York, 2013.

\bibitem{Rockafellar1970}
R.~T. Rockafellar.
\newblock {\em Convex analysis}.
\newblock Princeton Mathematical Series, No. 28. Princeton University Press,
  Princeton, N.J., 1970.

\bibitem{Takatsu2010}
A.~Takatsu.
\newblock On {W}asserstein geometry of {G}aussian measures.
\newblock In {\em Probabilistic approach to geometry}, volume~57 of {\em Adv.
  Stud. Pure Math.}, pages 463--472. Math. Soc. Japan, Tokyo, 2010.

\bibitem{TakatsuYokota2012}
A.~Takatsu and T.~Yokota.
\newblock Cone structure of {$L^2$}-{W}asserstein spaces.
\newblock {\em J. Topol. Anal.}, 4(2):237--253, 2012.

\bibitem{Thai2016}
M.-N. {Thai}.
\newblock {Birth and Death process in mean field type interaction}.
\newblock {\em To appear in Bernoulli Journal}, 2016.

\bibitem{Vallender1973}
S.~S. Vallender.
\newblock Calculations of the {V}asser\v ste\u\i n distance between probability
  distributions on the line.
\newblock {\em Teor. Verojatnost. i Primenen.}, 18:824--827, 1973.

\bibitem{DoornErik1991}
E.~A. van Doorn.
\newblock Quasi-stationary distributions and convergence to quasi-stationarity
  of birth-death processes.
\newblock {\em Adv. Appl. Probab.}, 23(4):683--700, 1991.

\bibitem{Vershik2013}
A.~M. Vershik.
\newblock {L}ong history of the {M}onge-{K}antorovich transportation problem.
\newblock {\em The Mathematical Intelligencer}, 35(4):1--9, 2013.

\bibitem{Veysseire2012}
L.~{Veysseire}.
\newblock {Coarse Ricci curvature for continuous-time Markov processes}.
\newblock {\em ArXiv e-prints}, Feb. 2012.

\bibitem{Veysseire2012a}
L.~Veysseire.
\newblock {\em {Coarse Ricci curvature of Markov processes}}.
\newblock {PhD Thesis}, {\'Ecole normale sup{\'e}rieure de Lyon - ENS LYON},
  July 2012.

\bibitem{Villani2009}
C.~Villani.
\newblock {\em Optimal transport}, volume 338 of {\em Grundlehren der
  Mathematischen Wissenschaften [Fundamental Principles of Mathematical
  Sciences]}.
\newblock Springer-Verlag, Berlin, 2009.
\newblock Old and new.

\bibitem{Villemonais2014}
D.~Villemonais.
\newblock General approximation method for the distribution of {M}arkov
  processes conditioned not to be killed.
\newblock {\em ESAIM Probab. Stat.}, 18:441--467, 2014.

\bibitem{RenesseSturm2005}
M.-K. von Renesse and K.-T. Sturm.
\newblock Transport inequalities, gradient estimates, entropy, and {R}icci
  curvature.
\newblock {\em Comm. Pure Appl. Math.}, 58(7):923--940, 2005.

\bibitem{Wang2011}
F.-Y. Wang.
\newblock Coupling for {O}rnstein-{U}hlenbeck processes with jumps.
\newblock {\em Bernoulli}, 17(4):1136--1158, 2011.

\bibitem{Zhang1999}
S.~Zhang.
\newblock Existence of the optimal measurable coupling and ergodicity for
  {M}arkov processes.
\newblock {\em Sci. China Ser. A}, 42(1):58--67, 1999.

\end{thebibliography}

\end{document}